\DeclarePairedDelimiter{\ceil}{\lceil}{\rceil}
\DeclarePairedDelimiter\floor{\lfloor}{\rfloor}
\newcommand{\lebn}
\theoremstyle{plain}
\newtheorem{proposition}[equation]{Proposition}
\newtheorem{theorem}[equation]{Theorem}
\newtheorem{conjecture}[equation]{Conjecture}
\newtheorem{corollary}[equation]{Corollary}
\newtheorem{lemma}[equation]{Lemma}
\theoremstyle{definition}
\newtheorem{remark}[equation]{Remark}
\numberwithin{equation}{section}
\newcommand{\D}{\Delta}
\tikzset{nodc/.style={circle,draw=blue!50,fill=pink!80,inner sep=1.6pt}}
\tikzset{nodr/.style={circle,draw=black,fill=green!50!black,inner sep=2.5pt}}
\tikzset{nodel/.style={circle,draw=black,inner sep=2.2pt}}
\tikzset{nodinvisible/.style={circle,draw=white,inner sep=2pt}}
\tikzset{nodpale/.style={circle,draw=gray,fill=gray,inner sep=1.6pt}}
\tikzset{nod1/.style={circle,draw=black,fill=black,inner sep=1pt}}
\tikzset{nod2/.style={circle,draw=black,fill=blue!75!black,inner sep=1.6pt}}
\tikzset{nodgs/.style={circle,draw=black,dotted,fill=gray,inner sep=1.6pt}}
\tikzset{nod3/.style={circle,draw=black,fill=black,inner sep=1.8pt}}
\tikzset{noddiam/.style={diamond,draw=black,inner sep=2pt}}
\tikzset{nodw/.style={circle,draw=black,inner sep=1.8pt}}
 \def\@textbottom{\vskip \z@ \@plus 10pt}
 \let\@texttop\relax
\begin{document}

\bibliographystyle{plain}

\title[2-distance coloring of planar graphs with girth five]{On $2$-distance $16$-coloring of planar graphs with maximum degree at most five}  


\author{Zakir Deniz}
\thanks{The author is supported by T\" UB\. ITAK, grant no:122F250}
\address{Department of Mathematics, Duzce University, Duzce, 81620, Turkey.}
\email{zakirdeniz@duzce.edu.tr}

\keywords{Coloring, 2-distance coloring, girth, planar graph.}
\date{\today}
\thanks{}
\subjclass[2010]{}

\begin{abstract}


A vertex coloring of a graph $G$ is called a 2-distance coloring if any two vertices at a distance at most $2$ from each other receive different colors. Suppose that $G$ is a planar graph with a maximum degree at most $5$. We prove that $G$ admits a $2$-distance $16$ coloring, which improves the result given by Hou et al. (Graphs and Combinatorics 39:20, 2023).

\end{abstract}
\maketitle


\section{Introduction}

We consider only finite simple graphs throughout this paper, and refer to \cite{west} for terminology and notation not defined here. Let $G$ be a graph, we use $V(G),E(G),F(G),$ and $\D(G)$ to denote the vertex, edge and face set, and the maximum degree of $G$, respectively. If there is no confusion in the context, we abbreviate $\D(G)$ to $\D$. 
A 2-distance coloring is a proper vertex coloring where two vertices that are adjacent or have a common neighbour receive different colors, and the smallest number of colors for which $G$ admits a 2-distance coloring is known as the 2-distance chromatic number $\chi_2(G)$ of $G$. 
In \cite{daniel}, Cranston has provided a comprehensive survey on 2-distance coloring and related coloring concepts.

In 1977, Wegner \cite{wegner} posed the following conjecture.

\begin{conjecture}\label{conj:main}
For every planar graph $G$, $\chi_2(G) \leq 7$ if $\Delta=3$, $\chi_2(G) \leq \Delta+5$ if $4\leq \Delta\leq 7$, and $\chi_2(G) \leq  \floor[\big]{\frac{3\Delta}{2}}+1$ if $\Delta\geq 8$, 
\end{conjecture}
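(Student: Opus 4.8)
The plan is to proceed by the discharging method applied to a hypothetical minimal counterexample, treating the three ranges of $\Delta$ separately since the target bound changes character across them. Fix one regime and let $k$ denote the corresponding claimed bound, so $k=7$ when $\Delta=3$, $k=\Delta+5$ when $4\le\Delta\le 7$, and $k=\floor{\frac{3\Delta}{2}}+1$ when $\Delta\ge 8$. Suppose $G$ is a planar graph with $\chi_2(G)>k$ that is vertex-minimal with this property for the given $\Delta$. By minimality, every proper subgraph of $G$ admits a $2$-distance $k$-coloring, and the goal is to exhibit a structural obstruction incompatible with planarity.

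First I would assemble a library of \emph{reducible configurations}. The governing principle is that a vertex $v$ can be colored greedily given a $2$-distance $k$-coloring of a suitable subgraph, provided the number of colors forbidden on $v$ by its distance-$2$ neighborhood is strictly less than $k$. Since a vertex of degree $d(v)$ sees at most $d(v)+\sum_{u\sim v}(d(u)-1)$ other vertices within distance two, the simplest reducible configurations are low-degree vertices whose neighbors also have controlled degree; planarity is essential here because $|E(G)|\le 3|V(G)|-6$ forces an abundance of low-degree vertices. More refined configurations arise by uncoloring a small set and recoloring it in a carefully chosen order, trading a hard-to-color vertex against easier ones. The heart of this step is to prove that none of the configurations in the library appears in $G$, which is where minimality is exploited.

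Next I would run the discharging argument. Using Euler's formula in the form $\sum_{v\in V}(d(v)-4)+\sum_{f\in F}(\ell(f)-4)=-8$, I assign each vertex the initial charge $d(v)-4$ and each face the charge $\ell(f)-4$, so the total charge is negative. I would then design rules sending charge from high-degree vertices and large faces to the low-degree vertices and short faces that would otherwise be negative, aiming to show that, in the absence of every reducible configuration, each vertex and each face ends with nonnegative final charge. This contradicts the negative total and rules out any counterexample.

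The hard part, and the reason this remains open for $\Delta\ge 8$, is closing the gap between what discharging can deliver and the tight value $\floor{\frac{3\Delta}{2}}+1$: the extremal graphs realizing this bound are highly structured, so the list of configurations to rule out is large and the discharging rules must be calibrated with essentially no slack. I expect the central obstacle to be producing a family of configurations that is simultaneously \emph{unavoidable}, meaning forced to appear by the charge deficit, and genuinely \emph{reducible}; for large $\Delta$ the known techniques yield only $\tfrac{3}{2}\Delta(1+o(1))$ rather than the exact additive constant, so any complete proof would require a substantially new reducibility idea beyond the greedy and recoloring arguments sketched above.
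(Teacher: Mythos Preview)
The statement you are addressing is Wegner's \emph{conjecture}, not a theorem; the paper does not prove it, and indeed notes explicitly that it remains open even for small $\Delta$. What the paper actually proves is the weaker statement $\chi_2(G)\le 16$ for planar $G$ with $\Delta\le 5$ (Theorem~\ref{thm:main}), which is still well above the conjectured bound $\Delta+5=10$.

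Your proposal is not a proof but a description of the discharging framework, and you concede as much in your final paragraph: the method as you outline it yields at best $\tfrac{3}{2}\Delta(1+o(1))$ for large $\Delta$ and does not reach the exact constants in any of the three regimes. Concretely, the step ``assemble a library of reducible configurations'' and ``design rules sending charge'' is where the entire content of any such argument lives; without specifying those configurations and rules and verifying reducibility and nonnegativity case by case, nothing has been proved. Since no such library is known to exist for the conjectured bounds (except $\Delta=3$, settled by Thomassen and independently Hartke--Jahanbekam--Thomas), the proposal cannot be completed along the lines you sketch with current techniques.
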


Thomassen \cite{thomassen} (independently by Hartke et al. \cite{hartke}) proved the conjecture for planar graphs with $\Delta = 3$, by showing that every cubic planar graph is 2-distance colorable with $7$ colors. For general planar graphs,  van den Heuvel and McGuinness \cite{van-den} showed that $\chi_2(G) \leq 2\Delta + 25$, while the bound $\chi_2(G) \leq \ceil[\big]{ \frac{5\D}{3}}+78$ was proved by Molloy and Salavatipour \cite{molloy}.\medskip






The conjecture remains open, even for small $\D$. Zhu \cite{zhu} proved that $\chi_2(G)\leq 13$ for planar graph with $\D =4$. On the other hand,  Zhu and Bu \cite{zhu2} showed that  $\chi_2(G)\leq 20$ for planar graph with maximum degree $\D \leq 5$. This bound was later reduced to $19$ by Chen et al. \cite{chen}, and to 18 by Hou et al. \cite{hou}. Recently, Aoki \cite{aoki} and Zou et al. \cite{zou} (independently) in unpublished papers have proved that $\chi_2(G)\leq 17$.    In this paper, we further improve the current bound by reducing it by one.

\begin{theorem}\label{thm:main}
If $G$ is a planar graph with maximum degree at most $5$, then $\chi_2(G) \leq 16$. 
\end{theorem}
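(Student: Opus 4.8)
The plan is to argue by contradiction using the discharging method. Suppose the theorem fails and let $G$ be a counterexample minimizing $|V(G)|+|E(G)|$; then $G$ is connected, and we fix a plane embedding of it. By minimality every planar graph with maximum degree at most $5$ and strictly fewer vertices-plus-edges admits a $2$-distance $16$-coloring, so any graph obtained from a proper part of $G$ by vertex deletions — together with edge additions that keep it planar with $\Delta\le 5$ — receives such a coloring. The whole strategy is to show that $G$ must then contain local structure forcing $G$ itself to be colorable, a contradiction.

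First I would build up a list of \emph{reducible configurations} that $G$ cannot contain. The basic counting fact is that a vertex $v$ sees at most $d(v)+\sum_{u\in N(v)}(d(u)-1)\le 5\,d(v)$ vertices within distance $2$, so with $16$ colors available a vertex with at most $15$ such constrained vertices is trivially colorable once the rest of $G$ is colored. In particular vertices of degree at most $2$ are forbidden: delete such a vertex and, if its two neighbors are non-adjacent, add an edge between them — this preserves planarity, and since each endpoint loses one incident edge and gains one, it preserves $\Delta\le 5$. One then pushes further to control $3$-vertices and, with more effort, the neighborhoods of $4$-vertices, of $5$-vertices having several $4$-neighbors, and the faces around them: bounding how many low-degree vertices lie on a triangle, forbidding certain adjacent pairs of low-degree vertices, and restricting clusters of triangles that share edges. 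Each such lemma is proved by deleting or uncoloring a bounded set of vertices, coloring the remainder by minimality, and verifying that the freed vertices see fewer than $16$ colors so that a greedy completion — possibly after a short Kempe-type recoloring exchange — succeeds. A recurring technical nuisance is that deleting a vertex can push the distance between two of its former neighbors above $2$, so a coloring of the smaller graph need not honor all constraints of $G$; this is exactly why the measure is $|V(G)|+|E(G)|$ and why, where the degree bound allows, one reduces to graphs obtained by a vertex deletion \emph{together with} suitable edge additions.

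For the discharging, assign to every vertex and face the charge $\mu(x)=d(x)-4$, where $d(f)$ denotes the length of a face $f$. Since $\sum_v d(v)=\sum_f d(f)=2|E(G)|$ and $|V(G)|-|E(G)|+|F(G)|=2$, the total charge is $\sum_x\mu(x)=4|E(G)|-4(|V(G)|+|F(G)|)=4|E(G)|-4(|E(G)|+2)=-8<0$. The only elements with negative initial charge are triangular faces (charge $-1$) and $3$-vertices (charge $-1$), while $5$-vertices carry $+1$ and faces of length $\ell$ carry $\ell-4\ge 1$. I would then design discharging rules sending charge from $5$-vertices and from long faces toward the negative elements — for instance a $5$-vertex contributing a fixed fraction to each incident triangle and something to $3$-vertices within distance $1$ or $2$ — with the amounts tuned so that the reducible configurations guarantee every negative element is surrounded by enough positive charge. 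One then checks that after discharging every vertex and face is non-negative, contradicting $\sum_x\mu(x)=-8$.

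The step I expect to be the main obstacle is making the reducibility and the discharging close simultaneously: one must prove \emph{enough} reducible configurations — especially about $4$-vertices, triangles incident only to low-degree vertices, and adjacent triangles — that the final charge actually balances with no residual deficit. With only $16$ colors against the roughly $20$ vertices a $4$-vertex can see within distance $2$, these reductions are not one-line counting arguments but require careful, somewhat delicate recoloring; assembling a configuration list and a discharging scheme that fit together exactly is the heart of the proof.
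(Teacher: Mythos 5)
Your outline matches the paper's strategy exactly --- minimal counterexample with respect to $|V(G)|+|E(G)|$, reducible configurations proved by deleting a vertex and adding edges so that distance-two constraints are preserved (the paper calls such a graph ``proper'' with respect to $G$), the charge $\mu(x)=d(x)-4$ with total $-8$, and rules pushing charge from $5$-vertices and $5^+$-faces to triangles, $3$-vertices and $4$-vertices. But as written it is a plan, not a proof, and the gap is precisely the part you yourself flag at the end: you never exhibit the actual list of reducible configurations, never state the discharging rules with explicit amounts, and never carry out the verification that every vertex and face ends with nonnegative charge. In this problem those steps are the entire content. The paper needs roughly fifteen structural lemmas (e.g.\ $\delta(G)\ge 3$; a $3$-vertex has no $4^-$-neighbour, lies on no $3$-face and on at most one $4$-face; a $4$-vertex has $m_3(v)\le 2$ with strong side conditions when equality holds; detailed constraints on $5$-vertices according to $n_3(v)$ and $m_3(v)$), and on top of these it must introduce and analyze special classes of $5$-vertices (bad, semi-bad, strong, good, support) whose mutual adjacencies drive the final rules R8--R10. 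Without something of this precision one cannot even write down rules ``tuned so that'' the charges balance, because a semi-bad $5$-vertex incident to four triangles has deficit $4\cdot\frac13-1-\frac15$ after the face rules and must recover it from specific neighbours whose existence is exactly what the structural lemmas guarantee; asserting that suitable rules exist is assuming the conclusion.

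Two smaller points. First, your generic counting remark (a vertex sees at most $5\,d(v)$ vertices within distance $2$) gives $d_2(v)\le 15$ only for $3^-$-vertices; for $4$- and $5$-vertices every single reduction in the paper has to manufacture $d_2(v)\le 15$ from the presence of triangles, $4$-faces, or repeated common neighbours, and simultaneously choose the added edges so that the modified graph stays planar, keeps maximum degree at most $5$, and preserves all distance-two pairs --- this is where most of the actual work lies, and ``possibly after a short Kempe-type recoloring exchange'' is not something the paper ever needs or that you could invoke without an argument. Second, your reduction of a $2$-vertex is fine, but already for a $3$-vertex the added edges raise the degree of one neighbour by one, so the claim that the modified graph still satisfies the hypothesis $\Delta\le 5$ requires the neighbour to be a $4^-$-vertex; such checks have to accompany every configuration, and they interact with which edges you are allowed to add. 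In short, the approach is the right one and identical in spirit to the paper's, but the proposal stops before the point at which the proof begins.
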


Given a planar  graph $G$, 
we denote by $\ell(f)$ the length of a face $f$ and by $d(v)$ the degree of a vertex $v$. 
A $k$-vertex is a vertex of degree $k$. A $k^{-}$-vertex is a vertex of degree at most $k$ while a $k^{+}$-vertex is a vertex of degree at least $k$. A $k$ ($k^-$ or $k^+$)-face is defined similarly. A vertex $u\in N(v)$ is called $k$-neighbour (resp. $k^-$-neighbour, $k^+$-neighbour) of $v$ if $d(u)=k$ (resp. $d(u)\leq k$, $d(u)\geq k$). 
For a vertex $v\in V(G)$, we use $n_i(v)$ to denote the number of $i$-vertices adjacent to $v$. 
We denote by $d(u,v)$ the distance between $u$ and $v$ for any pair $u,v\in V(G)$. Also, we set $N_i(v)=\{u\in V(G) \ | \  1 \leq d(u,v) \leq i \}$ for $i\geq 1$, so $N_1(v)=N(v)$ and let $d_2(v);=|N_2(v)|$.

For $v \in V(G)$, we use $m_k(v)$ to denote the number of $k$-faces incident with $v$.
An $(x_1,x_2,\ldots,x_k)$-face is a $k$-face with vertices of degrees $x_1,x_2,\ldots,x_k$.
Two faces $f_1 $ and $f_2$ are called adjacent if they share a common edge.



\section{The Proof of Theorem \ref{thm:main}} \label{sec:premECE}

\subsection{The Structure of Minimum Counterexample} \label{sub:premECE}~~\medskip

Let  $G$ be a minimal counterexample to Theorem \ref{thm:main} such that $|V(G)|+|E(G)|$ is minimum. So, $G$  does not admit any $2$-distance $16$-coloring, but 
any planar graph $G'$ obtained from $G$ with smaller $|V(G')|+|E(G')|$ admits a 2-distance 16-coloring.
Obviously,  $G$ is a connected graph. 


We call a graph $H$ \emph{proper} with respect to $G$ if $H$ is obtained from $G$ by deleting some edges or vertices and adding some edges, ensuring that for every pair of vertices $x_1$ and $x_2$ in $V(G) \cap V(H)$ having distance at most 2 in $G$, they also have distance at most $2$ in $H$.
If $f$ is a 2-distance coloring of such a graph $H$, then $f$ can be extended to the whole graph $G$, provided that each of the remaining uncolored vertices has an available color.\medskip



First, we present some structural results and forbidden configurations for the graph $G$. The proofs of the following lemmas are quite similar: Given a vertex $v\in V(G)$, we construct a graph $H$ consisting of the vertices $V(G-v)$ such that $H$ is proper with respect to $G$. When we have a $2$-distance $16$-coloring of $H$, we show that such a coloring can always be extended to the whole graph $G$ by assigning a color to the removed vertex $v$ when $d_2(v)\leq 15$.


\begin{lemma}\label{lem:deg-geq-3}
For the graph $G$, we have $\delta(G)\geq 3$. 
\end{lemma}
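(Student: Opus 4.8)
The plan is to proceed by the standard minimal-counterexample/discharging setup announced in the paragraph preceding the lemma. Suppose for contradiction that $G$ has a vertex $v$ with $d(v)=d_G(v)\leq 2$. I would form the graph $H=G-v$. Since deleting a single vertex can only decrease distances and $|V(H)|+|E(H)|<|V(G)|+|E(G)|$, the graph $H$ is planar, proper with respect to $G$, and by minimality of $G$ it admits a $2$-distance $16$-coloring $f$. The only uncolored vertex is $v$, so to extend $f$ to $G$ it suffices to exhibit an available color for $v$.

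The key estimate is on $d_2(v)=|N_2(v)|$, the number of vertices that must receive a color different from $f(v)$. Each neighbour $u\in N(v)$ has degree at most $\D\leq 5$ in $G$, hence at most $4$ neighbours other than $v$; therefore $d_2(v)\leq d(v)+d(v)(\D-1)=d(v)\cdot\D\leq 2\cdot 5=10$. In fact one can be a little more careful (the $d(v)-1\leq 1$ other neighbours of $v$ are among these counted vertices), but the crude bound $d_2(v)\leq 10<15$ already suffices. Since the palette has $16$ colors and at most $10$ of them are forbidden on $v$, there is a free color; assigning it yields a $2$-distance $16$-coloring of $G$, contradicting the choice of $G$. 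Hence $\delta(G)\geq 3$.

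I do not anticipate a genuine obstacle here: this is the base case of the structural analysis and the numerics are very loose ($10\ll 15<16$). The only points requiring a line of care are (i) checking that $H=G-v$ is genuinely proper with respect to $G$ — any two vertices $x_1,x_2\in V(G)\cap V(H)=V(G)\setminus\{v\}$ at distance $\leq 2$ in $G$ along a path avoiding $v$ are still at distance $\leq 2$ in $H$, and if every such short path used $v$ then $x_1,x_2\in N(v)$, so they are adjacent in $G$; to be safe one may add the edge $x_1x_2$ for every pair of neighbours of $v$, which keeps $H$ planar because these edges can be drawn inside the faces around $v$ — and (ii) noting that $G$ connected together with $\delta(G)\geq 3$ will be used repeatedly later. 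With these remarks in place the proof is complete.
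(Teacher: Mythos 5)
Your proposal follows the same route as the paper: delete the low-degree vertex, invoke minimality to $2$-distance $16$-color the smaller graph, and extend using the crude bound $d_2(v)\leq 10<16$. One point needs correcting, though: your justification that $H=G-v$ is already proper is wrong where you write that two neighbours $x_1,x_2$ of $v$ whose only short path runs through $v$ ``are adjacent in $G$'' --- neighbours of a $2$-vertex need not be adjacent, and if they are not, then in $G-v$ they may lie at distance greater than $2$, receive the same color, and then no choice of color for $v$ repairs the conflict (they are at distance $2$ in $G$ via $v$). So the step you present as optional (``to be safe one may add the edge $x_1x_2$'') is in fact mandatory; this is precisely what the paper does by taking $G'=G-v+xy$ for a $2$-vertex $v$ with neighbours $x,y$ (planarity is preserved since $xy$ can be drawn along the path $xvy$, and $|V(G')|+|E(G')|$ still drops). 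With the edge addition treated as part of the construction rather than an afterthought, your argument coincides with the paper's proof.
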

\begin{proof}
Assume that $v$ is a $1$-vertex in $G$. By minimality, the graph $G-v$ has a 2-distance coloring $f$ with $16$ colors. Since $v$ has an available color, the coloring $f$ can be extended to the whole graph $G$, a contradiction.

We next assume that $v$ is a $2$-vertex in $G$. Denote by $x$ and $y$ the neighbours of $v$, and let $G'=G-v+xy$. By minimality, the graph $G'$ has a 2-distance coloring $f$ with $16$ colors. Notice that every vertex in $V(G)$, except $v$, is colored by the same color appears on the same vertex in $G'$. Thus, it suffices to color only $v$. Since $d_2(v)\leq 10$, i.e., $v$ has at most $10$ forbidden colors, we can color $v$ with an available color, a contradiction. Hence, $G$ has no $1$-vertex and $2$-vertex.
\end{proof}

\begin{lemma}\label{lem:3-vertex-no-3-4-neighbour}
A $3$-vertex cannot be adjacent to any $4^-$-vertex. 
\end{lemma}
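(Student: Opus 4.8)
The plan is to use the minimality of $G$, following the template described just before Lemma~\ref{lem:deg-geq-3}. Assume for contradiction that a $3$-vertex $v$ is adjacent to a $4^{-}$-vertex, and write $N(v)=\{u,w_1,w_2\}$ with $d(u)\leq 4$. I would let $H$ be obtained from $G$ by deleting $v$ and then adding, for each $i\in\{1,2\}$, the edge $uw_i$ whenever it is not already present. This $H$ is planar, since the new edges can be drawn close to the removed edges $uv,vw_1,vw_2$, inside the face created by deleting $v$. It satisfies $\Delta(H)\leq 5$: indeed the degree of $u$ in $H$ is at most $d(u)-1+2\leq 5$ because $d(u)\leq 4$, while each $w_i$ loses the edge $vw_i$ and gains at most one edge, so its degree does not increase, and all other degrees are unchanged. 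Also $|V(H)|+|E(H)|<|V(G)|+|E(G)|$, since one vertex and three edges are removed and at most two edges added. Finally $H$ is proper with respect to $G$: deleting $v$ can only increase the distance between two vertices both belonging to $N(v)$, and in $H$ the vertex $u$ is adjacent to each $w_i$ (originally or via an added edge), so any two of $u,w_1,w_2$ are at distance at most $2$ in $H$.

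By minimality, $H$ has a $2$-distance $16$-coloring $f$; as explained before Lemma~\ref{lem:deg-geq-3}, since $H$ is proper this $f$ extends to a $2$-distance $16$-coloring of $G$ as soon as the remaining uncolored vertex $v$ has an available color, i.e. provided $d_2(v)\leq 15$. Using $N_2(v)\subseteq N(v)\cup\bigcup_{x\in N(v)}\bigl(N(x)\setminus\{v\}\bigr)$,
\[
d_2(v)\leq 3+\bigl(d(u)-1\bigr)+\bigl(d(w_1)-1\bigr)+\bigl(d(w_2)-1\bigr)\leq 3+3+4+4=14<16 .
\]
Hence $v$ has an available color, $f$ extends to all of $G$, and this contradicts the choice of $G$ as a counterexample, proving the lemma.

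The only step that needs care is the construction of $H$: the distance-$2$ requirements inside $N(v)$ that are destroyed by removing $v$ must be restored by added edges, but without violating planarity or pushing a degree above $5$. This is precisely where the hypothesis that $v$ has a $4^{-}$-neighbour is used — routing both repair edges through the low-degree neighbour $u$ keeps $\Delta(H)\leq 5$, whereas naively completing a triangle on $\{u,w_1,w_2\}$ could give some $w_i$ degree $6$. Everything else, namely the size inequality and the estimate $d_2(v)\leq 14$, is routine bookkeeping.
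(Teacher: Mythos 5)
Your proof is correct and follows essentially the same route as the paper: delete $v$, add the two edges from the $4^-$-neighbour to the other two neighbours of $v$, and color $v$ using the bound $d_2(v)\leq 14$. Your additional explicit checks of planarity, the degree bound $\Delta(H)\leq 5$, and the size decrease are details the paper leaves implicit, but they match its intent exactly.
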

\begin{proof}
Let $v$ be a $3$-vertex with $N(v)=\{v_1,v_2,v_3\}$. Assume for a contradiction that $v_1$ is a $4^-$-vertex. Let $G'=G-v+\{v_1v_2,v_1v_3\}$. By minimality, the graph $G'$ has a 2-distance coloring $f$ with $16$ colors. Observe that every vertex in $V(G)$, except $v$, is colored with the same color that appears on the same vertex in $G'$. Namely, $G'$ is proper with respect to $G$. Since $d(v_1)+d(v_2)+d(v_3)\leq 14$,   $v$ has at most $14$ forbidden colors, and so we can color $v$ with an available color, a contradiction.
\end{proof}

The following is an immediate consequence of Lemma \ref{lem:3-vertex-no-3-4-neighbour}.

\begin{corollary}\label{cor:vertices-on-5-face}
Let $f$ be a $5$-face in $G$. Then,
\begin{itemize}
\item [$(a)$] there are at most two $3$-vertices on $f$,
\item [$(b)$] if $f$ is incident to two $3$-vertices, then the other vertices on $f$ are $5$-vertices,
\item [$(c)$] if $f$ is incident to exactly one $3$-vertex, then $f$ is incident to at most two $4$-vertices.
\end{itemize}
\end{corollary}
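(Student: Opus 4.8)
The plan is to deduce Corollary \ref{cor:vertices-on-5-face} entirely from Lemma \ref{lem:3-vertex-no-3-4-neighbour}, using nothing more than the fact that consecutive vertices on a face are adjacent. Fix a $5$-face $f$ with vertices $u_1,u_2,u_3,u_4,u_5$ listed in cyclic order, so that $u_iu_{i+1}\in E(G)$ (indices mod $5$). The whole argument is a case analysis on which of the $u_i$ are $3$-vertices, and in each case we invoke Lemma \ref{lem:3-vertex-no-3-4-neighbour}, which says a $3$-vertex has only $5$-neighbours (its two neighbours on $f$ being among them).

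For part $(a)$: suppose for contradiction that $f$ carries three $3$-vertices. Among five vertices arranged in a cycle, any three of them must include two that are consecutive on the cycle (since the complement of a $3$-set in $C_5$ is a $2$-set, which cannot be a dominating-type independent configuration — more concretely, $C_5$ has independence number $2$). Hence two adjacent vertices on $f$ are both $3$-vertices, contradicting Lemma \ref{lem:3-vertex-no-3-4-neighbour}. So $f$ has at most two $3$-vertices.

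For part $(b)$: if $f$ has exactly two $3$-vertices, they are non-adjacent by Lemma \ref{lem:3-vertex-no-3-4-neighbour}, so up to rotation they are $u_1$ and $u_3$. Then $u_2$ is adjacent to both $u_1$ and $u_3$, and $u_4,u_5$ are each adjacent to one of them ($u_4$ to $u_3$, $u_5$ to $u_1$). By Lemma \ref{lem:3-vertex-no-3-4-neighbour} every neighbour of a $3$-vertex is a $5$-vertex, so $u_2,u_4,u_5$ are all $5$-vertices, which is exactly the claim. For part $(c)$: if $f$ has exactly one $3$-vertex, say $u_1$, then its two face-neighbours $u_2$ and $u_5$ are $5$-vertices by Lemma \ref{lem:3-vertex-no-3-4-neighbour}; the only remaining vertices are $u_3$ and $u_4$, so at most two vertices on $f$ can be $4$-vertices.

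There is essentially no obstacle here — the statement is a direct corollary, and the only thing to be careful about is the purely combinatorial fact used in $(a)$, namely that three vertices on a $5$-cycle always contain an adjacent pair; this is immediate since $\alpha(C_5)=2$. I would simply spell out the three cases as above.
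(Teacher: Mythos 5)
Your proposal is correct and follows exactly the route the paper intends: the paper gives no explicit proof, stating the corollary as an immediate consequence of Lemma \ref{lem:3-vertex-no-3-4-neighbour}, and your case analysis (adjacent pair among any three vertices of a $5$-cycle for $(a)$, and the face-neighbours of the $3$-vertices forced to be $5$-vertices for $(b)$ and $(c)$) is precisely that implicit argument spelled out.
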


\begin{corollary}\label{cor:number-of-3-vert}
A $5^+$-face $f$ is incident to at most  $ \floor[\big]{\frac{\ell(f)}{2}}$  $3$-vertices.
\end{corollary}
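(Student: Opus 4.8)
The statement to prove is Corollary~\ref{cor:number-of-3-vert}: a $5^+$-face $f$ is incident to at most $\floor{\ell(f)/2}$ $3$-vertices.

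The plan is to deduce this directly from Lemma~\ref{lem:3-vertex-no-3-4-neighbour}, which says no two $3$-vertices can be adjacent (since a $3$-vertex cannot be adjacent to a $4^-$-vertex, in particular not to another $3$-vertex). First I would fix a $5^+$-face $f$ and list its incident vertices in cyclic order as $u_1, u_2, \ldots, u_{\ell}$ where $\ell = \ell(f)$, with consecutive indices (mod $\ell$) being consecutive along the boundary, so that $u_i u_{i+1} \in E(G)$ for every $i$. By Lemma~\ref{lem:3-vertex-no-3-4-neighbour}, no two consecutive vertices $u_i, u_{i+1}$ on the boundary can both be $3$-vertices. Hence the set of indices $i$ for which $u_i$ is a $3$-vertex forms an independent set in the cycle $C_\ell$ on these indices, and it is a standard fact that the maximum size of an independent set in $C_\ell$ is $\floor{\ell/2}$. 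This gives the bound.

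One small subtlety to address is that the boundary walk of a face in a plane graph need not be a simple cycle if the graph has cut vertices or bridges; a vertex could appear more than once on the boundary walk of $f$. However, by Lemma~\ref{lem:deg-geq-3} we have $\delta(G) \geq 3$, and combined with $G$ being a minimal counterexample (hence $2$-connected, which should be established or is immediate from the structural lemmas), the boundary of each face is a simple cycle, so the cyclic listing above is legitimate. If one wants to avoid invoking $2$-connectivity, the argument still works at the level of the boundary walk: consecutive vertices in the walk are adjacent, so consecutive entries cannot both be $3$-vertices, and counting distinct $3$-vertices only decreases the count, so the bound $\floor{\ell(f)/2}$ still holds a fortiori.

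The only real ``obstacle'' here is purely bookkeeping — making sure the adjacency of consecutive boundary vertices is used correctly and that the independence-number-of-a-cycle fact is applied with the right parity; there is no deep content beyond Lemma~\ref{lem:3-vertex-no-3-4-neighbour}. I would present it as a two- or three-line proof: cyclically order the vertices of $f$, note consecutive ones cannot both be $3$-vertices by Lemma~\ref{lem:3-vertex-no-3-4-neighbour}, and conclude via the independence number $\floor{\ell(f)/2}$ of the cycle $C_{\ell(f)}$.
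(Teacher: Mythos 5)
Your proof is correct and is exactly the argument the paper intends: the corollary is left as an immediate consequence of Lemma~\ref{lem:3-vertex-no-3-4-neighbour}, namely that no two consecutive vertices on the boundary of $f$ can both be $3$-vertices, giving the bound $\floor[\big]{\frac{\ell(f)}{2}}$. Your extra care about the boundary walk possibly not being a simple cycle is a fine (and harmless) refinement of the same approach.
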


We will now show that a $3$-face does not contain any $3$-vertices, which will be useful throughout the paper.

\begin{lemma}\label{lem:faces-in-3-face}
If $v$ is a $3$-vertex, then $m_3(v)=0$ and  $m_4(v)\leq 1$. 
\end{lemma}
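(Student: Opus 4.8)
The plan is to apply the deletion-and-extension strategy of the previous lemmas to each of the two assertions. In both cases I argue by contradiction. Let $v$ be a $3$-vertex with $N(v)=\{v_1,v_2,v_3\}$; by Lemma \ref{lem:3-vertex-no-3-4-neighbour} every $v_i$ is a $5$-vertex. I delete $v$, add at most one edge among $v_1,v_2,v_3$ so that the resulting graph $H$ is proper with respect to $G$, use minimality of $G$ to obtain a $2$-distance $16$-colouring of $H$, and extend it to $v$ using the bound $d_2(v)\le 15$.

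For $m_3(v)=0$, suppose $v$ lies on a $3$-face; after relabelling, $v_1v_2\in E(G)$. Let $H$ be obtained from $G-v$ by adding the edge $v_1v_3$ if it is not already present. Since deleting the degree-$3$ vertex $v$ merges the faces incident with $v$ into a single face whose boundary carries all of $v_1,v_2,v_3$, the graph $H$ is planar and simple, and obviously $|V(H)|+|E(H)|<|V(G)|+|E(G)|$. Moreover $H$ is proper: the only pairs of surviving vertices whose every length-$\le 2$ walk in $G$ passes through $v$ lie in $\{v_1,v_2,v_3\}$, and in $H$ we have $v_1v_2,v_1v_3\in E(H)$ together with the path $v_2v_1v_3$ of length $2$. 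By minimality $H$ has a $2$-distance $16$-colouring, which colours every vertex of $G$ except $v$; and since $v_1v_2\in E(G)$ makes $v_1$ and $v_2$ double-counted in the estimate $d_2(v)\le |N(v)|+\sum_{i}(d(v_i)-1)=3+3\cdot 4$, we actually get $d_2(v)\le 13$. Hence $v$ has an available colour and the colouring extends to $G$, a contradiction.

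The argument for $m_4(v)\le 1$ runs identically, with the $3$-face replaced by two $4$-faces. If $v$ is incident with two $4$-faces, then in the rotation around $v$ they can be written $(v,v_1,a,v_2)$ and $(v,v_2,b,v_3)$, so $v_1a,av_2,v_2b,bv_3\in E(G)$. Take $H=G-v$ with the edge $v_1v_3$ added if it is absent; again $H$ is planar, simple and strictly smaller, and it is proper because $v_1av_2$ and $v_2bv_3$ are paths of length $2$ already present in $G-v$ while $v_1v_3\in E(H)$. A $2$-distance $16$-colouring of $H$ exists by minimality, and since $a\in N(v_1)\cap N(v_2)$ and $b\in N(v_2)\cap N(v_3)$ are double-counted, $d_2(v)\le 13$; so the colouring again extends to $v$, a contradiction.

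The only work beyond this is checking a few degeneracies: the edge $v_1v_3$ may already be present (then nothing is added and $H$ only shrinks); $a$ and $b$ may coincide with each other or with one of the $v_i$ (then $d_2(v)$ only drops further); and if $v$ is a cut-vertex the phrase ``the faces around $v$'' needs a brief justification, though the merged-face picture still holds for connected plane graphs. I expect this routine bookkeeping, not any conceptual point, to be the main (and minor) obstacle.
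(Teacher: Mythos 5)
Your proof is correct and follows essentially the same approach as the paper's: delete $v$, add a single edge between two neighbours of $v$ (the paper uses $v_2v_3$ in the first case and $v_1v_3$ in the second) so that the resulting graph is proper with respect to $G$, invoke minimality to get a $2$-distance $16$-colouring, and extend it to $v$ via the count $d_2(v)\le 13$. The degeneracies you flag (planarity of the added edge, possible coincidences of $a$, $b$ with the $v_i$) only decrease $d_2(v)$ and do not affect the argument.
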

\begin{proof}
Let $v$ be a $3$-vertex in the graph $G$, and let $v_1,v_2,v_3$ be the neighbours of $v$. 
Assume that $v$ is incident to a $3$-face $f_1$, say $f_1=v_1vv_2$.  Let $G'=G-v+v_2v_3$.  By minimality, the graph $G'$ has a 2-distance coloring $f$ with $16$ colors. Observe that every vertex in $V(G)$, except $v$, is colored by the same color appears on the same vertex in $G'$. Since $d_2(v)\leq 13$, we can color $v$ with an available color, a contradiction. Hence, $m_3(v)=0$.

We next assume that $v$ is incident to at least two $4$-faces  $f_1,f_2$, say $f_1=v_1vv_2x$ and $f_2=v_2vv_3y$ for $x,y\in N(v_2)$. Let $G'=G-v+v_1v_3$ (if $v_1v_3\notin E(G)$). Then $G'$ is proper with respect to $G$. Similarly as above, we get a contradiction since $d_2(v)\leq 13$. Thus, $m_4(v)\leq 1$. 
%
\end{proof}

%
%
%
%
%
%
%
%

We now turn our attention into the possible faces incident to a $4$-vertex $v$, where we assume that $v_1,v_2,v_3,v_4$ are the neighbours of $v$ in a clockwise order.

\begin{lemma}\label{lem:4-vertex-has-no-two-3-face}
Let $v$ be a $4$-vertex. Then $m_3(v)\leq 2$. In particular, if  $m_3(v)=2$, then  $m_4(v)=0$ and $n_5(v)=4$.
\end{lemma}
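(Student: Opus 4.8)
The plan is to mimic the discharging-style reduction used in the preceding lemmas: delete the $4$-vertex $v$, add a small number of edges among its neighbours to record the distance-$2$ constraints, invoke minimality of $G$ to $2$-distance $16$-color the resulting graph $H$, and then show $v$ has a free color because $d_2(v)\le 15$. The bookkeeping is: $v$ contributes its $4$ neighbours, and each neighbour $v_i$ of degree $d(v_i)$ contributes at most $d(v_i)-1$ further vertices to $N_2(v)$, but faces incident with $v$ force some of these second-neighbourhoods to overlap, so $3$-faces are the mechanism that shrinks $d_2(v)$.

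First I would prove $m_3(v)\le 2$. Suppose $m_3(v)=3$; then three consecutive faces around $v$ are triangles, say $vv_1v_2$, $vv_2v_3$, $vv_3v_4$. Build $H$ from $G-v$ by adding the edge $v_1v_4$ (if absent) so that $H$ is proper with respect to $G$ (the pairs $v_1v_2,v_2v_3,v_3v_4$ are already at distance $1$, and $v_1v_4$ is added). Counting $N_2(v)$: we have $v_1,v_2,v_3,v_4$ themselves, plus the $\le d(v_i)-2$ "new" neighbours of each $v_i$ — the $-2$ rather than $-1$ because each $v_i$ on a triangle already has two of its neighbours among $\{v,v_{i-1},v_{i+1}\}$. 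With all $d(v_i)\le 5$ this yields $d_2(v)\le 4+4\cdot 3 - (\text{overlaps at the two interior triangle tips})$, which is comfortably $\le 15$, giving the contradiction. (If $m_3(v)=4$ a similar or stronger count applies, so $m_3(v)\le 2$.)

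Next, assume $m_3(v)=2$ and argue $m_4(v)=0$ and $n_5(v)=4$. If instead $v$ were incident to a $4$-face in addition to its two $3$-faces, or had a $4^-$-neighbour, I would again delete $v$, add one or two edges among $v_1,\dots,v_4$ to keep $H$ proper (choosing which edges depending on the cyclic arrangement of the two triangles — adjacent triangles versus opposite triangles give slightly different configurations), and recount $d_2(v)$. Two incident $3$-faces already pull the count down by at least $2$ (shared tip vertices), and either an incident $4$-face (contributing another coincidence among second neighbours) or a neighbour of degree $\le 4$ (contributing at most $3$ instead of $4$) shaves off one more, pushing $d_2(v)$ to $\le 15$ and producing a $2$-distance $16$-coloring of $G$, a contradiction. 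Hence $m_4(v)=0$ and every neighbour of $v$ is a $5$-vertex.

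The main obstacle I anticipate is the careful case analysis of how the two (or three) incident $3$-faces sit around $v$ — whether the triangles are consecutive (sharing an edge through $v$) or not — and making sure that in each arrangement the added edges genuinely make $H$ proper (no distance-$2$ pair of surviving vertices loses its closeness) while simultaneously the overlaps among the sets $N(v_i)\setminus\{v\}$ are counted without double-subtracting. I would handle this by fixing the clockwise labelling $v_1,v_2,v_3,v_4$, treating the triangles as occupying prescribed corners, and in each subcase writing $d_2(v)\le |\{v_1,v_2,v_3,v_4\}| + \sum_i (d(v_i)-1) - c$, where $c$ is the number of forced coincidences, then checking $c$ is large enough that the total is at most $15$.
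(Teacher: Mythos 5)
Your proposal is essentially the paper's own argument: delete $v$, add a few edges among $v_1,\dots,v_4$ chosen according to whether the incident triangles are adjacent or not, invoke minimality of $G$ to color the resulting proper graph, and then color $v$ because $d_2(v)\le 15$ (the paper gets $\le 14$ in the $m_3(v)=3$ case and $\le 15$ in the $m_3(v)=2$ cases, exactly as your counts intend). One bookkeeping caution: each incident $3$-face saves \emph{two} units in the count of $N_2(v)$, so two $3$-faces save four --- your phrase ``pull the count down by at least $2$'' undercounts this, and only with the full saving of four does the additional coincidence from a $4$-face or a $4^-$-neighbour bring the total to $\le 15$ as you conclude.
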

\begin{proof}
Assume that $v$ is incident to three $3$-faces $f_1=v_1vv_2$, $f_2=v_2vv_3$, $f_3=v_3vv_4$. Clearly, we have $d_2(v)\leq 14$. If we set $G'=G-v+v_1v_4$ (assuming $v_1v_4\notin E(G)$), then $G'$ would be proper with respect to $G$.  By minimality, the graph $G'$ has a 2-distance coloring $f$ with $16$ colors. Since $d_2(v)\leq 14$, we can color $v$ with an available color, a contradiction. Hence, $m_3(v)\leq 2$.

Suppose further that $m_3(v)=2$, and let $f_1,f_2$ be $3$-faces incident to $v$. We first show that $m_4(v)=0$. By contradiction, assume that $v$ is incident to a $4$-face, say $f_3=v_1vv_2x$ for $x\in N(v_1)\cap N(v_2)$. Notice that $f_1,f_2$ can be adjacent or not. If $f_1$ and $f_2$ are adjacent, say $f_1=v_2vv_3$ and $f_2=v_3vv_4$, then we set $G'=G-v+v_1v_4$. If $f_1$ and $f_2$ are non-adjacent, say $f_1=v_2vv_3$ and $f_2=v_4vv_1$, then we set $G'=G-v+v_3v_4$. In both cases, we have $d_2(v)\leq 15$, and $G'$ is proper with respect to $G$. Similarly as above, we get a contradiction. Thus, $m_4(v)=0$.
We now show that all neighbours of $v$ are $5$-vertices. By Lemma \ref{lem:3-vertex-no-3-4-neighbour}, all of them are $4^+$-vertices.  Assume to the contrary that  there exists  $v_i\in N(v)$ such that it is a $4$-vertex. If $f_1$ and $f_2$ are adjacent, say $f_1=v_1vv_2$ and $f_2=v_2vv_3$, then we set $G'=G-v+v_2v_4$. If $f_1$ and $f_2$ are non-adjacent, say $f_1=v_1vv_2$ and $f_2=v_3vv_4$, then we set $G'=G-v+\{v_2v_3,v_1v_4\}$. In both cases, we have $d_2(v)\leq 15$, and $G'$ is proper with respect to $G$. Similarly as above, we get a contradiction. Thus, $n_5(v)=4$. 
\end{proof}

\begin{proposition}\label{prop:4-vertex-no-common-m1}
Let $v$ be a $4$-vertex with $m_4(v)=2$ and $n_5(v)=4$. If $v$ is incident to two adjacent faces $f_1,f_2$ such that $f_1$ (resp. $f_2$) is a $3$-face (resp. $5^+$-face), then no edge in $G[N(v)]$ is contained in two $3$-faces of $G$.
\end{proposition}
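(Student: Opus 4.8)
The plan is to argue by contradiction with the vertex-deletion scheme used for the preceding lemmas (e.g.\ Lemma~\ref{lem:4-vertex-has-no-two-3-face}): assuming the configuration occurs, delete $v$, add a couple of edges among $N(v)$ to obtain a smaller planar graph $H$ that is proper with respect to $G$, $2$-distance $16$-color $H$ by minimality, and then give $v$ a free color, which only needs $d_2(v)\le 15$.

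First I would fix notation. Since $m_4(v)=2$, $n_5(v)=4$, and $v$ carries the adjacent faces $f_1$ (a $3$-face) and $f_2$ (a $5^+$-face), the four faces around $v$ in cyclic order are $f_1$, $f_2$, and two $4$-faces; after relabelling the neighbours $v_1,v_2,v_3,v_4$ I may assume $f_1=vv_1v_2$, $f_2$ is the face at $v$ bounded by the edges $vv_2$ and $vv_3$, and the two $4$-faces are $vv_3av_4$ and $vv_4bv_1$, so that $a\in N(v_3)\cap N(v_4)$ and $b\in N(v_1)\cap N(v_4)$. Because $f_2$ is a $5^+$-face and the remaining two faces at $v$ are $4$-faces, none of $v_2v_3$, $v_3v_4$, $v_4v_1$ is an edge of $G$, so the only edge of $G[N(v)]$ that could lie in two $3$-faces is $v_1v_2$ itself (if a diagonal $v_1v_3$ or $v_2v_4$ were an edge then $|E(G[N(v)])|\ge 2$ and the count below would only become stronger). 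Assume for a contradiction that $v_1v_2$ lies in a $3$-face $v_1v_2w$ with $w\ne v$; then $w\in N(v_1)\cap N(v_2)$.

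The key step is the bound $d_2(v)\le 15$. As every $v_i$ is a $5$-vertex, $\sum_{i=1}^4 |N(v_i)\setminus\{v\}|=16$; splitting this sum according to whether the counted neighbour belongs to $N(v)$ yields $16=2|E(G[N(v)])|+\sum_u |\{i:u\sim v_i\}|$, the last sum running over the vertices $u$ at distance $2$ from $v$. Consequently $d_2(v)=4+|N_2(v)\setminus N(v)|=4+\bigl(16-2|E(G[N(v)])|\bigr)-\sum_u\bigl(|\{i:u\sim v_i\}|-1\bigr)$. Here $|E(G[N(v)])|\ge 1$ because of the edge $v_1v_2$, and each of the three vertices $a$, $b$, $w$ is adjacent to at least two of the $v_i$, so the last sum is at least $3$; hence $d_2(v)\le 4+16-2-3=15$. (A coincidence among $a,b,w$, or an extra edge in $G[N(v)]$, only makes the right-hand side smaller.)

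Finally, put $H=(G-v)+\{v_1v_4,v_2v_3\}$, both of which are non-edges of $G$; then $|V(H)|+|E(H)|=|V(G)|+|E(G)|-3$. The graph $H$ is proper with respect to $G$: deleting $v$ can affect only the distance-$\le 2$ pairs lying inside $N(v)$, and in $H$ the path $v_3v_2v_1v_4$ together with the common neighbour $a$ of $v_3$ and $v_4$ keeps all six pairs of $N(v)$ at distance $\le 2$. It is also planar: removing $v$ merges the four faces at $v$ into a single face whose boundary walk runs $v_1,v_2$, then along $f_2$ from $v_2$ to $v_3$, then $v_3,a,v_4$, then $v_4,b,v_1$; the chords $v_2v_3$ and $v_1v_4$ cut off the two disjoint boundary arcs carrying $f_2$ and $b$ respectively, so both can be drawn inside this face without crossing. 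By minimality $H$ has a $2$-distance $16$-coloring $\varphi$, which by properness is a proper partial $2$-distance coloring of $G-v$; since $v$ sees at most $d_2(v)\le 15$ colors it has a free color, so $\varphi$ extends to all of $G$ --- a contradiction. The one place needing genuine care is the bookkeeping in degenerate configurations (when $w$ equals $a$ or $b$, when $a=b$, when $f_2$ has a chord, or when the four faces at $v$ are not all distinct, e.g.\ if some edge at $v$ is a bridge): in each such case the slack in $d_2(v)\le 15$ only grows and the new face still supports the two chords, but these sub-cases should be verified rather than dismissed.
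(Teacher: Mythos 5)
Your proposal is correct and follows essentially the same route as the paper: assume an edge of $G[N(v)]$ (necessarily $v_1v_2$) lies in a second $3$-face, observe $d_2(v)\le 15$, pass to $G-v+\{v_1v_4,v_2v_3\}$, which is proper with respect to $G$ because the $4$-face at $v$ supplies a common neighbour of $v_3$ and $v_4$, and extend the $16$-coloring to $v$ by minimality. Your explicit counting argument for $d_2(v)\le 15$ and the planarity check simply make precise what the paper asserts without detail.
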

\begin{proof}
Let $v$ be a $4$-vertex with $m_4(v)=2$ and $n_5(v)=4$. Suppose that $v$ is incident to both a $3$-face $f_1$ and a $5^+$-face $f_2$ such that $f_1$ and $f_2$ are adjacent.
Without loss of generality, we may write $f_1=v_1vv_2$, and assume that $f_2$ is the face containing $v_2,v,v_3$. Since $v$ is incident to two $4$-faces, there exists a common neighbour of each pair  $\{v_3,v_4\}, \{v_1,v_4\}$  in $G-v$. 
Obviously, $v_1v_2$ is the unique edge in $G[N(v)]$ (see Figure \ref{fig:4-adj-faces}).  Assume for a contradiction that $v_1v_2$ is contained in two $3$-faces.  Then we have  $d_2(v)\leq 15$.  If we set  $G'=G-v+\{v_1v_4,v_2v_3\}$, then  $G'$ is proper with respect to $G$.  By minimality, the graph $G'$ has a 2-distance coloring $f$ with $16$ colors. Since $d_2(v)\leq 15$, we can color $v$ with an available color, a contradiction.
\end{proof}

\begin{figure}[htb]
\centering   
\begin{tikzpicture}[scale=1]
\node [nodr] at (0,0) (v) [label=above: {\scriptsize $v$}] {};
\node [nod2] at (1,-1) (v1) [label=below:{\scriptsize $v_1$}] {}
	edge  (v);		
\node [nod2] at (-1,-1) (v2) [label=below:{\scriptsize $v_2$}] {}
	edge [] (v)
	edge [] (v1);
\node [nod2] at (-1,1) (v4) [label=above:{\scriptsize $v_3$}] {}
	edge [] (v);	
\node [nod2] at (1,1) (v5) [label=above:{\scriptsize $v_4$}] {}
	edge [] (v);
\node [nod2] at (-2,-.5) (v13)  {}
	edge  (v2);	
\node [nod2] at (-2,.5) (v53)  {}
	edge  (v4)
	edge [bend right,dotted]  (v13);

\node [nod2] at (2,0) (v21) {}
	edge [] (v1)
	edge [] (v5);
\node [nod2] at (2,-1.5) (v11)  {}
	edge  (v1);	
\node [nod2] at (-2,-1.5) (v22)  {}
	edge  (v2);	
\node [nod2] at (-2,1.5) (v44)  {}
	edge  (v4);	
\node [nod2] at (2,1.5) (v55)  {}
	edge (v5);

\node [nod2] at (0,2) (v41) {}
	edge [] (v4)
	edge [] (v5);


\node at (-1.2,0) (asd)     {\scriptsize $f_2$} ;	
\node at (0,-.6) (asd)     {\scriptsize $f_1$} ;	
\end{tikzpicture}  
\caption{A $4$-vertex with $m_4(v)=2$ and $n_5(v)=4$.}
\label{fig:4-adj-faces}
\end{figure}
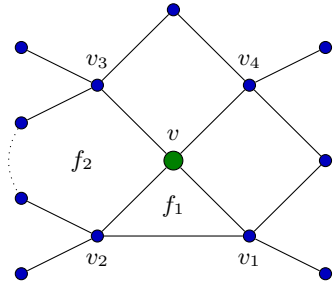

\begin{proposition}\label{prop:4-vertex-no-common}
Let $v$ be a $4$-vertex with $m_3(v)=2$. Then no edge in $G[N(v)]$ is contained in two $3$-faces of $G$.
\end{proposition}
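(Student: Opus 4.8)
The plan is to argue by contradiction, following the template of Lemmas \ref{lem:faces-in-3-face} and \ref{lem:4-vertex-has-no-two-3-face}. Let $v$ be a $4$-vertex with $m_3(v)=2$ and suppose, for a contradiction, that some edge of $G[N(v)]$ lies in two $3$-faces of $G$. By Lemma \ref{lem:4-vertex-has-no-two-3-face} we already know $m_4(v)=0$ and $n_5(v)=4$, so all of $v_1,v_2,v_3,v_4$ (in clockwise order around $v$) are $5$-vertices, and the two $3$-faces at $v$ contribute two distinct edges to $G[N(v)]$. After relabelling and possibly reflecting, I would take the offending edge to be $v_1v_2$, so that there is a vertex $w\neq v$ with $v_1v_2w$ a $3$-face and hence $w\in N(v_1)\cap N(v_2)$; the two $3$-faces at $v$ are then either adjacent, of the form $\{v_1vv_2,\,v_2vv_3\}$, or non-adjacent, of the form $\{v_1vv_2,\,v_3vv_4\}$.

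The first step is the count $d_2(v)\le 15$. Since the four neighbours are $5$-vertices we have $d_2(v)=4+\bigl|N_2(v)\setminus N_1(v)\bigr|$, and, as $N_2(v)\setminus N_1(v)=\bigcup_{i}\bigl(N(v_i)\setminus(\{v\}\cup N(v))\bigr)$ with $\bigl|N(v_i)\setminus(\{v\}\cup N(v))\bigr|=4-\deg_{G[N(v)]}(v_i)$, summing gives $\bigl|N_2(v)\setminus N_1(v)\bigr|\le 16-2\,|E(G[N(v)])|$ minus the number of repetitions among these sets. The two $3$-faces at $v$ force $|E(G[N(v)])|\ge 2$; moreover $w\in N(v_1)\cap N(v_2)$, so either $w\in N(v)$ (whence $|E(G[N(v)])|\ge 3$) or $w$ produces at least one repetition. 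In both cases $\bigl|N_2(v)\setminus N_1(v)\bigr|\le 11$, so $d_2(v)\le 15$. Any coincidence among $w$, the $v_i$, and the remaining neighbours, or the presence of a chord $v_1v_3$ or $v_2v_4$ in $G[N(v)]$, only lowers this bound.

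The second step is the surgery. In the adjacent case set $G'=G-v+\{v_1v_4,\,v_3v_4\}$, and in the non-adjacent case set $G'=G-v+\{v_2v_3,\,v_1v_4\}$, omitting any edge already in $G$. In each case the added edges join cyclically consecutive neighbours of $v$, hence can be drawn inside the distinct faces formerly incident to $v$, so $G'$ is planar; and a direct check shows that every pair among $v_1,v_2,v_3,v_4$ at distance at most $2$ in $G$ remains at distance at most $2$ in $G'$, so $G'$ is proper with respect to $G$. By minimality $G'$ admits a $2$-distance $16$-colouring $f$, every vertex of $G-v$ keeps its colour, and since $d_2(v)\le 15<16$ the vertex $v$ has a free colour, extending $f$ to $G$ — the desired contradiction.

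The routine but slightly delicate part is the bookkeeping of degenerate sub-cases: one should check that the reduction to ``the offending edge is $v_1v_2$'' also covers the situation where the edge of $G[N(v)]$ lying in two $3$-faces is a chord $v_1v_3$ (or $v_2v_4$), or where $v_1v_2$ is a side of a large face at $v$ rather than of a $3$-face; in all such cases either $d_2(v)$ is strictly smaller than $15$ or the argument reduces, after choosing $G'$ as above (with the already-present edges omitted), to exactly the two displayed situations, so no genuinely new idea is required. As in the earlier lemmas, one also uses implicitly that near $v$ the graph is connected enough for the new edges to be embedded planarly, which is the only structural input beyond the degree and face data already recorded.
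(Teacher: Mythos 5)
Your proof is correct and follows essentially the same route as the paper: assume an edge of $G[N(v)]$ lies in two $3$-faces, use Lemma \ref{lem:4-vertex-has-no-two-3-face} to get $n_5(v)=4$, verify $d_2(v)\leq 15$, delete $v$ and add edges joining neighbours of $v$ so that $G'$ is proper with respect to $G$, and extend a $2$-distance $16$-coloring of $G'$ by minimality. The only cosmetic differences are that in the adjacent case you add $\{v_1v_4,\,v_3v_4\}$ where the paper adds the single edge $v_2v_4$ (both keep all pairs of neighbours of $v$ within distance $2$), and that you spell out the count behind $d_2(v)\leq 15$ and the degenerate chord cases, which the paper leaves implicit.
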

\begin{proof}
Let $v$ be a $4$-vertex with  $m_3(v)=2$. 
All $v_i$'s are $5$-vertices by Lemma \ref{lem:4-vertex-has-no-two-3-face}. 
Let $f_1$ and $f_2$ be $3$-faces incident to $v$ such that $f_1=v_1vv_{2}$. Assume for a contradiction that $v_1v_{2}$ is contained in two $3$-faces. Then we have  $d_2(v)\leq 15$. If $f_1$ and $f_2$ are adjacent, say $f_2=v_2vv_{3}$, then  we set $G'=G-v+v_2v_4$. If $f_2$ and $f_3$ are non-adjacent, say $f_2=v_3vv_{4}$, then we set $G'=G-v+\{v_1v_4,v_2v_3\}$. 
In both cases,  $G'$ is proper with respect to $G$. By minimality, the graph $G'$ has a 2-distance coloring $f$ with $16$ colors. Since $v$ has at most $15$ forbidden colors, we can color $v$ with an available color, a contradiction.
\end{proof}

\begin{lemma}\label{lem:4-vertex-bad-3}
If $v$ is a $4$-vertex and incident to a $(4,4,4^+)$-face, then $v$ is incident to two $5^+$-faces.
\end{lemma}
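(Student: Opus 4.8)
The plan is to run the standard reducibility argument of this section. Suppose, for a contradiction, that $v$ is a $4$-vertex incident to a $(4,4,4^+)$-face but incident to at most one $5^+$-face; I would derive $d_2(v)\le 15$ and produce a planar graph $G'=G-v+E'$ that is proper with respect to $G$, which contradicts minimality exactly as in the preceding lemmas. The first step is to read off the local structure. A $(4,4,4^+)$-face is a $3$-face, and since $v$ lies on it we may write it as $vv_1v_2$ (with $v_1,v_2,v_3,v_4$ the neighbours of $v$ in clockwise order); thus $v_1v_2\in E(G)$, and because $d(v)=4$ at least one of $v_1,v_2$ is a $4$-vertex, say $d(v_1)=4$. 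Since $v$ lies on exactly four faces and on at most one $5^+$-face, $m_3(v)+m_4(v)\ge 3$. By Lemma~\ref{lem:4-vertex-has-no-two-3-face}, $m_3(v)=2$ would force $n_5(v)=4$, which is impossible as $d(v_1)=4$; hence $m_3(v)=1$, and therefore $m_4(v)\ge 2$. So, writing $f_{i,i+1}$ for the face incident to $v$ between the edges $vv_i$ and $vv_{i+1}$, at least two of $f_{23},f_{34},f_{41}$ are $4$-faces.

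The second step is to bound $d_2(v)$. Put $S=N(v)$ and $e=|E(G[S])|$. A direct count of $N_2(v)$ gives the identity
\[
d_2(v)=\sum_{i=1}^4 d(v_i)-2e-c,\qquad c=\sum_{w}\big(|N(w)\cap S|-1\big),
\]
where the last sum runs over the vertices $w$ at distance exactly $2$ from $v$; in particular $c\ge 0$, and $c$ is increased by every vertex outside $S$ having two or more neighbours in $S$. Here $\sum_i d(v_i)\le 4+5+5+5=19$ since $d(v_1)=4$, and $e\ge 1$ since $v_1v_2\in E(G)$. Now consider a $4$-face $vv_iv_{i+1}x$ incident to $v$: its fourth vertex $x\neq v$ is adjacent to both $v_i$ and $v_{i+1}$. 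If $x\in S$, then $xv_i$ and $xv_{i+1}$ are distinct edges of $G[S]$ of which at most one is $v_1v_2$, so $e\ge 2$ and already $d_2(v)\le 19-4=15$. Otherwise every such fourth vertex lies outside $S$, hence is a distance-$2$ vertex with at least two neighbours in $S$; using the $\ge 2$ available $4$-faces, and checking separately the case in which two of them share their fourth vertex (that vertex then having at least three neighbours among the $v_i$), I obtain $c\ge 2$ and hence $d_2(v)\le 19-2-2=15$.

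The third step is to supply $G'$. As $v_1v_2$ is already an edge, set $G'=G-v+\{v_1v_3,v_1v_4\}$, omitting any of these edges already present in $G$. Deleting $v$ merges the four faces around $v$ into a single face whose boundary carries $v_1,v_3,v_4$, so the two new edges, both incident to $v_1$, can be inserted without crossings, and $G'$ is planar. In $G'$ the vertex $v_1$ is adjacent to each of $v_2,v_3,v_4$, so any two vertices of $S$ are within distance $2$ in $G'$; since the only pairs whose distance could have been shortened by deleting $v$ are pairs inside $N(v)$, the graph $G'$ is proper with respect to $G$. A $2$-distance $16$-coloring of $G'$ (which exists by minimality) restricts to $G-v$, and since $d_2(v)\le 15$ the vertex $v$ has an available colour, contradicting the choice of $G$.

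The step I expect to take the most care is the second one: justifying the displayed formula for $d_2(v)$ and then handling the small case analysis on the fourth vertices of the two guaranteed $4$-faces, in particular the branch where such a vertex lies in $N(v)$ (which must be shown to create a \emph{new} edge of $G[S]$ rather than merely $v_1v_2$ again) and the branch where the two $4$-faces share a fourth vertex. One must also keep the accounting clean, so that the saving of $2$ coming from the edge $v_1v_2$ (via $-2e$) and the saving of $2$ coming from the $4$-faces (via $-c$) lie in disjoint terms and genuinely combine to push $\sum_i d(v_i)\le 19$ down to $15$. The planarity and properness of $G'$ are routine once $v_1v_2\in E(G)$ is available.
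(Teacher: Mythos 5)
Your proposal is correct and follows essentially the same route as the paper: assume $v$ meets at most one $5^+$-face, deduce $d_2(v)\leq 15$, and apply minimality to $G'=G-v+\{v_1v_3,v_1v_4\}$, which is exactly the paper's construction. The only difference is that you spell out the counting behind $d_2(v)\leq 15$ (ruling out $m_3(v)=2$ via Lemma~\ref{lem:4-vertex-has-no-two-3-face} and accounting for the savings from $v_1v_2$ and the fourth vertices of the $4$-faces), details the paper simply asserts.
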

\begin{proof}
Let $v$ be a $4$-vertex, and 
suppose that $f$ is a $(4,4,4^+)$-face so that $f=v_1vv_2$ with $d(v_1)=4$ and $d(v_2)\geq 4$. By contradiction, assume that $v$ is incident to at most one $5^+$-faces. We then deduce that $v$ is incident to three $4^-$-faces, which implies that $d_2(v)\leq 15$.  In such a case, we set $G'=G-v+\{v_1v_3,v_1v_4\}$ (if $v_1v_3,v_1v_4\notin E(G)$).  By minimality, the graph $G'$ has a 2-distance coloring $f$ with $16$ colors. Clearly, $G'$ is proper with respect to $G$. Because of $d_2(v)\leq 15$, we can color $v$ with an available color, a contradiction. 
\end{proof}



\begin{lemma}\label{lem:4-vertex-semi-bad-3}
If $v$ is a $4$-vertex and incident to a $(4,5,5)$-face, then  $v$ is incident to  a $5^+$-face. 
\end{lemma}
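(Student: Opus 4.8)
The plan is to argue by contradiction in the style of the preceding lemmas. Let $v$ be a $4$-vertex with $N(v)=\{v_1,v_2,v_3,v_4\}$ in clockwise order, and let the $(4,5,5)$-face be $f=v_1vv_2$; since $v$ is the $4$-vertex on $f$, necessarily $d(v_1)=d(v_2)=5$, while $d(v_3),d(v_4)\leq 5$ because $\Delta\leq 5$. By Lemma \ref{lem:3-vertex-no-3-4-neighbour} every $v_i$ is a $4^+$-vertex, and in particular $\sum_{i=1}^4 d(v_i)\leq 20$. Suppose, for a contradiction, that $v$ is incident to \emph{no} $5^+$-face; then all four faces at $v$ are $4^-$-faces, so $m_3(v)+m_4(v)=4$ with $m_3(v)\geq 1$ on account of $f$.

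First I would pin down $d_2(v)$ in terms of the face types at $v$. Each face incident to $v$ is bounded by two consecutive edges $vv_j,vv_{j+1}$; a $3$-face forces the edge $v_jv_{j+1}$ inside $N(v)$, while a $4$-face forces a common neighbour of $v_j$ and $v_{j+1}$ distinct from $v$. Starting from the gross count $\sum_{i}(d(v_i)-1)$ of neighbours-of-neighbours, each $3$-face removes two members of $N(v)$ from the distance-$2$ tally and each $4$-face makes one distance-$2$ vertex be shared by two of the $v_i$, giving
\[
d_2(v)\;\leq\;\sum_{i=1}^4 d(v_i)-2m_3(v)-m_4(v).
\]
Substituting $m_4(v)=4-m_3(v)$ and $\sum_i d(v_i)\leq 20$ then yields
\[
d_2(v)\;\leq\;20-2m_3(v)-(4-m_3(v))\;=\;16-m_3(v)\;\leq\;15 .
\]

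Finally, exactly as in Lemma \ref{lem:4-vertex-bad-3}, I would set $H=G-v+\{v_1v_3,v_1v_4\}$ (adding only the edges not already present). Since $v_1v_2\in E(G)$, the vertex $v_1$ is then adjacent to each of $v_2,v_3,v_4$ in $H$, so every pair among $\{v_1,v_2,v_3,v_4\}$—the only pairs whose mutual distance can grow when $v$ is deleted—remains at distance at most $2$; hence $H$ is proper with respect to $G$. Routing the two new edges through the region vacated by $v$ keeps $H$ planar and strictly decreases $|V|+|E|$, so by minimality $H$ admits a $2$-distance $16$-coloring, whose restriction is a $2$-distance $16$-coloring of $G-v$. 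Because $d_2(v)\leq 15$, at most $15$ colors are forbidden at $v$, so $v$ can be colored, contradicting the choice of $G$.

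The main obstacle is making the displayed bound honest: one must check that the coincidences counted for different faces do not interfere—for instance a $4$-face's extra vertex coinciding with another $v_i$, two $4$-faces sharing that vertex, or additional chords in $G[N(v)]$. Every such extra coincidence only \emph{decreases} $d_2(v)$, so the inequality holds as stated, but I would record this explicitly rather than leave it implicit. Note also that $\sum_i d(v_i)\leq 20$ is exactly tight here; this is precisely why the hypothesis is the $(4,5,5)$-face and why the conclusion can only guarantee one $5^+$-face (a single incident $5^+$-face would leave $m_3(v)+m_4(v)=3$ and only yield $d_2(v)\leq 16$), in contrast to the stronger two-face conclusion available for $(4,4,4^+)$-faces.
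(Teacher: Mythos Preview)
Your $d_2(v)\le 15$ computation is correct and the graph $H=G-v+\{v_1v_3,v_1v_4\}$ is indeed planar and proper with respect to $G$, but there is a genuine gap at the minimality step: the construction may violate the degree bound $\Delta\le 5$. In Lemma~\ref{lem:4-vertex-bad-3} the vertex $v_1$ that absorbs both new edges is a $4$-vertex, so after losing $vv_1$ and gaining two edges it ends with degree at most $5$. Here, by contrast, $d(v_1)=5$; under the contradiction hypothesis one has (via Lemma~\ref{lem:4-vertex-has-no-two-3-face}) $m_3(v)=1$ and $m_4(v)=3$, so in general $v_1v_3,v_1v_4\notin E(G)$ and then $d_H(v_1)=5-1+2=6$. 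Since $G$ is a minimal counterexample only among planar graphs with maximum degree at most $5$, minimality does not yield a $2$-distance $16$-coloring of $H$ when $\Delta(H)=6$, and the extension argument cannot start. So the phrase ``exactly as in Lemma~\ref{lem:4-vertex-bad-3}'' hides a real obstruction.

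The paper fixes this by spreading the two new edges over both $5$-vertices on the $(4,5,5)$-face: take $G'=G-v+\{v_1v_4,\,v_2v_3\}$. Then every $v_i$ loses one edge and gains at most one, so $\Delta(G')\le 5$. Properness still holds: $v_1v_2\in E(G)$ and the added edges $v_1v_4,v_2v_3$ put every pair among $v_1,\dots,v_4$ at distance $\le 2$ in $G'$ except possibly $\{v_3,v_4\}$, and for that pair the $4^-$-face between $vv_3$ and $vv_4$ supplies either the edge $v_3v_4$ or a common neighbour in $G-v$. With this single change your argument goes through unchanged.
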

\begin{proof}
Let $v$ be a $4$-vertex, and 
suppose that $f$ is a $(4,5,5)$-face so that $f=v_1vv_2$ with $d(v_1)=d(v_2)=5$. Assume for a contradiction that $v$ is not incident to any $5^+$-face, i.e., all faces incident to $v$ are  $4^-$-faces,  which implies that $d_2(v)\leq 15$.    Let $G'=G-v+\{v_1v_4,v_2v_3\}$ (if $v_1v_4,v_2v_3\notin E(G)$).  By minimality, the graph $G'$ has a 2-distance coloring $f$ with $16$ colors. Clearly, $G'$ is proper with respect to $G$. Since $v$ has at most $15$ forbidden colors, we can color $v$ with an available color, a contradiction. Thus, $v$ is incident to  a $5^+$-face. 
\end{proof}

\begin{proposition}\label{prop:4-vertex-4-5-5-no-4}
Let $v$ be a $4$-vertex and incident to a $(4,5,5)$-face. If $m_3(v)=1$, and $v$ is incident to at most one $5^+$-face, then all neighbours of $v$ are $5$-vertices.
\end{proposition}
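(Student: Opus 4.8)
The plan is to argue by contradiction. List the neighbours of $v$ as $v_1,v_2,v_3,v_4$ in clockwise order, with $f=v_1vv_2$ the given $(4,5,5)$-face, so $d(v_1)=d(v_2)=5$. By Lemma~\ref{lem:deg-geq-3} and Lemma~\ref{lem:3-vertex-no-3-4-neighbour} every neighbour of $v$ is a $4^{+}$-vertex, so if the conclusion fails then some vertex of $\{v_3,v_4\}$ is a $4$-vertex. By Lemma~\ref{lem:4-vertex-semi-bad-3} the vertex $v$ is incident to a $5^{+}$-face, and by hypothesis to exactly one; since $m_3(v)=1$ and $v$ is a $4$-vertex, this forces $m_4(v)=2$. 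Write $F_{12}=f,F_{23},F_{34},F_{41}$ for the four faces around $v$, where $F_{i,i+1}$ lies between $vv_i$ and $vv_{i+1}$ (indices modulo $4$); then $\{F_{23},F_{34},F_{41}\}$ consists of two $4$-faces and one $5^{+}$-face. The reflective symmetry $v_1\leftrightarrow v_2$, $v_3\leftrightarrow v_4$ fixes $F_{12}$ and $F_{34}$ and interchanges $F_{23}$ and $F_{41}$, so up to this symmetry there are two cases: the $5^{+}$-face is $F_{34}$ (Case A), or it is $F_{23}$ (Case B); in Case A the same symmetry lets me assume $v_3$ is the $4$-vertex.

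In each case the two $4$-faces incident to $v$ supply vertices outside $N[v]$ witnessing distance-$2$ adjacencies that do not use $v$: in Case A a vertex $a\in N(v_2)\cap N(v_3)$ (from $F_{23}$) and $b\in N(v_1)\cap N(v_4)$ (from $F_{41}$); in Case B a vertex $a\in N(v_3)\cap N(v_4)$ (from $F_{34}$) and $b\in N(v_1)\cap N(v_4)$ (from $F_{41}$). Also $v_1v_2\in E(G)$, while no two neighbours of $v$ consecutive in the rotation other than $v_1,v_2$ are adjacent, since such an edge would create a second $3$-face. A routine count of $N_2(v)$, in which the four vertices $v_1,\dots,v_4$, the edge $v_1v_2$, and the two common neighbours $a,b$ are each discounted once, then gives
\[ d_2(v)\ \le\ d(v_1)+d(v_2)+d(v_3)+d(v_4)-4\ =\ 6+d(v_3)+d(v_4)\ \le\ 15, \]
using that one of $v_3,v_4$ has degree $4$ and $d(v_i)\le 5$ for all $i$; the degenerate possibilities ($a=b$, or $a$ or $b$ lying in $N[v]$) only lower the count.

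To conclude, I would put $G'=G-v+\{v_1v_4,v_2v_4,v_3v_4\}$ in Case A and $G'=G-v+\{v_2v_3,v_2v_4\}$ in Case B, omitting any edge already present. In each case the new edges form a fan from a single vertex inside the face vacated by $v$, so $G'$ is planar, and clearly $|V(G')|+|E(G')|<|V(G)|+|E(G)|$. The graph $G'$ is proper with respect to $G$, since the only pairs of vertices whose $G$-distance could grow after removing $v$ lie in $\{v_1,v_2,v_3,v_4\}$, and each of the six such pairs is at distance at most $2$ in $G'$: in Case A, $v_1v_3$ through $v_4$, $v_2v_3$ through $a$, and the remaining pairs directly; in Case B, $v_1v_3$ through $v_2$, $v_1v_4$ through $b$, $v_3v_4$ through $a$, and the rest directly. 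By minimality $G'$ admits a $2$-distance $16$-colouring, and since $v$ has at most $d_2(v)\le 15$ forbidden colours it can be coloured, extending the colouring to $G$ — a contradiction. Hence every neighbour of $v$ is a $5$-vertex.

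I expect the main difficulty to be precisely the choice of $G'$: simply adding both ``diagonals'' $v_1v_3$ and $v_2v_4$ would preserve all required distances, but these two chords cross inside the face left by $v$, so one must instead exploit the common neighbours $a,b$ forced by the $4$-faces and pick a planar fan, and the correct fan depends on where the unique $5^{+}$-face sits relative to the $3$-face $f$ — which is what makes the case split unavoidable. A secondary subtlety is that the estimate $d_2(v)\le 15$ is tight, so the vertices $v_1,\dots,v_4$, the edge $v_1v_2$, and the common neighbours $a,b$ must all be discounted without error.
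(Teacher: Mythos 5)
Your proof is correct and follows essentially the same route as the paper: assume a $4$-neighbour, use the edge $v_1v_2$ of the $3$-face, the $4$-neighbour, and the two common neighbours forced by the two $4$-faces to get $d_2(v)\leq 15$, then delete $v$, add edges among $N(v)$ chosen according to whether the two $4$-faces are adjacent (equivalently, where the unique $5^+$-face sits), and extend a $2$-distance $16$-coloring of the smaller proper graph by minimality. The only difference is the particular edge sets added (your fans $\{v_1v_4,v_2v_4,v_3v_4\}$ and $\{v_2v_3,v_2v_4\}$ versus the paper's $\{v_2v_3,v_3v_4\}$ and $\{v_2v_3,v_1v_4\}$), which is immaterial.
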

\begin{proof}
Let $v$ be a $4$-vertex with  $m_3(v)=1$. 
By Lemma \ref{lem:3-vertex-no-3-4-neighbour}, we have $n_3(v)=0$. Suppose that $f$ is a $(4,5,5)$-face such that $f=v_1vv_2$ with $d(v_1)=d(v_2)=5$. Let $v$ be incident to at most one $5^+$-face. Assume for a contradiction that $v$ has a $4$-neighbour, say $v_3$. Since $m_3(v)=1$, and $v$ is incident to at most one $5^+$-face, we can say that $v$ is incident to at least two $4$-faces, say $f_2,f_3$. Clearly, we have $d_2(v)\leq 15$. If $f_2$ and $f_3$ are adjacent, then  we set $G'=G-v+\{v_2v_3,v_1v_4\}$. If $f_2$ and $f_3$ are non-adjacent, then we set $G'=G-v+\{v_2v_3,v_3v_4\}$. 
In each case,  $G'$ is proper with respect to $G$. By minimality, the graph $G'$ has a 2-distance coloring $f$ with $16$ colors. Since $d_2(v)\leq 15$, we can color $v$ with an available color, a contradiction. Thus, all neighbours of $v$ are $5$-vertices.
\end{proof}

We will now address some properties of $5$-vertices, with the assumption that $v_1, v_2, \ldots, v_5$ are the neighbours of $v$ in clockwise order. Furthermore, if $m_3(v) = 4$, we consider the $3$-faces incident to $v$ as $f_1 = v_1vv_2$, $f_2 = v_2vv_3$, $f_3 = v_3vv_4$, and $f_4 = v_4vv_5$.

\begin{lemma}\label{lem:faces-on-5-vertex}
Let $v$ be a $5$-vertex. Then $m_3(v)\leq 4$ and $n_3(v)\leq 4$. 
\end{lemma}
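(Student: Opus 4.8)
The plan is to argue by discharging-style reduction exactly as in the preceding lemmas: both claims are proved by contradiction, producing from $G$ a smaller graph $H$ that is proper with respect to $G$, invoking minimality to $2$-distance $16$-color $H$, and extending the coloring to $v$ because $d_2(v)\le 15$. For the bound $m_3(v)\le 4$: suppose $v$ is incident to five $3$-faces, so $f_i=v_ivv_{i+1}$ for all $i\in\{1,\dots,5\}$ (indices mod $5$). Then $N(v)$ induces a $5$-cycle $v_1v_2v_3v_4v_5$ in $G$, so $N_2(v)$ consists of $v_1,\dots,v_5$ together with at most $\sum_{i=1}^5(d(v_i)-3)\le 5\cdot 2=10$ further vertices, giving $d_2(v)\le 15$. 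Here I would also use Lemma~\ref{lem:3-vertex-no-3-4-neighbour} to note that no $v_i$ can be a $3$-vertex (each $v_i$ has $v$ as a $5$-neighbour, fine, but $v_i$ also lies on two triangles so its degree is at least... actually the cleaner point is simply the arithmetic $d(v_i)\le 5$). Delete $v$; since $G[N(v)]$ already contains the cycle through $v_1,\dots,v_5$, any two neighbours of $v$ remain at distance at most $2$ in $G-v$, so $H=G-v$ is proper with respect to $G$. By minimality $H$ has a $2$-distance $16$-coloring, and since $v$ has at most $d_2(v)\le 15$ forbidden colors, we can color $v$, a contradiction; hence $m_3(v)\le 4$.

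For the bound $n_3(v)\le 4$: suppose all five neighbours $v_1,\dots,v_5$ are $3$-vertices. By Lemma~\ref{lem:3-vertex-no-3-4-neighbour} a $3$-vertex has no $4^-$-neighbour, so each $v_i$ has its two neighbours other than $v$ being $5$-vertices; in particular the $v_i$ are pairwise non-adjacent and have no common neighbour other than $v$ (a common neighbour $w\ne v$ of $v_i,v_j$ would force, together with $v$, that... again the key is just counting). Thus $N_2(v)=\{v_1,\dots,v_5\}\cup\bigcup_i (N(v_i)\setminus\{v\})$ and $|N_2(v)|\le 5+5\cdot 2=15$ with equality only if all those outer vertices are distinct; in any case $d_2(v)\le 15$. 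Now form $H$ from $G-v$ by adding all edges $v_iv_j$ ($1\le i<j\le 5$) that are not already present — this makes every pair of neighbours of $v$ adjacent, hence at distance at most $2$ in $H$, while not decreasing any other distance, so $H$ is proper with respect to $G$ and planarity of $H$ is not required (properness is the only thing used). Wait — $H$ need not be planar, but minimality was stated for planar $G'$ only; to stay safe I would instead add edges along a path, e.g. pass to $H=G-v+\{v_1v_2,v_2v_3,v_3v_4,v_4v_5\}$ together with whichever chords are needed so that within $G-v$ each pair among $v_1,\dots,v_5$ is at distance $\le 2$. In fact, since each $v_i$ has two $5$-neighbours outside, after adding the path $v_1v_2v_3v_4v_5$ the only pairs possibly at distance $>2$ are $\{v_1,v_4\},\{v_1,v_5\},\{v_2,v_5\}$; adding the single extra edge configuration needed, or simply $v_1v_5$ plus one chord, keeps $H$ planar (we are replacing the high-degree hub $v$ by a short cycle on its neighbours). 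Then apply minimality to the planar proper graph $H$, color $v$ using one of the $\ge 1$ available colors, contradiction; hence $n_3(v)\le 4$.

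The main obstacle I anticipate is the bookkeeping in the $n_3(v)\le 4$ case: one must choose the added edges so that (i) $H$ is simple and planar, so that minimality applies, and (ii) $H$ is proper, i.e. all pairs of former neighbours of $v$ that were at distance $\le 2$ in $G$ (here: all pairs, since they shared the neighbour $v$) are at distance $\le 2$ in $H$. The clean resolution is that replacing the degree-$5$ hub $v$ by a spanning cycle (or path plus one or two chords) on $\{v_1,\dots,v_5\}$ can always be drawn inside the face structure vacated by $v$, so planarity is preserved; once that is granted, the color count $d_2(v)\le 15<16$ finishes both parts immediately. I would state the edge set explicitly for each of the two parts and verify properness in one line, mirroring the style of Lemmas~\ref{lem:4-vertex-has-no-two-3-face} and~\ref{lem:4-vertex-bad-3}.
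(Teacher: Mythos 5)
Your proposal is correct and takes essentially the same route as the paper: for $m_3(v)\le 4$ you delete $v$ and use the induced $5$-cycle on $N(v)$ to get $d_2(v)\le 15$, and for $n_3(v)\le 4$ you delete $v$ and add the cycle $v_1v_2v_3v_4v_5v_1$ on the neighbours, exactly the paper's $G'=G-v+\{v_1v_2,v_2v_3,v_3v_4,v_4v_5,v_5v_1\}$. The only cosmetic point is your hedging about an extra chord: a $5$-cycle already has diameter $2$, so closing the path into the cycle alone makes every pair of former neighbours of $v$ lie at distance at most $2$, and planarity is preserved by drawing the cycle in the region vacated by $v$.
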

\begin{proof}
Assume to the contrary that all faces incident to $v$ are $3$-faces, i.e, $m_3(v)=5$. Let $G'=G-v$.  By minimality, the graph $G'$ has a 2-distance coloring $f$ with $16$ colors.  Clearly, $G'$ is proper with respect to $G$. Since $v$ has at most $15$ forbidden colors, we can color $v$ with an available color, a contradiction. Thus, $m_3(v)\leq 4$.

Let us now assume that all neighbours of $v$ are $3$-vertices, i.e, $n_3(v)=5$. If we set $G'=G-v+\{v_1v_2,v_2v_3,v_3v_4,v_4v_5,v_5v_1\}$ (if $v_iv_{i+1}\notin E(G)$ in the cyclic fashion), then  $G'$ is proper with respect to $G$.
Similarly as above, we get a contradiction since $d_2(v)\leq 15$. 
Thus, $n_3(v)\leq 4$. 
%
%
\end{proof}


\begin{proposition}\label{prop:  v has two 4 neighbour}
Let $v$ be a $5$-vertex. If $m_3(v)=n_3(v)= 2$, then $v$ has at  most one $4$-neighbour. 
\end{proposition}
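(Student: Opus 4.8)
The plan is to argue by contradiction in the style of the earlier lemmas: assume $v$ is a $5$-vertex with $m_3(v)=n_3(v)=2$ and at least two $4$-neighbours, build a graph $G'$ on $V(G-v)$ that is proper with respect to $G$ and has strictly smaller $|V|+|E|$, $2$-distance $16$-colour $G'$ by minimality, and then extend the colouring to $v$ because it will turn out that $d_2(v)\le 15$.

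First I would pin down the local structure. By Lemma \ref{lem:faces-in-3-face} no $3$-vertex lies on a $3$-face, so neither $3$-neighbour of $v$ lies on a $3$-face incident with $v$. Writing the faces around $v$ as $f_1,\dots,f_5$ with $f_i$ incident to $v_i,v_{i+1}$, each $3$-neighbour $v_a$ forces both $f_{a-1}$ and $f_a$ to be non-triangular. Since exactly two of the five faces are $3$-faces, the two forbidden pairs $\{f_{a-1},f_a\}$ must overlap, which forces the two $3$-neighbours to be consecutive; after relabelling they are $v_1,v_2$, and then the two $3$-faces must be exactly $f_3=vv_3v_4$ and $f_4=vv_4v_5$, so $v_3v_4,v_4v_5\in E(G)$. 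The remaining neighbours $v_3,v_4,v_5$ are precisely the $4^+$-neighbours, so under the assumption at least two of them are $4$-vertices and the third has degree at most $5$, hence $\sum_{i=1}^{5}d(v_i)\le 3+3+4+4+5=19$.

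Next I would bound $d_2(v)$. Counting $N_2(v)$ as $N(v)$ together with the vertices at distance exactly $2$, and using that among the $d(v_i)-1$ neighbours of $v_i$ other than $v$ we already know that $v_4$ is a neighbour of $v_3$, both $v_3$ and $v_5$ are neighbours of $v_4$, and $v_4$ is a neighbour of $v_5$, one gets $d_2(v)\le 5+(d(v_1)-1)+(d(v_2)-1)+(d(v_3)-2)+(d(v_4)-3)+(d(v_5)-2)=\sum_i d(v_i)-4\le 15$. Then I would set $G'=(G-v)+\bigl(\{v_1v_2,v_2v_3,v_1v_5\}\setminus E(G)\bigr)$; by Lemma \ref{lem:faces-in-3-face} (since $v_1,v_2$ are $3$-vertices) none of these three edges is already present, and they are short, pairwise non-crossing chords of the face left by deleting $v$, so $G'$ is planar. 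In $G'$ the vertices $v_1,\dots,v_5$ lie on the $5$-cycle $v_1v_2v_3v_4v_5v_1$, so any two of them are at distance at most $2$; since deleting $v$ can only lengthen a distance-$2$ path when that path runs through $v$ (hence between two neighbours of $v$), and all such pairs are repaired, $G'$ is proper with respect to $G$. Moreover $|V(G')|+|E(G')|\le |V(G)|+|E(G)|-3$, so by minimality $G'$ has a $2$-distance $16$-colouring $f$, which colours every vertex of $G$ except $v$; as $d_2(v)\le 15$ there is a free colour for $v$, a contradiction. Hence $v$ has at most one $4$-neighbour.

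I expect the only real subtlety to be the first step: verifying carefully that the joint constraint $m_3(v)=n_3(v)=2$ genuinely forces the two $3$-neighbours to be consecutive and the two $3$-faces to be $f_3$ and $f_4$, and confirming that the three chords $v_1v_2,v_2v_3,v_1v_5$ are new and can be added while keeping planarity. Once the configuration is fixed, the degree-sum bound, the computation $d_2(v)\le 15$, and the minimality/extension argument are entirely routine and mirror the proofs given throughout Section \ref{sec:premECE}.
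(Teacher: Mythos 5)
Your proposal is correct and follows essentially the same route as the paper: argue by contradiction, use Lemma \ref{lem:faces-in-3-face} to force the two $3$-neighbours to be consecutive and the two $3$-faces adjacent, verify $d_2(v)\leq 15$, form $G'$ from $G-v$ by adding the missing chords so that $v_1v_2v_3v_4v_5$ becomes a $5$-cycle (the paper adds $v_3v_4,v_4v_5,v_5v_1$ under its labeling, which is your construction up to relabeling), and extend a $2$-distance $16$-coloring of $G'$ to $v$ by minimality. Your write-up simply makes the face/neighbour case analysis and the $d_2(v)$ count more explicit than the paper does.
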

\begin{proof}
Let $m_3(v)=n_3(v)= 2$.  Since  $n_3(v)= 2$, the $3$-faces incident to $v$, say $f_1$ and $f_2$, must be adjacent by Lemma \ref{lem:faces-in-3-face}. We write  $f_1=v_1vv_2$ and $f_2=v_2vv_3$. By contradiction, assume that $v$ has two $4$-neighbours. This infer that $d_2(v)\leq 15$. If we set $G'=G-v+\{v_3v_4,v_4v_5,v_5v_1\}$, then $G'$ is proper with respect to $G$. By minimality, the graph $G'$ has a 2-distance coloring $f$ with $16$ colors. Due to $d_2(v)\leq 15$, we can color $v$ with an available color, a contradiction. 
\end{proof}

\begin{lemma}\label{lem:5-vertex-three-4-neigh}
Let $v$ be a $5$-vertex with  $n_3(v)= 1$. 
\begin{itemize}
\item[$(a)$] If $m_3(v)=2$, then $v$ has at  most three $4$-neighbours. 
\item[$(b)$] If $m_3(v)=2$, and $v$ has a $4$-neighbour, then  $v$ is incident to a $5^+$-face. 
\item[$(c)$] If $m_3(v)=3$, then  $v$ has at  most one $4$-neighbours and incident to at least one $5^+$-face.
\item[$(d)$] If $m_3(v)=3$, and $v$ has a $4$-neighbour, then  $v$ is incident to two $5^+$-face.
\end{itemize}
\end{lemma}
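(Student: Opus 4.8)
The plan is to treat the four parts with the single reduction used throughout this section: assuming the stated conclusion fails, I would show that the hypotheses either force $d_2(v)\le 15$ or contradict an already-established configuration, then delete $v$, reconnect $N(v)$ by a short list of new edges to obtain a planar graph $G'$ that is proper with respect to $G$, and finally extend to $v$ a $2$-distance $16$-coloring of $G'$ (which exists by minimality of $G$), reaching a contradiction.

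First I would record what $n_3(v)=1$ gives: exactly one neighbour of $v$, say $v_1$, is a $3$-vertex, and $v_2,v_3,v_4,v_5$ are $4^+$-vertices. By Lemma~\ref{lem:faces-in-3-face} the $3$-vertex $v_1$ lies on no $3$-face and on at most one $4$-face. Consequently every $3$-face at $v$ sits on one of the corners $\{v_2,v_3\},\{v_3,v_4\},\{v_4,v_5\}$; when $m_3(v)=3$ this forces $f_1=v_2vv_3$, $f_2=v_3vv_4$, $f_3=v_4vv_5$, so $v_2v_3v_4v_5$ is a path in $G[N(v)]$, and when $m_3(v)=2$ the two $3$-faces leave the corners $\{v_1,v_2\}$, $\{v_5,v_1\}$, and exactly one more corner non-triangular. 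For the "$5^+$-face" assertions $(b)$ and the second half of $(c)$ no coloring argument is then needed: if $v$ were incident to no $5^+$-face, the faces at the corners $\{v_1,v_2\}$ and $\{v_5,v_1\}$ would both be $4$-faces (they cannot be $3$-faces, $v_1$ being a $3$-vertex) and both are incident to $v_1$, contradicting $m_4(v_1)\le 1$.

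The remaining assertions rely on the estimate
\[
d_2(v)\ \le\ 5+\sum_{i=1}^{5}\bigl(d(v_i)-1\bigr)-\sum_{i=1}^{5}\bigl|N(v_i)\cap N(v)\bigr|-s,
\]
where $s$ counts the vertices outside $N(v)\cup\{v\}$ shared by two of the $N(v_i)$ because of a $4$-face incident to $v$ (the cases in which such extra vertices coincide or already lie in $N(v)$ only decrease $d_2(v)$). For $(a)$, four $4$-neighbours means $(d(v_1),\dots,d(v_5))=(3,4,4,4,4)$, and the two incident $3$-faces contribute at least $4$ to $\sum_i|N(v_i)\cap N(v)|$ whether adjacent or not, so $d_2(v)\le 5+14-4=15$. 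The first claim of $(c)$ is the same with the fan in place: degrees $(3,4,4,5,5)$ and $\sum_i|N(v_i)\cap N(v)|\ge 6$ give $d_2(v)\le 5+16-6=15$. For $(d)$, part $(c)$ already yields that $v$ has exactly one $4$-neighbour and at least one incident $5^+$-face; if a second $5^+$-face failed, the faces at $v$ would be the fan, one $5^+$-face and one $4$-face, the degree sequence would be $(3,4,5,5,5)$, and the $4$-face gives $s\ge 1$, so $d_2(v)\le 5+17-6-1=15$.

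Whenever $d_2(v)\le 15$ is reached, I would set $G'=G-v+\{v_iv_{i+1}:v_iv_{i+1}\notin E(G)\}$, indices modulo $5$. Since $v_1,\dots,v_5$ occur in this cyclic order on the face created by deleting $v$, the added edges can be drawn as non-crossing chords of that face, so $G'$ is planar; $G'$ is proper with respect to $G$ because any two neighbours of $v$ lie on the cycle $v_1v_2\cdots v_5$ in $G'$ and hence are at distance at most $2$; and $|V(G')|+|E(G')|<|V(G)|+|E(G)|$. By minimality $G'$ admits a $2$-distance $16$-coloring, which then leaves an available color for $v$ — the desired contradiction. I expect the only real work to be the case bookkeeping for $d_2(v)$ (adjacent versus non-adjacent $3$-faces, and overlaps among the apex vertices of incident $4$-faces) together with checking in each sub-case that the chosen edge set keeps $G'$ simple; the underlying counting itself is tight but routine.
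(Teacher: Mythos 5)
Your proposal is correct, and for parts (a), the first half of (c), and (d) it is essentially the paper's argument: establish $d_2(v)\le 15$ by the same counting (fan/two triangles in $G[N(v)]$, plus the apex of an incident $4$-face), delete $v$, add the missing edges of the cycle $v_1v_2v_3v_4v_5$ (which is exactly the edge set the paper adds in those cases), invoke minimality, and extend the coloring to $v$. Where you genuinely diverge is on the $5^+$-face claims: for (b) and the second half of (c) you avoid any coloring argument by observing that the two faces at $v$ containing the edge to the $3$-neighbour are incident to that $3$-vertex, hence by Lemma \ref{lem:faces-in-3-face} neither is a $3$-face and at most one is a $4$-face, so one of them is a $5^+$-face. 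This is valid (it is the same use of Lemma \ref{lem:faces-in-3-face} the paper itself makes when discharging $3$-vertices) and in fact proves a stronger statement, since it needs neither $m_3(v)=2$ nor the existence of a $4$-neighbour; by contrast, the paper proves (b) via a reduction whose properness depends on the three incident $4$-faces supplying common neighbours of $v_5$ with $v_1$ and $v_4$, together with a $d_2(v)\le 15$ count that does use the $4$-neighbour hypothesis. Your route buys a shorter, purely structural proof of the face statements and a uniform reduction ("complete the $5$-cycle") for the remaining parts; the paper's route is more case-by-case but needs no explicit counting formula, handling each configuration with a tailored edge set. The one point to keep straight in a write-up is the bookkeeping you already flag: the subtraction $\sum_i |N(v_i)\cap N(v)|$ and the apex term $s$ concern disjoint sets of vertices (inside versus outside $N(v)$), and if a $4$-face's apex happens to lie in $N(v)$ the bound only improves.
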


\begin{proof}
Let $v_5$ be $3$-neighbour of $v$. By Lemma \ref{lem:faces-in-3-face}, $v_5$ does not belong to any $3$-face.

(a). Suppose that $m_3(v)=2$, and let $f_1$ and $f_2$ be $3$-faces incident to $v$. Assume for a contradiction that $v$ has four $4$-neighbours, and so $v_1,v_2,v_3,v_4$ are $4$-vertices. If $f_1$ and $f_2$ are adjacent, say $f_1=v_1vv_2$ and $f_2=v_2vv_3$, then we set $G'=G-v+\{v_3v_4,v_4v_5,v_1v_5\}$. If $f_1$ and $f_2$ are non-adjacent, say $f_1=v_1vv_2$ and $f_2=v_3vv_4$, then we set $G'=G-v+\{v_2v_3,v_4v_5,v_1v_5\}$. In both cases, we have $d_2(v)\leq 15$ and $G'$ is proper with respect to $G$.  By minimality, the graph $G'$ has a 2-distance coloring $f$ with $16$ colors.  Since $d_2(v)\leq 15$, we can color $v$ with an available color, a contradiction.

(b). Let $m_3(v)=2$ and let $v$ have a $4$-neighbour. By contradiction, assume that $v$ is not incident to any $5^+$-face, i.e., $v$ has three $4$-faces. Let $f_1$ and $f_2$ be $3$-faces incident to $v$.  
If $f_1$ and $f_2$ are adjacent, say $f_1=v_1vv_2$ and $f_2=v_2vv_3$, then we set $G'=G-v+\{v_1v_5,v_2v_5,v_4v_5\}$, where we note that $v_5$ shares a common neighbour, except $v$, with each of $v_1$ and $v_4$, since $v$ has three $4$-faces.
If $f_1 $ and $f_2$ are non-adjacent, say $f_1=v_1vv_2$ and $f_2=v_3vv_4$, then we set $G'=G-v+\{v_2v_3,v_4v_5,v_1v_5\}$. In both cases, we have $d_2(v)\leq 15$ and $G'$ is proper with respect to $G$.  Similarly as above, we get a contradiction. Thus, $v$ is incident to a $5^+$-face. 


(c). Let $m_3(v)=3$ and let $f_1,f_2,f_3$ be $3$-faces incident to $v$. We may write $f_1=v_1vv_2$, $f_2=v_2vv_3$, and $f_3=v_3vv_4$, since $v_5$ is not incident to any $3$-face by  Lemma \ref{lem:faces-in-3-face}. Assume for a contradiction that $v$ has two $4$-neighbours or all faces incident to $v$ are $4^-$-faces. In both cases, we  have $d_2(v)\leq 15$. Let $G'=G-v+\{v_4v_5,v_5v_1\}$.  Clearly, $G'$ is proper with respect to $G$. Similarly as above, we get a contradiction.  Thus, $n_4(v)\leq 1$, and $v$ is incident to at least one $5^+$-face. 

(d). Let $m_3(v)=3$ and let $f_1,f_2,f_3$ be $3$-faces incident to $v$. We may write $f_1=v_1vv_2$, $f_2=v_2vv_3$, and $f_3=v_3vv_4$, since $v_5$ is not incident to any $3$-face by  Lemma \ref{lem:faces-in-3-face}. Suppose that $v$ has a $4$-neighbour. If $v$ is incident to at most one $5^+$-face, then we would have $d_2(v)\leq 15$.  In such a case, if we set $G'=G-v+\{v_4v_5,v_5v_1\}$, then $G'$ would be proper with respect to $G$. Similarly as above, we get a contradiction. Thus, $v$ is incident to two $5^+$-faces. 
\end{proof}

\begin{lemma}\label{lem:5-vertex-n3-0}
Let $v$ be a $5$-vertex with $n_3(v)= 0$. 
\begin{itemize}
\item[$(a)$] If $m_3(v)=2$, and all neighbours of $v$ are $4$-vertices, then  $v$ is incident to three $5^+$-faces.
\item[$(b)$] If $m_3(v)=3$, then  $v$ has at  most three $4$-neighbours.
\end{itemize}
\end{lemma}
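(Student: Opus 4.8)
The proof follows the same discharging-of-the-counterexample template as the preceding lemmas: in each case we delete $v$, add a carefully chosen set of edges among $N(v)$ to obtain a graph $H=G-v+S$ that is proper with respect to $G$, invoke minimality to get a $2$-distance $16$-coloring of $H$, and then check that $d_2(v)\le 15$ so that $v$ has an available color, contradicting the choice of $G$. The whole content of the argument is therefore to verify, under the stated face/degree hypotheses, that (i) the added edges can be chosen so that every pair of vertices of $G$ at distance $\le 2$ remains at distance $\le 2$ in $H$ (so $f$ on $H$ really is a partial $2$-distance coloring of $G$), and (ii) the resulting bound on $|N_2(v)|$ is at most $15$.

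For part $(a)$, assume $n_3(v)=0$, $m_3(v)=2$, and all of $v_1,\dots,v_5$ are $4$-vertices. If the two incident $3$-faces are adjacent I would write them as $f_1=v_1vv_2$, $f_2=v_2vv_3$; if non-adjacent as $f_1=v_1vv_2$, $f_2=v_3vv_4$. Suppose for contradiction that $v$ is incident to at most two $5^+$-faces, i.e.\ at least one of the three non-$3$-faces at $v$ is a $4$-face. Each $3$-face contributes only one extra second-neighbour of $v$ (the tip vertex) beyond $v_1,\dots,v_5$, and a $4$-face at $v$ forces a shared neighbour of its two $v$-neighbours, again adding at most one. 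Counting $|N_1(v)|=5$, plus at most $4$ second-neighbours through each $v_i$ (the three other neighbours of each $4$-vertex $v_i$), and subtracting the identifications forced by the $3$-faces and the extra $4$-face, I expect to land at $d_2(v)\le 15$; then adding the edges joining the ``outer'' endpoints of consecutive incident faces (exactly the pattern used in Lemma~\ref{lem:5-vertex-three-4-neigh}(a)--(b), e.g.\ $G'=G-v+\{v_3v_4,v_4v_5,v_1v_5\}$ in the adjacent case) keeps $H$ proper, and minimality finishes the case.

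For part $(b)$, assume $n_3(v)=0$ and $m_3(v)=3$, with the $3$-faces $f_1=v_1vv_2$, $f_2=v_2vv_3$, $f_3=v_3vv_4$ as fixed in the paragraph preceding Lemma~\ref{lem:faces-on-5-vertex}. Suppose for contradiction that $v$ has four $4$-neighbours. Then at least four of $v_1,\dots,v_5$ are $4$-vertices, so at most one $v_i$ is a $5$-vertex, and again $d_2(v)$ is dominated by $|N_1(v)|=5$ plus the at-most-$3$ neighbours each of the $4$-vertices and the identifications forced by $f_1,f_2,f_3$; this should give $d_2(v)\le 15$. Setting $G'=G-v+\{v_4v_5,v_5v_1\}$ — the same repair used in Lemma~\ref{lem:5-vertex-three-4-neigh}(c)--(d), which is proper because $v_5$ shares neighbours with $v_1$ and $v_4$ through the faces flanking the run of $3$-faces — and applying minimality yields the contradiction, so $n_4(v)\le 3$.

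The only real obstacle, as in the earlier lemmas, is the careful bookkeeping for $d_2(v)$: one must make sure not to double-count the tips of the $3$-faces or the vertices forced by $4$-faces, and in the borderline subcases one must check that the count is exactly $\le 15$ rather than $16$. I would handle this by the standard estimate $d_2(v)\le \sum_{i}(d(v_i)-1) - (\text{number of coincidences forced by incident small faces})$ and verify it is $\le 15$ in each configuration; the choice of added edges is then dictated, as above, by connecting the free endpoints around $v$ so that all former distance-$\le 2$ pairs through $v$ are preserved.
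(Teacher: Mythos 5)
Your part $(a)$ is essentially the paper's argument: the same count giving $d_2(v)\le 15$ (two $3$-faces plus a forced $4$-face when at most two incident faces are $5^+$), and the same repairs $G'=G-v+\{v_3v_4,v_4v_5,v_5v_1\}$ (adjacent case) resp.\ $G'=G-v+\{v_2v_3,v_4v_5,v_5v_1\}$ (non-adjacent case); since all neighbours are $4$-vertices here, these additions are unproblematic.

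Part $(b)$, however, has a genuine gap. First, you fix the three $3$-faces as the consecutive run $f_1=v_1vv_2$, $f_2=v_2vv_3$, $f_3=v_3vv_4$; since $n_3(v)=0$, nothing forces this, and the configuration with two adjacent $3$-faces plus a separate one ($f_1=v_1vv_2$, $f_2=v_2vv_3$, $f_3=v_4vv_5$) must also be handled (the paper does so with $G'=G-v+\{v_3v_4,v_5v_1\}$). Second, and more seriously, your proposed repair $G'=G-v+\{v_4v_5,v_5v_1\}$ adds two new edges at $v_5$. In Lemma \ref{lem:5-vertex-three-4-neigh}(c)--(d), where this edge set is borrowed from, $v_5$ is a $3$-vertex, so its degree rises only to $4$; here $n_3(v)=0$ and the four $4$-neighbours may well be $v_1,\dots,v_4$, so $v_5$ can be a $5$-vertex, in which case $v_5$ has degree $6$ in $G'$. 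Then $G'$ is no longer a planar graph with maximum degree at most $5$, and the minimality of the counterexample cannot be invoked to $2$-distance $16$-color $G'$, so the contradiction does not follow. This is exactly why the paper instead puts both new edges at a vertex guaranteed to have degree $4$: since $v$ has four $4$-neighbours, one of $v_2,v_3$ is a $4$-vertex (say $v_2$), and the paper takes $G'=G-v+\{v_2v_5,v_2v_4\}$, which preserves properness, planarity, and the degree bound. Your proposal either needs this alternative edge choice (with the accompanying degree remark) or a case split on whether $v_5$ is a $4$- or $5$-vertex; as written, the key subcase fails.
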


\begin{proof} 
%
(a). Let $m_3(v)=2$ and $n_4(v)=5$. Assume to the contrary that $v$ is incident to at most two $5^+$-face. This implies that $v$ is incident to a $4$-face. Clearly, we have $d_2(v)\leq 15$.
If the $3$-faces incident to $v$ are adjacent, say $f_1=v_1vv_2$, $f_2=v_2vv_3$, then we write $G'=G-v+\{v_3v_4,v_4v_5,v_5v_1\}$. If the $3$-faces incident to $v$ are not adjacent, say $f_1=v_1vv_2$, $f_2=v_3vv_4$, then we write $G'=G-v+\{v_2v_3,v_4v_5,v_5v_1\}$. In both cases, $G'$ is proper with respect to $G$. By minimality, the graph $G'$ has a 2-distance coloring $f$ with $16$ colors. Since  $d_2(v)\leq 15$, we can color $v$ with an available color, a contradiction. Thus, $v$ is incident to three $5^+$-faces.


(b). Let $m_3(v)=3$ and  let $f_1,f_2,f_3$ be $3$-faces incident to $v$. By contradiction, suppose that  $v$ has four $4$-neighbours. Then we have $d_2(v)\leq 15$.
If the $3$-faces  incident to $v$ are forming $f_1=v_1vv_2$, $f_2=v_2vv_3$, and $f_3=v_4vv_5$, then we write $G'=G-v+\{v_3v_4,v_5v_1\}$. If the $3$-faces incident to $v$ are forming $f_1=v_1vv_2$, $f_2=v_2vv_3$, and $f_3=v_3vv_4$, then we write $G'=G-v+\{v_2v_5,v_2v_4\}$ where we should note that one of $v_2$ or $v_3$ must be a $4$-vertex since $v$ has four $4$-neighbours, let us assume it is $v_2$ by symmetry. In both cases, $G'$ is proper with respect to $G$.
Similarly as above, we get a contradiction. Thus, $n_4(v)\leq 3$. 
\end{proof}

\begin{lemma}\label{lem:5-vertex-n3-0-m3-4}
Let $v$ be a $5$-vertex with  $n_3(v)= 0$ and $m_3(v)=4$. 
\begin{itemize}
\item[$(a)$] The vertex $v$ has at  most one $4$-neighbour.
\item[$(b)$] If  $v$ has a $4$-neighbour, $v$ is incident to a $5^+$-face.
\item[$(c)$] If $v$ is incident to a $4$-face, there is no edge $v_iv_{i+1}$ for $i\in [4]$ contained in two $3$-faces.
\item[$(d)$] If $v$ is incident to a $5^+$-face, there exists at most one edge $v_iv_{i+1}$ for $i\in [4]$ contained in two $3$-faces. 
\item[$(e)$] If $v$ is incident to a $5^+$-face, and there exists an edge $v_iv_{i+1}$ for $i\in [4]$ contained in two $3$-faces, then all neighbour of $v$ are $5$-vertices.
\end{itemize}
\end{lemma}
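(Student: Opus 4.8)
All five parts will be handled by the same deletion--extension scheme used in the preceding lemmas; the only real work is a uniform upper bound on $d_2(v)$ in each case. Throughout, write $N[v]=N(v)\cup\{v\}$, and let $f_5$ be the face incident to $v$ other than $f_1,\dots,f_4$, that is, the face meeting $v$ along $vv_5$ and $vv_1$. Since $m_3(v)=4$, this $f_5$ is not a $3$-face, so $v_1v_5\notin E(G)$; the four incident $3$-faces put $v_1v_2,v_2v_3,v_3v_4,v_4v_5\in E(G)$; and by $n_3(v)=0$ and Lemma \ref{lem:deg-geq-3}, each $v_i$ is a $4$- or $5$-vertex.

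The engine of the argument is the estimate
\[
d_2(v)\ \le\ \Bigl(\sum_{i=1}^{5}d(v_i)\Bigr)-8 ,
\]
which I would prove once at the start: $N_2(v)$ is the disjoint union of $N(v)$ and $\bigcup_i\bigl(N(v_i)\setminus N[v]\bigr)$, and among the neighbours of $v_i$, at least $v$ together with the consecutive $v_j$'s forced by the $3$-faces lie in $N[v]$ --- two for each of $v_1$ and $v_5$, three for each of $v_2,v_3,v_4$ --- so $|N_2(v)|\le 5+\sum_i d(v_i)-13$. This bound improves by a further $1$ for each vertex forced to lie in two of the sets $N(v_i)\setminus N[v]$: (i) if $f_5$ is a $4$-face, its fourth vertex is a common neighbour of $v_1$ and $v_5$; and (ii) for each $i\in[4]$ for which the edge $v_iv_{i+1}$ lies in a $3$-face distinct from $f_i$, the apex of that face is a common neighbour of $v_i$ and $v_{i+1}$. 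If any such auxiliary vertex happens to lie in $N(v)$, or if two of them coincide, the count only decreases further, so one may assume them distinct and outside $N[v]$.

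With this in hand, each part reduces to verifying $d_2(v)\le 15$ under the negation of its conclusion. In (a), two $4$-neighbours give $\sum_i d(v_i)\le 23$, hence $d_2(v)\le 15$. In (b), the negation forces $f_5$ to be a $4$-face, so with a $4$-neighbour $\sum_i d(v_i)\le 24$ and, using reduction (i), $d_2(v)\le 15$. In (c), $f_5$ a $4$-face together with one edge $v_iv_{i+1}$ lying in a second $3$-face give reductions (i) and (ii), so $d_2(v)\le 25-8-2=15$. In (d), two such edges give reduction (ii) twice, so again $d_2(v)\le 25-8-2=15$. In (e), one such edge gives reduction (ii) and a $4$-neighbour gives $\sum_i d(v_i)\le 24$, so $d_2(v)\le 24-8-1=15$. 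In every case I then set $G'=G-v+v_1v_5$, drawing the new edge inside the face of $G-v$ created by deleting $v$ (all of $v_1,\dots,v_5$ lie on its boundary, so $G'$ is planar); since $v_1v_2v_3v_4v_5v_1$ is a cycle in $G'$, every pair $v_i,v_j$ is within distance $2$ there, so $G'$ is proper with respect to $G$. By minimality $G'$ admits a $2$-distance $16$-coloring, and since $v$ has at most $d_2(v)\le 15$ forbidden colors, it extends to $G$ --- a contradiction.

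The one point that needs care is the additivity of the reductions in (c) and (d): I must check that the two common neighbours arising there are distinct --- so that they subtract $2$ rather than $1$ --- and that when they do coincide, the single repeated vertex then lies in at least three of the sets $N(v_i)\setminus N[v]$ and hence still lowers the bound by at least $2$. Everything else (planarity and properness of $G'$, and the final extension) is routine and identical to the earlier lemmas, so I would not repeat those verifications in full.
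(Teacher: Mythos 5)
Your proposal is correct and follows essentially the same route as the paper: in each part the negated hypothesis forces $d_2(v)\le 15$, and then one deletes $v$, adds $v_1v_5$ inside the resulting face, and extends a $16$-coloring of the smaller graph by minimality — the paper simply asserts the bound $d_2(v)\le 15$ in each case, whereas you make the counting explicit (and your treatment of coinciding auxiliary vertices checks out). The only slip is the claim that $f_5$ not being a $3$-face forces $v_1v_5\notin E(G)$ — the edge could exist without bounding that face — but this is harmless, since adding an already present edge changes nothing and the rest of your argument does not use it.
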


\begin{proof}
Let $f_1,f_2,f_3,f_4$ be $3$-faces incident to $v$, say $f_1=v_1vv_2$, $f_2=v_2vv_3$, $f_3=v_3vv_4$, and $f_3=v_4vv_5$.

(a). By contradiction, assume that $v$ has two $4$-neighbours, and so $d_2(v)\leq 15$. If we set $G'=G-v+\{v_5v_1\}$, then $G'$ is proper with respect to $G$. By minimality, the graph $G'$ has a 2-distance coloring $f$ with $16$ colors. Since $v$ has at most $15$ forbidden colors, we can color $v$ with an available color, a contradiction. Thus, $n_4(v)\leq 1$. 

(b). Suppose that $v$ has a $4$-neighbour, and  $v$ is incident to a $4$-face. Then we have  $d_2(v)\leq 15$. Similarly as above, we get a contradiction.

(c). Suppose that $v$ is incident to a $4$-face. If there is an edge $v_iv_{i+1}$ for $i\in [4]$ contained in two $3$-faces, then we have again $d_2(v)\leq 15$. Similarly as above, we get a contradiction.

(d). Assume that  $v$ is incident to a $5^+$-face. If there exist two edges $v_iv_{i+1}$  and $v_jv_{j+1}$ for $i,j\in [4]$ with $i\neq j$  contained in two $3$-faces, then we have again $d_2(v)\leq 15$. Similarly as above, we get a contradiction.

(e). Suppose that $v$ is incident to a $5^+$-face, and there exists an edge $v_iv_{i+1}$ for $i\in [4]$ contained in two $3$-faces. If $v$ has also a $4$-neighbour, then we have again $d_2(v)\leq 15$. Similarly as above, we get a contradiction. 
\end{proof}



Let us define some special vertices. Suppose that $v$ is a $5$-vertex, and let $v_1, v_2, \ldots, v_5$ be the neighbours of $v$ in clockwise order. If $m_3(v) = 4$, and $v$ is incident to a $4$-face (resp. a $5^+$-face), we refer to $v$ as a \emph{bad vertex} (resp. a \emph{semi-bad vertex}). The other special vertices are as follows: \medskip


We call $v$ as a \emph{strong vertex}  if 
\begin{itemize}
\item $m_3(v)=3$,
\item the $3$-faces incident to $v$ are forming $f_1=v_1vv_2$, $f_2=v_2vv_3$, and $f_3=v_4vv_5$,
\item $v_2$ is a bad or semi-bad vertex,
\item $v$ is incident to a $5^+$-face.
\end{itemize}

We call $v$ as a \emph{good vertex} if 
\begin{itemize}
\item $m_3(v)= 3$,
\item the $3$-faces incident to $v$ are forming $f_1=v_1vv_2$, $f_2=v_2vv_3$, and $f_3=v_3vv_4$,
\item $v_2$ is a semi-bad vertex,
\item each of $v_1v_2$ and $v_2v_3$ is contained in two $3$-faces. 
\end{itemize}

We call $v$ as a \emph{support vertex}  if 
\begin{itemize}
\item $m_3(v)=2$,
\item $v$ is incident to two adjacent $3$-faces forming $f_1=v_1vv_2,$ and $f_2=v_2vv_3$,
\item $v_2$ is a bad or semi-bad vertex.
\end{itemize}

We illustrate these special vertices in  Figures \ref{fig:bad5} and  \ref{fig:others} with the assistance of the results in Remark \ref{rem:specil vertices} and Lemma \ref{lem:deger}. Notice that all $v_i$'s, in Figure \ref{fig:bad5}(a), are $5$-vertices while $v_2,v_3,v_4$, in Figure \ref{fig:bad5}(b), are  $5$-vertices by Lemma \ref{lem:deger}(a). Additionally, one of $v_1$ or $v_5$ in Figure \ref{fig:bad5}(b) is  a $5$-vertex and the other is a $4^+$-vertex by Remark \ref{rem:specil vertices}(b).

Moving on to the other special vertices, in Figures \ref{fig:others}(a),(b) and (c), the vertex $v_2$ is either a bad or a semi-bad vertex, implying that $d(v_2) = 5$ and $m_4(v_2) = 4$. Furthermore, in Figures \ref{fig:others}(a) and \ref{fig:others}(c), $v_3$ is a $5$-vertex (resp. $4^+$-vertex) if $v_2$ is a bad (resp. semi-bad) vertex by Remark \ref{rem:specil vertices}.  In Figure \ref{fig:others}(b), all neighbours of $v_2$ are $5$-vertices by Lemma \ref{lem:5-vertex-n3-0-m3-4}(e).

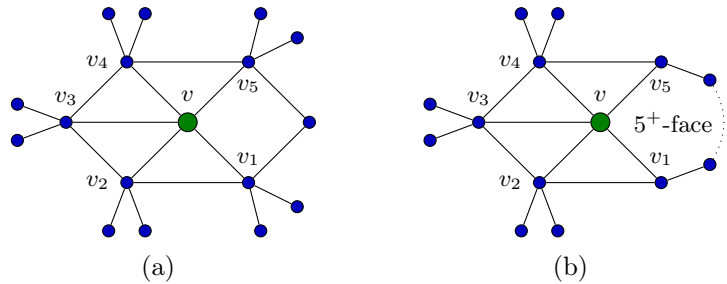
\begin{figure}[htb]
\centering   
\subfigure[]{
\label{fig:bad5-a}
\begin{tikzpicture}[scale=.8]
\node [nodr] at (0,0) (v) [label=above: {\scriptsize $v$}] {};
\node [nod2] at (1,-1) (v1) [label=above:{\scriptsize $v_1$}] {}
	edge  (v);		
\node [nod2] at (-1,-1) (v2) [label=left:{\scriptsize $v_2$}] {}
	edge [] (v)
	edge [] (v1);
\node [nod2] at (-2,0) (v3) [label=above:{\scriptsize $v_3$}]  {}
	edge [] (v)
	edge [] (v2);
\node [nod2] at (-1,1) (v4) [label=left:{\scriptsize $v_4$}] {}
	edge [] (v)
	edge [] (v3);	
\node [nod2] at (1,1) (v5) [label=below:{\scriptsize $v_5$}] {}
	edge [] (v)
	edge [] (v4);
\node [nod2] at (1.2,-1.8) (v11)  {}
	edge  (v1);	
\node [nod2] at (1.8,-1.4) (v12)  {}
	edge  (v1);	
\node [nod2] at (2,0) (v13)  {}
	edge  (v1)
	edge  (v5);	
\node [nod2] at (-.7,-1.8) (v21) {}
	edge [] (v2);
\node [nod2] at (-1.3,-1.8) (v22) {}
	edge [] (v2);
\node [nod2] at (-2.8,-.3) (v31) {}
	edge [] (v3);
\node [nod2] at (-2.8,.3) (v32) {}
	edge [] (v3);
\node [nod2] at (-.7,1.8) (v41) {}
	edge [] (v4);
\node [nod2] at (-1.3,1.8) (v42) {}
	edge [] (v4);	
\node [nod2] at (1.2,1.8) (v51)  {}
	edge  (v5);	
\node [nod2] at (1.8,1.4) (v52)  {}
	edge  (v5);				
\end{tikzpicture}  }\hspace*{1cm}
\subfigure[]{
\label{fig:bad5-b}
\begin{tikzpicture}[scale=.8]
\node [nodr] at (0,0) (v) [label=above: {\scriptsize $v$}] {};
\node [nod2] at (1,-1) (v1) [label=above:{\scriptsize $v_1$}] {}
	edge  (v);		
\node [nod2] at (-1,-1) (v2) [label=left:{\scriptsize $v_2$}] {}
	edge [] (v)
	edge [] (v1);
\node [nod2] at (-2,0) (v3) [label=above:{\scriptsize $v_3$}]  {}
	edge [] (v)
	edge [] (v2);
\node [nod2] at (-1,1) (v4) [label=left:{\scriptsize $v_4$}] {}
	edge [] (v)
	edge [] (v3);	
\node [nod2] at (1,1) (v5) [label=below:{\scriptsize $v_5$}] {}
	edge [] (v)
	edge [] (v4);
\node [nod2] at (1.8,-.7) (v13)  {}
	edge  (v1);

\node [nod2] at (-.7,-1.8) (v21) {}
	edge [] (v2);
\node [nod2] at (-1.3,-1.8) (v22) {}
	edge [] (v2);

\node [nod2] at (-2.8,-.3) (v31) {}
	edge [] (v3);
\node [nod2] at (-2.8,.3) (v32) {}
	edge [] (v3);

\node [nod2] at (-.7,1.8) (v41) {}
	edge [] (v4);
\node [nod2] at (-1.3,1.8) (v42) {}
	edge [] (v4);	

\node [nod2] at (1.8,.7) (v53)  {}
	edge  (v5)
	edge [bend left,dotted]  (v13);	

\node at (1.2,0) (asd)     {\scriptsize $5^+$-face} ;	
\end{tikzpicture}  }
\caption{$(a)$ A bad vertex. \ $(b)$ A semi-bad vertex.}
\label{fig:bad5}
\end{figure}

\begin{figure}[htb]
\centering   
\subfigure[]{
\label{fig:bad5-strong}
\begin{tikzpicture}[scale=.8]
\node [nodr] at (-2,0) (v) [label=above:{\scriptsize $v$}]  {};
\node [nod2] at (-1,-1) (v1) [label=above:{\scriptsize $v_3$}] {}
	edge [] (v);
\node [nod2] at (0,0) (v2) [label=above: {\scriptsize $v_2$}] {}
	edge  (v)
	edge  (v1);
\node [nod2] at (-1,1) (v3) [label=below:{\scriptsize $v_1$}] {}
	edge [] (v)
	edge [] (v2);	
\node [nod2] at (1,1) (u5)  {}
	edge [] (v2)
	edge [] (v3);

	
\node [nod2] at (-3.2,-.6) (v31) [label=below:{\scriptsize $v_4$}] {}
	edge [] (v);
\node [nod2] at (-3.2,.6) (v32) [label=above:{\scriptsize $v_5$}] {}
	edge [] (v31)
	edge [] (v);
\node [nod2] at (-.7,1.8) (v41) {}
	edge [] (v3);
\node [nod2] at (-1.3,1.8) (v42) {}
	edge [] (v3);	
\node at (0,-1.8) (asd)     {} ;				
\end{tikzpicture}  }\hspace*{.5cm}
\subfigure[]{
\label{fig:bad5-good}
\begin{tikzpicture}[scale=.8]
\node [nodr] at (-2,0) (v) [label=above:{\scriptsize $v$}]  {};
\node [nod2] at (-1,-1) (v1) [label=above:{\scriptsize $v_3$}] {}
	edge [] (v);
\node [nod2] at (0,0) (v2) [label=above: {\scriptsize $v_2$}] {}
	edge  (v)
	edge  (v1);
\node [nod2] at (1,-1) (u1)  {}
	edge  (v1)
	edge  (v2);		
\node [nod2] at (-1,1) (v3) [label=below:{\scriptsize $v_1$}] {}
	edge [] (v)
	edge [] (v2);	
\node [nod2] at (1,1) (u5)  {}
	edge [] (v2)
	edge [] (v3);

\draw[dotted] (u5) .. controls (2.5,0) .. (u1);
\node at (1.2,0) (asd)     {\scriptsize $5^+$-face} ;	
	
\node [nod2] at (-.7,-1.8) (v21) {}
	edge [] (v1);
\node [nod2] at (-2.1,-1.3) (v31) [label=below:{\scriptsize $v_4$}] {}
	edge [] (v)
	edge [] (v1);
\node [nod2] at (-3.2,0) (v32) [label=above:{\scriptsize $v_5$}] {}
	edge [] (v);
\node [nod2] at (-.7,1.8) (v41) {}
	edge [] (v3);
\node [nod2] at (-1.3,1.8) (v42) {}
	edge [] (v3);				
\end{tikzpicture}  }\hspace*{.5cm}
\subfigure[]{
\label{fig:bad5-support}
\begin{tikzpicture}[scale=.8]
\node [nodr] at (-2,0) (v) [label=above:{\scriptsize $v$}]  {};
\node [nod2] at (-1,-1) (v1) [label=above:{\scriptsize $v_3$}] {}
	edge [] (v);
\node [nod2] at (0,0) (v2) [label=below: {\scriptsize $v_2$}] {}
	edge  (v)
	edge  (v1);	
\node [nod2] at (-1,1) (v3) [label=below:{\scriptsize $v_1$}] {}
	edge [] (v)
	edge [] (v2);	
\node [nod2] at (1,1) (u5)  {}
	edge [] (v2)
	edge [] (v3);
	
\node [nod2] at (-3.2,-.6) (v31) [label=below:{\scriptsize $v_4$}] {}
	edge [] (v);
\node [nod2] at (-3.2,.6) (v32) [label=above:{\scriptsize $v_5$}] {}
	edge [] (v);
\node [nod2] at (-.7,1.8) (v41) {}
	edge [] (v3);
\node [nod2] at (-1.3,1.8) (v42) {}
	edge [] (v3);	
\node at (0,-1.8) (asd)     {} ;					
\end{tikzpicture}  }
\caption{$(a)$ A strong vertex. \ $(b)$ A good vertex. \ $(c)$ A support vertex.}
\label{fig:others}
\end{figure}
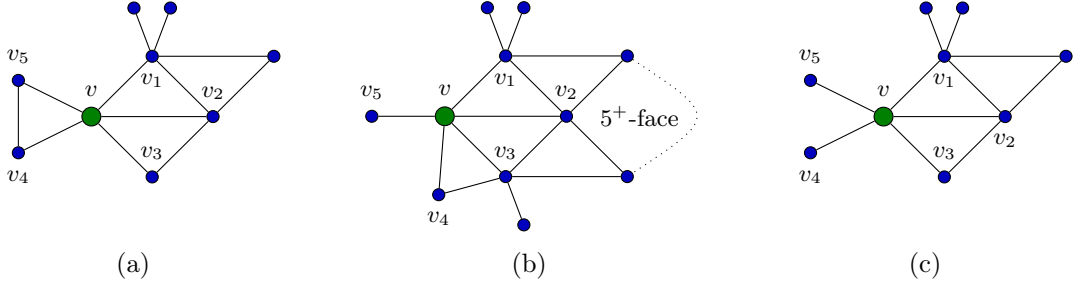

Our next aim is to find the structural properties of the special vertices defined earlier. Recall that a bad and a semi-bad vertex has $m_3(v)=4$. Therefore, based on this assumption, we can conclude that the $3$-faces incident to $v$ are $f_1=v_1vv_2$, $f_2=v_2vv_3$, $f_3=v_3vv_4$, and $f_3=v_4vv_5$. Additionally, a bad or a semi-bad vertex cannot have any $3$-neighbour by Lemma \ref{lem:faces-in-3-face}.

\begin{remark}\label{rem:specil vertices} 
Let $v$ be a $5$-vertex.
\begin{itemize}
\item[$(a)$] If $v$ is a bad vertex, then all neighbours of $v$ are $5$-vertices, and no edge $v_iv_{i+1}$ for $i\in [4]$ is contained in two $3$-faces.

\item[$(b)$] If $v$ is a semi-bad vertex, then $n_3(v)=0$ and $n_5(v)\geq 4$. 
%
%
\end{itemize}
\end{remark}

\begin{proof} 
(a). Suppose that $v$ is a bad vertex. It follows from Lemma \ref{lem:faces-in-3-face}  that all $v_i$'s are $4^+$-vertices. By contradiction, assume that $v$ has a $4$-neighbour. Since $v,v_1,v_5$ are contained in a $4$-face, we have $d_2(v)\leq 15$. If we set $G'=G-v+\{v_5v_1\}$, then  $G'$ is proper with respect to $G$. By minimality, the graph $G'$ has a 2-distance coloring $f$ with $16$ colors. Since $v$ has at most $15$ forbidden colors, we can color $v$ with an available color, a contradiction. Thus, $n_5(v)=5$. The remaining of the claim follows from Lemma \ref{lem:5-vertex-n3-0-m3-4}(c).


(b). Suppose that $v$ is a semi-bad vertex. Note that, by Lemma \ref{lem:faces-in-3-face} and Lemma \ref{lem:5-vertex-n3-0-m3-4}(a), $v$ has neither a $3$-neighbour nor two $4$-neighbours.
\end{proof}

%

\begin{remark}\label{rem:bad-no-bad}
A bad vertex cannot have any bad or semi-bad neighbour.
\end{remark}
\begin{proof}
Let $v$ be a bad vertex. 
Since there is no edge $v_iv_{i+1}$ for $i\in [4]$ contained in two $3$-faces by Lemma \ref{lem:5-vertex-n3-0-m3-4}(c),  none of $v_i$'s can be incident to four $3$-faces, i.e., we have $m_3(v_i)\leq 3$ for each $v_i\in N(v)$.  This implies that no neighbour of $v$ is a bad or a semi-bad vertex. So, the claim holds.
\end{proof}

\begin{lemma}\label{lem:deger}
Let $v$ be a bad or a semi-bad vertex (see Figure \ref{fig:bad5}). Then the followings hold.
\begin{itemize}
\item[$(a)$] $v_2,v_3,v_4$ are $5$-vertices.
\item[$(b)$] If $v$ is a bad vertex, then $v_3$ is either a support or a strong vertex.
\item[$(c)$] If $v$ is a semi-bad vertex, then $v_3$ is  either a support or a strong or a good vertex.
\end{itemize}
\end{lemma}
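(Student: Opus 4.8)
The strategy is to analyze the neighbourhood of $v$ face by face, using the facts already established about bad and semi-bad vertices. Recall that since $v$ has $m_3(v)=4$, the four $3$-faces are $f_1=v_1vv_2$, $f_2=v_2vv_3$, $f_3=v_3vv_4$, $f_4=v_4vv_5$, and by Lemma~\ref{lem:faces-in-3-face} none of the $v_i$'s is a $3$-vertex; also, by Remark~\ref{rem:specil vertices}(a) a bad $v$ has $n_5(v)=5$, while by Remark~\ref{rem:specil vertices}(b) a semi-bad $v$ has $n_5(v)\geq 4$ with the single possible $4$-neighbour lying in $\{v_1,v_5\}$. For part~(a), I would simply observe that $v_2$, $v_3$, $v_4$ are each incident to two of the $3$-faces $f_1,\dots,f_4$ (namely $v_2\in f_1\cap f_2$, $v_3\in f_2\cap f_3$, $v_4\in f_3\cap f_4$), hence $m_3(v_i)\geq 2$ and $n_3(v_i)=0$ for $i\in\{2,3,4\}$; combined with the fact that each $v_i$ is already known to be a $5^-$-vertex via $n_5(v)\ge 4$ in the bad case (all five), and in the semi-bad case the only possible $4$-vertex is $v_1$ or $v_5$, this forces $d(v_2)=d(v_3)=d(v_4)=5$. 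Actually the cleanest phrasing: $v$ is a $5$-vertex all of whose neighbours are $4^+$, and $v_2,v_3,v_4\notin\{v_1,v_5\}$, so by Remark~\ref{rem:specil vertices} they are $5$-vertices.

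For parts~(b) and~(c) the real work is to show $v_3$ falls into one of the named categories. The vertex $v_3$ is a $5$-vertex by (a), incident to the two adjacent $3$-faces $f_2=v_2vv_3$ and $f_3=v_3vv_4$, and it has $n_3(v_3)=0$ since $v_3$ lies on a $3$-face (Lemma~\ref{lem:faces-in-3-face}). Writing $v_3$'s neighbours in clockwise order as $v, v_2, x, y, v_4$ (using the planar embedding so that $f_2,f_3$ sit on consecutive sides at $v_3$), I would let $m_3(v_3)=t\geq 2$ and case on $t\in\{2,3,4\}$:
\begin{itemize}
\item If $t=4$: then $v_3$ is itself bad or semi-bad, so $v_3$ has a bad or semi-bad neighbour, namely $v$. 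But $v$ is bad (case (b)) — contradicting Remark~\ref{rem:bad-no-bad} — or $v$ is semi-bad (case (c)), and again by the same argument applied to $v_3$ one gets a forbidden adjacency of two $m_3=4$ vertices through the edge $vv_3$, which is contained in $f_2$ and in a second $3$-face; this contradicts Lemma~\ref{lem:5-vertex-n3-0-m3-4}(c)/(d) at $v_3$ or at $v$. So $t\leq 3$.
\item If $t=3$: the third $3$-face at $v_3$ is either $v_2v_3x$ (adjacent to $f_2$ on the far side) or $xv_3y$ or $yv_3v_4$. One checks that $m_3(v_3)=3$ with the three $3$-faces arranged as $f_2,f_3$ plus one more forces, up to the symmetry $v_2\leftrightarrow v_4$, the configuration in which the three $3$-faces are consecutive: $v_2v_3x,\ v_2v_3\!\cdots$ — more precisely the pattern $g_1=v_2v_3x$, $g_2=f_2=v v_3 v_2$? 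No: I must be careful, $f_2=v_2vv_3$ and $f_3=v_3vv_4$ are already two of them and they are adjacent along $vv_3$. Adding a third $3$-face adjacent to this block (sharing $v_3v_2$ or $v_3v_4$) puts $v_3$ in the "$f_1=u_1v_3u_2,f_2=u_2v_3u_3,f_3=u_3v_3u_4$" pattern; adding one non-adjacent to the block puts $v_3$ in the "two adjacent plus one separate" pattern. In the first sub-case, since $v=u_2$ (or $u_3$) is a neighbour of $v_3$ with $m_3(v)=4$, i.e. $v$ is bad or semi-bad, and (for the good-vertex definition) we need the relevant two edges at $v_3$ to each lie in two $3$-faces and $v$ to be semi-bad: this matches the \emph{good vertex} definition when $v$ is semi-bad, but when $v$ is bad, Lemma~\ref{lem:5-vertex-n3-0-m3-4}(c) forbids an edge $v_iv_{i+1}$ at $v$ in two $3$-faces, which rules this sub-case out, leaving only the configuration that makes $v_3$ a \emph{strong vertex} (the "two adjacent $3$-faces plus one separated" pattern with $v$ playing the role of the middle vertex $v_2$ of the strong vertex's notation, which is bad or semi-bad, and $v_3$ incident to a $5^+$-face since $m_3(v_3)=3<5$ and $v_3$ has $5$ incident faces, two of which are not $3$-faces — wait, $5-3=2$ non-$3$-faces, so at least one is a $5^+$ unless both are $4$-faces; I need Lemma~\ref{lem:5-vertex-n3-0-m3-4} or Lemma~\ref{lem:5-vertex-three-4-neigh}(c) style reasoning to guarantee the $5^+$-face). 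So $t=3$ yields "strong" always, and additionally "good" is possible only in the semi-bad case.
\item If $t=2$: then the only $3$-faces at $v_3$ are $f_2$ and $f_3$, which are adjacent, with middle vertex $v$ (which is bad or semi-bad), so $v_3$ satisfies the \emph{support vertex} definition.
\end{itemize}

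The main obstacle I anticipate is the $t=3$ analysis: correctly matching the planar arrangement of the three $3$-faces at $v_3$ to the precise notational conventions in the definitions of \emph{strong} and \emph{good} vertex (which fix the labelling $f_1=v_1vv_2$, $f_2=v_2vv_3$, $f_3=v_4vv_5$ resp. $f_3=v_3vv_4$), and ruling out the wrong arrangement in the bad case via Lemma~\ref{lem:5-vertex-n3-0-m3-4}(c), while in the semi-bad case invoking Lemma~\ref{lem:5-vertex-n3-0-m3-4}(d),(e) to control how many edges at $v$ lie in two $3$-faces and to force $n_5$ of $v$'s neighbours. I would also need to verify in each surviving case that $v_3$ is incident to the required number of $5^+$-faces — for "support" none is required, for "strong" one, for "good" none beyond $v_2=v$ being semi-bad — using that $v_3$ is a $5$-vertex with $m_3(v_3)\le 3$ together with the relevant part of Lemma~\ref{lem:5-vertex-three-4-neigh} or Lemma~\ref{lem:5-vertex-n3-0}. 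Everything else is bookkeeping with the embedding and the already-proved forbidden configurations.
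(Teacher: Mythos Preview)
Your overall case split on $m_3(v_3)\in\{2,3,4\}$ matches the paper's, and your treatment of $t=2$ (support) and $t=4$ (ruled out via Lemma~\ref{lem:5-vertex-n3-0-m3-4}(c),(d) applied at $v_3$, since both $v_2v$ and $vv_4$ lie in two $3$-faces) is correct. Two genuine gaps remain.

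\textbf{Part (a), semi-bad case.} Your argument is circular: Remark~\ref{rem:specil vertices}(b) gives only $n_5(v)\ge 4$, not that the possible $4$-neighbour lies in $\{v_1,v_5\}$; the latter is exactly what (a) asserts. The paper proves (a) by a new reducibility step: if, say, $v_2$ is a $4$-vertex with fourth neighbour $x$, set $G'=G-v_2+\{v_1x,xv_3\}$ and check $d_2(v_2)\le 15$ (the decisive overlap being $v_4\in N(v)\cap N(v_3)$). A shorter alternative you nearly reach but do not state: such a $v_2$ would have $m_3(v_2)=2$, so Proposition~\ref{prop:4-vertex-no-common} forbids any edge of $G[N(v_2)]$ from lying in two $3$-faces, yet $vv_3\in G[N(v_2)]$ lies in both $f_2$ and $f_3$; the analogous contradiction works for $v_3$ (via $vv_2$) and $v_4$ (via $vv_3$).

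\textbf{The $5^+$-face in the strong-vertex sub-case.} When $m_3(v_3)=3$ with the third $3$-face $xv_3y$ separated from the block $f_2,f_3$, you must show $v_3$ is incident to a $5^+$-face to make it strong. Neither lemma you cite applies: Lemma~\ref{lem:5-vertex-n3-0-m3-4} concerns $m_3=4$ vertices, and Lemma~\ref{lem:5-vertex-three-4-neigh}(c) requires $n_3(v_3)=1$, whereas here $n_3(v_3)=0$. The paper supplies a fresh reducibility argument: if the two remaining faces at $v_3$ were both $4$-faces, then $d_2(v_3)\le 15$ (since $v$ already shares two common neighbours with each of $v_2,v_4$, and the $4$-faces furnish two further overlaps), and $G'=G-v_3+\{v_2x,yv_4\}$ is proper, a contradiction. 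This step is the crux of (b) and of the ``strong'' branch of (c); without it the $t=3$ case does not close.
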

\begin{proof} 
(a). By Remark \ref{rem:specil vertices}(a), all $v_i$'s are $5$-vertices when $v$ is a bad vertex. So we consider only semi-bad case. Let $v$ be a semi-bad vertex. It follows from Lemma \ref{lem:faces-in-3-face} that no neighbour of $v$ is a $3$-vertex, i.e., all $v_i$'s  are $4^+$-vertices. Assume for a contradiction that there exists $v_i \in \{v_2,v_3,v_4\}$ such that it is a $4$-vertex, without loss of generality, say $v_i=v_2$ and let $x\in N(v_2)\setminus \{v,v_1,v_3\}$. We observe that $d_2(v_2)\leq 15$. If we set $G'=G-v_2+vx$, then  $G'$ is proper with respect to $G$. By minimality, the graph $G'$ has a 2-distance coloring with $16$ colors. Since $d_2(v_2)\leq 15$, we can color $v_2$ with an available color, a contradiction. Thus, $v_2,v_3,v_4$ are $5$-vertices.

(b). Let $v$ be a bad vertex. By (a), $v_3$ is a $5$-vertex, and so we denote by $v_4,v,v_2,x,y$  the neighbours of $v_3$ in clockwise order.  If $m_3(v_3)=2$, then $v_3$ would be a support vertex. Notice that the case $m_3(v_3)=4$ is not possible by Remark \ref{rem:bad-no-bad}.  Thus, we may suppose that $m_3(v_3)=3$, and so $xy\in E(G)$ by Remark \ref{rem:specil vertices}(a). 
If all faces incident to $v_3$ are $4^-$-faces, then we have $d_2(v_3)\leq 15$ since $v$ has two common neighbours with each of $v_2,v_4$. In such a case, we set $G'=G-v_3+\{v_2x,yv_4\}$.  Similarly as above, we get a contradiction. We therefore conclude that $v_3$ is incident to a $5^+$-face, and so $v_3$ becomes a strong vertex.

(c). Let $v$ be a semi-bad vertex. The vertex $v_3$ is a $5$-vertex by (a), and denote by $v_4,v,v_2,x,y$ the neighbours of $v_3$ in clockwise order.  If $m_3(v_3)=2$, then $v_3$ is a support vertex by its definition. So, we may suppose that $m_3(v_3)\geq 3$. We remark that if $m_3(v_3)=4$, then $v_3$ becomes a bad or a semi-bad vertex, however, both $v_2v$ and $vv_4$ would be contained in two $3$-faces, which contradicts Lemma \ref{lem:5-vertex-n3-0-m3-4}(c)-(d). Therefore we may assume that  $m_3(v_3)= 3$. If none of the edges $v_2v_3, v_3v_4$ is contained in two $3$-faces, then similarly as above, we conclude that $v_3$ is incident to a $5^+$-face, and so $v_3$ is a strong vertex.
If only one of the edges $v_2v_3, v_3v_4$ is contained in two $3$-faces, by symmetry, we may assume it is $v_2v_3$, then $v_3$ becomes a good vertex. \medskip
\end{proof}

\begin{lemma}\label{lem:strong-properties}
Let $v$ be a strong vertex with $3$-faces $f_1=v_1vv_2$, $f_2=v_2vv_3$ and $f_3=v_4vv_5$. 
\begin{itemize}
\item[$(a)$] If $v$ has two $4$-neighbours, then $v$ is incident to two $5^+$-face.
\item[$(b)$] If $v_2$ has two common neighbours with each of $v_1,v_3$, then 
either $v$ has no $4$-neighbour or $v$ is incident to  two $5^+$-face. 
\end{itemize}
\end{lemma}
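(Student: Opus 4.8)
The plan is to run the same scheme as in the previous lemmas of this section. Assuming a conclusion fails, I would delete $v$, add two edges to obtain a planar graph $G'$ that is proper with respect to $G$ and has $|V(G')|+|E(G')|<|V(G)|+|E(G)|$, and then show $d_2(v)\le 15$, so that the $2$-distance $16$-colouring of $G'$ provided by minimality extends to $G$---a contradiction. Thus the real content is to understand the faces around $v$ and to bound $d_2(v)$.

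First, the structure common to both parts. Since $v$ is a strong vertex, $m_3(v)=3$ with $3$-faces $f_1=v_1vv_2$, $f_2=v_2vv_3$, $f_3=v_4vv_5$, so the two remaining faces incident to $v$ are the face $A$ between $vv_3$ and $vv_4$ and the face $B$ between $vv_5$ and $vv_1$, both $4^+$-faces; by the definition of a strong vertex at least one of $A,B$ is a $5^+$-face. Every $v_i$ lies on a $3$-face, so $n_3(v)=0$ by Lemma~\ref{lem:faces-in-3-face}, whence every $v_i$ has degree $4$ or $5$. Finally, $v_2$ is bad or semi-bad, so $d(v_2)=5$ and $m_3(v_2)=4$; since $f_1,f_2$ are two consecutive $3$-faces of $v_2$ sharing the edge $vv_2$, at least one of the edges $v_1v_2,v_2v_3$ lies on a further $3$-face of $G$, and hence $v_2$ has a common neighbour other than $v$ with $v_1$ or with $v_3$.

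For the auxiliary graph I take $G'=G-v+\{v_3v_4,v_5v_1\}$, deleting from this list any edge already present in $G$. Removing $v$ merges its five incident faces into one face on whose boundary $v_1,v_2,v_3,v_4,v_5$ occur in this cyclic order, so the two chords $v_3v_4$ and $v_5v_1$ can be drawn inside that face without crossing; thus $G'$ is planar, and plainly $|V(G')|+|E(G')|<|V(G)|+|E(G)|$. In $G'$ the set $\{v_1,\ldots,v_5\}$ carries all five edges of the $5$-cycle $v_1v_2v_3v_4v_5v_1$ (three coming from $f_1,f_2,f_3$ and the two added ones), so any two of $v_1,\ldots,v_5$ are at distance at most $2$ in $G'$; since the only pairs of $V(G)\setminus\{v\}$ whose distance can grow when $v$ is removed are pairs in $N(v)$, the graph $G'$ is proper with respect to $G$. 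Consequently a $2$-distance $16$-colouring of $G'$ restricts to a valid colouring of $G-v$, and as soon as $d_2(v)\le 15$ there is a free colour for $v$, contradicting the choice of $G$.

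It remains to establish $d_2(v)\le 15$ under the hypotheses of each part, which is the step I expect to require care. For (a), assume $v$ has two $4$-neighbours and is not incident to two $5^+$-faces: then exactly one of $A,B$ is a $5^+$-face and the other a $4$-face (not a $3$-face, as $m_3(v)=3$), whose fourth vertex is a common neighbour, other than $v$, of two consecutive vertices of $N(v)$. This vertex, the common neighbour furnished in the second paragraph, the edges $v_1v_2,v_2v_3,v_4v_5$ of $G[N(v)]$, and $\sum_{i=1}^5 d(v_i)\le 23$ (as $n_3(v)=0$ and $v$ has two $4$-neighbours) then yield $d_2(v)\le 15$ by a direct count of the second neighbourhood. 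For (b), assume $v_2$ has common neighbours $z_1\in N(v_1)\cap N(v_2)\setminus\{v\}$ and $z_3\in N(v_2)\cap N(v_3)\setminus\{v\}$, that $v$ has a $4$-neighbour, and---for contradiction---that $v$ is not incident to two $5^+$-faces; then $v$ is again incident to a $4$-face supplying a common neighbour of two consecutive vertices of $N(v)$, and with $z_1,z_3$ this gives three extra coincidences in the second neighbourhood, so $\sum_{i=1}^5 d(v_i)\le 24$ still forces $d_2(v)\le 15$. The care lies in the degenerate cases: if two forced common neighbours coincide, the common vertex has at least three neighbours in $N(v)$ and the gain is undiminished; if one of them lies in $N(v)$ rather than at distance $2$ from $v$, it instead adds edges to $G[N(v)]$, again lowering $d_2(v)$. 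Everything else---planarity and properness of $G'$ and the colour extension---is uniform and routine.
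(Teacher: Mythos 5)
Your proposal is correct and follows essentially the same route as the paper: the same auxiliary graph $G'=G-v+\{v_3v_4,v_1v_5\}$, the same properness/minimality argument, and a contradiction once $d_2(v)\leq 15$. The only difference is that you spell out the count giving $d_2(v)\leq 15$ (three edges in $G[N(v)]$, the common neighbour forced by $v_2$ being bad or semi-bad, resp.\ the two common neighbours in (b), and the fourth vertex of the $4$-face), which the paper simply asserts; your accounting, including the degenerate cases, is sound.
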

\begin{proof}
(a). Let $v$ be a strong vertex with $3$-faces $f_1=v_1vv_2$, $f_2=v_2vv_3$ and $f_3=v_4vv_5$. Then $v_2$ is a bad or a semi-bad vertex, which implies that at least one of $v_1v_2$ and $v_2v_3$ is contained in two $3$-faces. Suppose that $v$ has two $4$-neighbours.  By contradiction, assume that $v$ is incident to at most one $5^+$-face. Thus, we have  $d_2(v)\leq 15$. If we set $G'=G-v+\{v_3v_4,v_1v_5\}$, then $G'$ would be proper with respect to $G$.
By minimality, the graph $G'$ has a 2-distance coloring with $16$ colors. Since $v$ has at most $15$ forbidden colors, we can color $v$ with an available color, a contradiction. Hence, $v$ is incident to two $5^+$-faces.

(b). 
Suppose that $v_2$ has two common neighbours with each of $v_1,v_3$.  By contradiction, assume that $v$ has a $4$-neighbour and $v$ is incident to a $4$-face. Clearly, we have  $d_2(v)\leq 15$. 
Similarly as above, we get a contradiction. So the claim holds.
\end{proof}

\begin{proposition}\label{prop:good-no-two-semi-bad}
Let $v$ be a good vertex with $3$-faces $f_1=v_1vv_2$, $f_2=v_2vv_3$, and $f_3=v_3vv_4$. If $n_4(v)=1$, and $v$ is incident to exactly two $5^+$-faces, then only one of $v_2,v_3$ can be semi-bad vertex. 
\end{proposition}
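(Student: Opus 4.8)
The plan is to argue by contradiction in the usual discharging-reducibility style of this paper: assume both $v_2$ and $v_3$ are semi-bad, derive enough structural information around $v$ to conclude that $d_2(v)\le 15$, and then delete $v$, add a suitable set of edges among $N(v)$ to obtain a proper graph $G'$, color $G'$ by minimality, and extend the coloring to $v$ — a contradiction. So the real content is a careful count of $|N_2(v)|$ and an identification of which new edges among $\{v_1,\dots,v_5\}$ are forced to be present (or can be safely added) so that $G'$ remains proper with respect to $G$.

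First I would record the local picture. Since $v$ is a good vertex with $3$-faces $f_1=v_1vv_2$, $f_2=v_2vv_3$, $f_3=v_3vv_4$, and (by definition of good vertex) each of $v_1v_2$ and $v_2v_3$ lies in two $3$-faces, while $v_2$ is semi-bad. Now assume for contradiction that $v_3$ is also semi-bad. A semi-bad vertex has $m_3=4$, so both $v_2v_3$ and $v_3v_4$ must lie in two $3$-faces — in particular $v_3v_4$ is in two $3$-faces. Combined with the hypothesis $n_4(v)=1$ and that $v$ is incident to exactly two $5^+$-faces (hence to exactly three $3$-faces among its five incident faces, the remaining two being the $5^+$-faces), this pins down the configuration tightly. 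By Lemma \ref{lem:faces-in-3-face}, $v$'s single $4$-neighbour cannot be $v_2,v_3$ (those are $5$-vertices), and by Lemma \ref{lem:deger}(a) applied to the semi-bad vertices $v_2$ and $v_3$, most of the second neighbourhood consists of $5$-vertices; I would use this to show $d_2(v)\le 15$ (five neighbours, plus at most ten further vertices, with the heavy sharing of common neighbours along the doubled $3$-face edges cutting the count down below the naive bound of $4\cdot 4+1=17$, actually well below it because of the $4$-faces forced by the two $5^+$-faces only if needed — the key is simply that four distinct edges of $v$ lie in $3$-faces forcing triangle-sharing).

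Then I would choose $G'=G-v+\{v_4v_5,\,v_5v_1\}$ (adding only those of these two edges not already present), mirroring the construction used for good-type vertices elsewhere (e.g. Lemma \ref{lem:5-vertex-three-4-neigh}(c,d) and Lemma \ref{lem:5-vertex-n3-0}(b)). The two added edges restore the required distance-$\le 2$ relations between $v_5$ and $v_1,v_4$; all other pairs at distance $\le 2$ through $v$ already have a common neighbour among the shared vertices of the $3$-faces around $v$, so $G'$ is proper with respect to $G$. By minimality $G'$ has a $2$-distance $16$-coloring, and since $d_2(v)\le 15$, $v$ has at most $15$ forbidden colors and can be colored, contradicting the choice of $G$. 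Hence at most one of $v_2,v_3$ is semi-bad.

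The main obstacle I expect is the bookkeeping for $d_2(v)\le 15$: one must verify that the two $5^+$-faces incident to $v$ do not inflate $|N_2(v)|$ past $15$, i.e. that the triangle-sharing forced by having four of $v$'s five incident edges inside $3$-faces is enough. A clean way to handle this is to note that each of the edges $v_1v_2$, $v_2v_3$, $v_3v_4$ lying in a $3$-face other than the one through $v$ contributes a vertex already counted, and that $v_2,v_3$ being semi-bad (hence $5$-vertices with all-$5$ or almost-all-$5$ neighbourhoods by Remark \ref{rem:specil vertices}(b) and Lemma \ref{lem:deger}(a)) forces the relevant common neighbours to exist; a short case check on whether the two $5^+$-faces are adjacent to each other or separated completes the count, exactly as in the proof of Lemma \ref{lem:5-vertex-n3-0-m3-4}(d).
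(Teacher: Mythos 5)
Your overall route is the paper's: assume both $v_2$ and $v_3$ are semi-bad, deduce that $v_3v_4$ lies in two $3$-faces, conclude $d_2(v)\le 15$, and recolor after passing to $G'=G-v+\{v_1v_5,v_4v_5\}$. Two steps, however, are not justified as written. First, the claim that a semi-bad vertex has $m_3=4$ and therefore $v_3v_4$ ``must'' lie in two $3$-faces does not follow from $m_3(v_3)=4$ alone: writing $z$ for the apex of the second $3$-face on $v_2v_3$, the triangles $v_2v_3z$, $f_2$, $f_3$ are consecutive around $v_3$, and the fourth triangle at $v_3$ could a priori be the wedge between its two remaining neighbours, in which case $v_3z$ is the doubled edge and $v_3v_4$ lies in only one $3$-face. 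The paper rules this out by applying Lemma~\ref{lem:5-vertex-n3-0-m3-4}(d) to the semi-bad vertex $v_2$: among the outer edges of the four triangles at $v_2$, the edge $vv_3$ is already contained in two $3$-faces ($f_2$ and $f_3$), so $v_3z$ cannot be, which forces the fourth triangle at $v_3$ to sit on $v_3v_4$. You invoke that lemma only for the $d_2$ bookkeeping, not for this step, so as written this is a genuine gap, although it is repaired by a tool you already have in hand.

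Second, your $G'$ adds two edges at $v_5$, so you must know $d(v_5)\le 4$; otherwise $G'$ has a vertex of degree $6$ and minimality (which applies only to planar graphs of maximum degree at most $5$) cannot be invoked. The paper gets this from Lemma~\ref{lem:deger}(a): applied to the semi-bad vertices $v_2$ and $v_3$ it shows that $v_1,v_2,v_3,v_4$ are $5$-vertices (note these are first neighbours of $v$, not the second neighbourhood as your sketch suggests), and then $n_4(v)=1$ forces the unique $4$-neighbour to be $v_5$. You use Lemma~\ref{lem:deger}(a) only toward $d_2(v)\le 15$ and never identify the $4$-neighbour, so the degree condition on $G'$ goes unchecked. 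Minor further points: with $m_3(v)=3$ the two $5^+$-faces are automatically $f_4$ and $f_5$, which share the edge $vv_5$, so your case split on whether they are adjacent is vacuous; and in the properness check the pair $v_1,v_4$ is handled by the new path $v_1v_5v_4$, not by a pre-existing common neighbour. With these repairs your argument coincides with the paper's proof.
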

\begin{proof}
Suppose that $n_4(v)=1$, and $v$ is incident to exactly two $5^+$-faces. It follows from the definition of good vertex, each of $v_1v_2$, $v_2v_3$ is contained in two $3$-faces. By contradiction, assume that both $v_2$ and $v_3$ are semi-bad vertices. It then follows from Lemma \ref{lem:5-vertex-n3-0-m3-4}(d) that $v_3v_4$ is contained in two $3$-faces. This implies that  $d_2(v)\leq 15$. By Lemma \ref{lem:deger}(a), the vertices $v_1,v_2,v_3,v_4$ are $5$-vertices. We then deduce that $v_5$ is a $4$-vertex since $n_4(v)=1$. Thus, if we set $G'=G-v+\{v_1v_5,v_4v_5\}$, then  $G'$ would be proper with respect to $G$.
By minimality, the graph $G'$ has a 2-distance coloring with $16$ colors.  Since $v$ has at most $15$ forbidden colors, we can color $v$ with an available color, a contradiction.
\end{proof}

\begin{lemma}\label{lem:good-properties}
Let $v$ be a good vertex with $3$-faces $f_1=v_1vv_2$, $f_2=v_2vv_3$ and $f_3=v_3vv_4$ such that $v_2$ is a semi-bad vertex. Then the followings hold.
\begin{itemize}
\item[$(a)$] $v$ has no $3$-neighbour. 
\item[$(b)$] $v$ has at most one $4$-neighbour.
\item[$(c)$] $v$ is incident to a $5^+$-face. 
\item[$(d)$] If $v$ has a $4$-neighbour, then $v$ is incident to two $5^+$-face. 
\item[$(e)$] If $v_3$ is also a semi-bad vertex, then $v$ is incident to two $5^+$-face.
\end{itemize}
\end{lemma}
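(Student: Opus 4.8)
The plan is to delete $v$ and extend a $2$-distance $16$-colouring of the resulting smaller graph, as in the earlier lemmas; the work lies in pinning down enough of the neighbourhood of a good vertex to force $d_2(v)\le 15$ in each part. First I would record the structure. Since $v$ is good, $m_3(v)=3$ with the three $3$-faces $f_1,f_2,f_3$ consecutive, so the two remaining faces at $v$, which I call $g_1$ (between $v_4$ and $v_5$) and $g_2$ (between $v_5$ and $v_1$), are $4^+$-faces. As $v_2$ is semi-bad and each of $v_1v_2,v_2v_3$ lies in two $3$-faces, the four $3$-faces at $v_2$ must be $v_1v_2a$, $vv_1v_2$, $vv_2v_3$, $v_2v_3b$ for some $a\in N(v_1)$ and $b\in N(v_3)$; hence $N(v_2)=\{v,v_1,v_3,a,b\}$ and the fifth face $av_2b$ at $v_2$ is a $5^+$-face. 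Applying Lemma~\ref{lem:deger}(a) to $v_2$ gives that $v_1,v,v_3$ are $5$-vertices, while $v_4$ is a $4^+$-vertex by Lemma~\ref{lem:faces-in-3-face}; thus the only neighbours of $v$ that can fail to be $5$-vertices are $v_4$ and $v_5$, which already reduces part (a) to excluding $d(v_5)=3$.

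The engine is a counting inequality. Using $N(v_2)\subseteq\{v\}\cup N(v)\cup\{a,b\}$ together with $a\in N(v_1)$, $b\in N(v_3)$, $v_3v_4\in E(G)$, and $d(v_1)=d(v_3)=5$,
\[
d_2(v)=|N(v)|+\Big|\bigcup_{i=1}^{5}N(v_i)\setminus(\{v\}\cup N(v))\Big|\le 5+|\{a,b\}|+(d(v_1)-3)+(d(v_3)-4)+(d(v_4)-2)+(d(v_5)-1)=d(v_4)+d(v_5)+7 .
\]
Each $4$-face among $g_1,g_2$ supplies a common neighbour of two of $v_1,v_4,v_5$ outside $\{v\}\cup N(v)$, lowering this bound by $1$; and if $v_3$ is semi-bad then (justified below) $v_3$ has a $3$-face $v_3v_4e$, so $e\in N(v_3)\cap N(v_4)\setminus(\{v\}\cup N(v))$, lowering it by a further $1$. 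For each part I would then assume the conclusion fails, deduce $d_2(v)\le 15$, and invoke minimality for $G'=G-v+\{v_1v_5,v_4v_5\}$ (adding only those of these two edges not already present): deleting $v$ merges $f_1,f_2,f_3,g_1,g_2$ into a single face along which $v_1,\ldots,v_5$ occur cyclically, so $G'$ is planar, and $G'$ is proper with respect to $G$ because in $G'$ the vertices $v_1,\ldots,v_5$ lie on the $5$-cycle $v_1v_2v_3v_4v_5v_1$ (the path $v_1v_2v_3v_4$ coming from $f_1,f_2,f_3$), so any two of them are at distance at most $2$. As $|V(G')|+|E(G')|<|V(G)|+|E(G)|$, $G'$ has a $2$-distance $16$-colouring; it restricts to one of $G-v$, and since $d_2(v)\le 15$ we colour $v$, a contradiction.

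Running the cases: for (a), $d(v_5)=3$ gives $d_2(v)\le d(v_4)+3+7\le 15$; for (b), if $v_4$ and $v_5$ are both $4$-vertices then $d_2(v)\le 8+7=15$; for (c), if $v$ meets no $5^+$-face then $g_1$ and $g_2$ are both $4$-faces, so $d_2(v)\le d(v_4)+d(v_5)+7-2\le 15$; for (d), a $4$-neighbour together with at most one $5^+$-face gives $d(v_4)+d(v_5)\le 9$ and a $4$-face among $g_1,g_2$, so $d_2(v)\le 9+7-1=15$; for (e), if $v_3$ is semi-bad and $v$ meets at most one $5^+$-face then $v_4$ is a $5$-vertex (below) and a $4$-face occurs among $g_1,g_2$, so $d_2(v)\le 5+5+7-1-1=15$. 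To justify the semi-bad $v_3$ claim: the triangles $vv_2v_3$, $vv_3v_4$, $v_2v_3b$ force $b,v_2,v,v_4$ to be consecutive neighbours of $v_3$ carrying the $3$-faces $v_2v_3b$, $f_2$, $f_3$; writing $e$ for the fifth neighbour of $v_3$, the fourth $3$-face at the semi-bad vertex $v_3$ is either $v_3v_4e$ or $v_3eb$. In the second case the two central $3$-faces at $v_3$ are $v_2v_3b$ and $f_2$, whose common vertex besides $v_3$ is $v_2$, so Lemma~\ref{lem:deger}(c) applied to $v_3$ would force $v_2$ to be a support, strong, or good vertex, which is impossible since those have $m_3\le3$ whereas $v_2$ is semi-bad. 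Hence the fourth $3$-face is $v_3v_4e$, giving $e\in N(v_3)\cap N(v_4)$, and Lemma~\ref{lem:deger}(a) applied to $v_3$ shows that $v_4$ is a $5$-vertex.

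The step I expect to be the main obstacle is exactly part (e): it closes only because $v_3$ being semi-bad yields both a common external neighbour of $v_3$ and $v_4$ and the equality $d(v_4)=5$, and extracting these requires eliminating the alternative $3$-face pattern at $v_3$ via Lemma~\ref{lem:deger}(c). Parts (a)--(d) should be comparatively routine once the structure of $v_2$ and the counting inequality are in place; there the care needed is to check that a $4$-face at $v$ is forced whenever $v$ has at most one $5^+$-face, and that the added edges $v_1v_5,v_4v_5$ keep $G'$ planar and proper.
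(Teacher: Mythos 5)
Your proof is correct and takes essentially the same approach as the paper: delete $v$, add edges among $N(v)$ to keep the graph proper (the paper tailors the added edges to each case, while you uniformly use $\{v_1v_5,v_4v_5\}$, which also gives a planar proper $G'$ via the $5$-cycle on $N(v)$), and verify $d_2(v)\le 15$ in each part, where your explicit counting inequality replaces the paper's ``clearly''. The only substantive deviation is in part (e), where you rule out the alternative fourth triangle at $v_3$ via Lemma~\ref{lem:deger}(c) (forcing the impossible conclusion that the semi-bad vertex $v_2$ is support/strong/good), whereas the paper gets $v_3v_4$ in two $3$-faces from Lemma~\ref{lem:5-vertex-n3-0-m3-4}(d) applied to $v_2$; both routes are valid.
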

\begin{proof} Suppose that $v$ is a good vertex with $3$-faces $f_1=v_1vv_2$, $f_2=v_2vv_3$ and $f_3=v_3vv_4$ such that $v_2$ is a semi-bad vertex. By the definition of good vertex, each of  $v_1v_2$, $v_2v_3$ is contained in two $3$-faces. Denote by $f_1,f_2,\ldots,f_5$ the faces incident to $v$ in a clockwise order. 

(a). By contradiction, assume that $v$ has a $3$-neighbour, which should be $v_5$ by Lemma \ref{lem:faces-in-3-face}. Clearly, we have $d_2(v)\leq 15$. If we set $G'=G-v+\{v_4v_5,v_5v_1\}$, 
then $G'$ is proper with respect to $G$. By minimality, the graph $G'$ has a 2-distance coloring with $16$ colors. Since  $d_2(v)\leq 15$, we can color $v$ with an available color, a contradiction.  Thus, $v$ has no $3$-neighbour.

(b). Assume to the contrary that $v$ has two $4$-neighbours. Thus we have  $d_2(v)\leq 15$.  Notice that $v_1,v_2,v_3$ are $5$-vertices by Lemma \ref{lem:deger}(a). 
It then follows that $v_4$ and $v_5$ are $4$-vertices by the assumption.
If we set $G'=G-v+\{v_4v_5,v_5v_1\}$, then $G'$ is proper with respect to $G$.  Similarly as above, we get a contradiction. So, $v$ has at most one $4$-neighbour.

(c). Suppose that $v$ is incident to no $5^+$-face, i.e., $f_4,f_5$ are $4$-faces. This particularly implies that $v_5$ has a common neighbour, except $v$, with each of $v_1,v_4$.  Clearly, we have $d_2(v)\leq 15$.  
Denote by $v_6$ (resp., $v_7$) the common neighbour of $v_1$ and $v_2$ (resp., $v_2$ and $v_3$) other than $v$.
Let $G'$ be the graph obtained from $G$ by deleting vertices $v,v_2$ and adding edges $v_1v_3$, $v_4v_5$, $v_6v_7$. Observe that every vertex in $V(G)$, except $v$ and $v_2$, is colored with the same color that appears on the same vertex in $G'$. Namely, $G'$ is proper with respect to $G$. Thus, it suffices to color only $v$ and $v_2$. Since $d_2(v_2)\leq 16$ and $v$ is uncolored vertex, we say that the vertex $v_2$ has at most $15$ forbidden colors. Therefore we can color $v_2$ with an available color. In addition, we can color $v$ with an available color since  $d_2(v)\leq 15$, a contradiction.  Thus, $v$ is incident to a $5^+$-face.

(d).  Suppose that $v$ has a $4$-neighbour, and assume for a contradiction that $v$ is incident to at most one $5^+$-face. This implies that at least one of $f_4,f_5$ is a $4$-face. Without loss of generality, assume that $f_5$ is a $4$-face forming $f_5=vv_1xv_5$ for $x\in N(v_1)\cap N(v_5)$.  Clearly, we have $d_2(v)\leq 15$. 
Let $G'$ be the graph obtained from $G$ by deleting vertices $v,v_2$ and adding edges $v_1v_3$, $v_4v_5$, $v_6v_7$. Similarly as above, $G'$ is proper with respect to $G$. Also, we can color $v_2$ and $v$ in order, a contradiction. Hence, $v$ is incident to two $5^+$-faces.

(e) Suppose that $v_3$ is also a semi-bad vertex. Since $v_2$ is a semi-bad vertex,  $v_3v_4$ is contained in two $3$-faces by Lemma \ref{lem:5-vertex-n3-0-m3-4}(d). Consequently, each of $v_1v_2$,  $v_2v_3$, $v_3v_4$ is contained in two $3$-faces. If $v$ is incident to at most one $5^+$-face, then at least one of $f_4,f_5$ is a $4$-face. Without loss of generality, assume that $f_5$ is a $4$-face forming $f_4=vv_1xv_5$ for $x\in N(v_1)\cap N(v_5)$. Clearly, we have $d_2(v)\leq 15$ since each of  $v_1v_2$, $v_2v_3$, $v_3v_4$ is contained in two $3$-faces. 
Let $G'$ be the graph obtained from $G$ by deleting vertices $v,v_2$ and adding edges $v_1v_3$, $v_4v_5$, $v_6v_7$. Similarly as above, we get a contradiction. 
\end{proof}

\begin{lemma}\label{lem:support-properties}
Let $v$ be a support vertex. 
\begin{itemize}
\item[$(a)$] If $n_3(v)=1$, then  $v$ is incident to a $5^+$-faces. 
\item[$(b)$] If $n_3(v)=1$ and $n_4(v)=1$ such that there exists a $5^+$-face incident to  both $v$ and its $3$-neighbour, then  $v$ is incident to two $5^+$-faces. 
\item[$(c)$] If $n_3(v)=1$ and $n_4(v)=2$, then  $v$ is incident to three $5^+$-faces. 
\item[$(d)$] If $n_3(v)=2$, then $v$ is incident to three $5^+$-faces. 
\end{itemize}
\end{lemma}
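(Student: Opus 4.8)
The plan is to prove all four parts by the deletion--extension scheme set up before Lemma~\ref{lem:deg-geq-3}: assuming $v$ is a support vertex for which the stated conclusion fails, I show that this forces $d_2(v)\le 15$, build a planar graph $H$ on $V(G-v)$ with $\Delta(H)\le 5$ that is proper with respect to $G$, color $H$ by minimality, and extend the color to $v$. Fix notation first. A support vertex is a $5$-vertex; write $v_1,\dots,v_5$ for its neighbors clockwise, with $f_1=v_1vv_2$ and $f_2=v_2vv_3$ the two incident $3$-faces, so the remaining three incident faces $f_3,f_4,f_5$ are $4^+$-faces. By definition $v_2$ is bad or semi-bad, so $d(v_2)=5$; and since $v_1,v_3$ lie on $3$-faces, $d(v_1),d(v_3)\ge 4$ by Lemma~\ref{lem:faces-in-3-face}, so every $3$-neighbor of $v$ is one of $v_4,v_5$. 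The reflection exchanging $v_1\leftrightarrow v_3$ and $v_4\leftrightarrow v_5$ (fixing $v_2$) preserves the configuration, so when $n_3(v)=1$ I may assume the $3$-neighbor is $v_5$.

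The next step is to estimate $d_2(v)$. Let $k$ be the number of $4$-faces among $f_3,f_4,f_5$. Counting second neighbors of $v$: $v_1$ and $v_3$ each have $v_2$ as a neighbor lying in $N(v)$, $v_2$ has $v_1$ and $v_3$ as neighbors in $N(v)$, each $4$-face among $f_3,f_4,f_5$ yields a vertex shared by two consecutive neighbors of $v$, and since $v_2$ is bad or semi-bad one of $v_1v_2,v_2v_3$ lies in two $3$-faces, giving one more shared second neighbor. Adding these contributions and using $d(v_2)=5$ gives $d_2(v)\le d(v_1)+d(v_3)+d(v_4)+d(v_5)-k$. Now negate the conclusion in each part. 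In (a) this makes $f_3,f_4,f_5$ all $4$-faces, so $k=3$, while $d(v_4)+d(v_5)\le 8$ by $n_3(v)=1$; hence $d_2(v)\le 5+5+8-3=15$. In (b) it forces $k\ge 2$, and $n_3(v)=n_4(v)=1$ gives $d(v_1)+d(v_3)+d(v_4)+d(v_5)\le 3+4+5+5=17$; hence $d_2(v)\le 15$. In (c) it forces $k\ge 1$, and $n_3(v)=1,\,n_4(v)=2$ gives the sum at most $3+4+4+5=16$. In (d) it forces $k\ge 1$, and $n_3(v)=2$ gives the sum at most $3+3+5+5=16$. So $d_2(v)\le 15$ in every case.

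It remains to construct $H$. Delete $v$ and add at most three edges inside $N(v)$ (only those not already present) so that every pair of neighbors of $v$ stays at distance $\le 2$; this makes $H$ proper. The point is to keep $\Delta(H)\le 5$: after deleting $v$ a neighbor of degree $d$ can take $6-d$ new edges, so the $3$-neighbors of $v$ (budget $3$) and any $4$-neighbor (budget $2$) absorb the bulk of the new edges, while $v_2$ and the $5$-neighbors take at most one more each. When all of $f_3,f_4,f_5$ are $4$-faces---the whole of part (a), and a subcase of (b)--(d)---the three $4$-faces already put $v_3v_4$, $v_4v_5$, $v_5v_1$ at distance $2$ in $G-v$, so it suffices to join the $3$-neighbor $v_5$ to $v_1,v_2,v_4$; this is a fan at $v_5$, hence planar, and it charges only one edge to each of $v_1,v_2,v_4$. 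When some $f_i$ is a $5^+$-face (so $v$ is already incident to a $5^+$-face), the outstanding pairs are routed through $v_2$ and through the low-degree neighbors, using Remark~\ref{rem:specil vertices} and Lemma~\ref{lem:deger} to pin down which of $v_1,v_3,v_4$ are $5$-vertices. In every case the added edges lie inside the face vacated by $v$, so $H$ is planar with $|V(H)|+|E(H)|<|V(G)|+|E(G)|$; by minimality $H$ has a $2$-distance $16$-coloring, $v$ has a free color since $d_2(v)\le 15$, and the extension contradicts the choice of $G$.

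The main obstacle is this last step: choosing the added edges so that $H$ is simultaneously proper and of maximum degree at most $5$. One cannot simply complete $N(v)$ into a $5$-cycle, since the two chords meeting $v_4$ would raise a $5$-neighbor to degree $6$; the $3$- and $4$-neighbors of $v$ must carry most of the new edges, and one relies on the $4$-faces to supply additional distance-$2$ pairs at no cost and on the structural lemmas to bound $d(v_1),d(v_3),d(v_4)$. The tightest configuration---a $5^+$-face among $f_3,f_4,f_5$ together with $v_4$ a $5$-vertex---is the one that demands the most delicate choice of edges; the rest is the routine bookkeeping already seen in the preceding lemmas.
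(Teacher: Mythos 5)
Your overall scheme is the same as the paper's: negate the conclusion, show $d_2(v)\le 15$, delete $v$, add edges inside $N(v)$ to obtain a proper (planar, $\Delta\le 5$) graph, color it by minimality, and extend to $v$. The counting that gives $d_2(v)\le d(v_1)+d(v_3)+d(v_4)+d(v_5)-k$ is correct (and more systematic than the paper, which just asserts $d_2(v)\le 15$ case by case), and it does yield $d_2(v)\le 15$ under the negation in all four parts. Your explicit construction for the situation where $f_3,f_4,f_5$ are all $4$-faces --- the fan $v_5v_1,v_5v_2,v_5v_4$ at the $3$-neighbour --- covers every remaining pair at distance two, keeps all degrees at most $5$, and is a non-crossing set of chords in the face vacated by $v$; this genuinely settles part (a).

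The gap is in parts (b)--(d). When one of $f_3,f_4,f_5$ is a $5^+$-face you never exhibit the added edges, saying only that the outstanding pairs are ``routed through $v_2$ and through the low-degree neighbours,'' and your closing paragraph concedes that the tightest configuration (a $5^+$-face present with $d(v_4)=5$) is left open. But this mixed case is not a residual subcase: it is the entire content of those parts. In (b), part (a) forces exactly one $5^+$-face among $f_3,f_4,f_5$ under the negation, so the configuration you actually worked out never occurs there; in (c) and (d) the negation only guarantees one $4$-face. And the case you flag really does resist the obvious choices: the boundary path $\{v_3v_4,v_4v_5,v_5v_1\}$, which works for (c) and (d) because there $v_4$ is a $3$- or $4$-vertex, would push a $5$-vertex $v_4$ to degree $6$ in (b). The paper handles exactly this by splitting (b) according to the position of the unique $5^+$-face, taking $G'=G-v+\{v_2v_4,v_3v_5,v_1v_5\}$ when $f_3$ is the $5^+$-face and the fan $G'=G-v+\{v_1v_5,v_2v_5,v_4v_5\}$ when $f_4$ or $f_5$ is, and it handles (c), (d) with $G'=G-v+\{v_3v_4,v_4v_5,v_5v_1\}$, using Remark~\ref{rem:specil vertices}(b) to control which of $v_1,v_3$ can be a $4$-vertex so that the degree cap survives. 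Until you write down such edge sets for each subcase and verify properness, planarity of the added chords, and $\Delta\le 5$, parts (b)--(d) are not proved; identifying the delicate configuration names the problem rather than solving it.
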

\begin{proof} 
Let $v$ be a support vertex with $3$-faces $f_1=v_1vv_2$ and $f_2=v_2vv_3$. Then $v_2$ is a bad or a semi-bad vertex, which implies that at least one of $v_1v_2$ and $v_2v_3$ is contained in two $3$-faces.

(a).  Let $n_3(v)=1$.  By Lemma \ref{lem:faces-in-3-face}, the $3$-neighbour of $v$ is either $v_4$ or $v_5$. Assume, without loss of generality,  that $v_5$ is a $3$-vertex. By contradiction, assume that all faces incident to $v$ other than $f_1,f_2$ are $4$-faces. We then say that each pair of $\{v_3,v_4\}, \{v_4,v_5\}, \{v_1,v_5\}$ has a common neighbour in $G-v$. Clearly, we have $d_2(v)\leq 15$. Thus, if we set $G'=G-v+\{v_1v_5,v_2v_5,v_4v_5\}$, then $G'$ would be proper with respect to $G$. By minimality, the graph $G'$ has a 2-distance coloring $f$ with $16$ colors. Since $v$ has at most $15$ forbidden colors, we can color $v$ with an available color, a contradiction. Hence, $v$ is incident to a $5^+$-face.

(b). Let $n_3(v)=1$ and $n_4(v)=1$. Suppose that there exists a $5^+$-face incident to  both $v$ and its $3$-neighbour.  By Lemma \ref{lem:faces-in-3-face}, the $3$-neighbour of $v$ is either $v_4$ or $v_5$. Assume, without loss of generality,  that $v_5$ is a $3$-vertex.   Denote by $f_1,f_2,\ldots,f_5$ the faces incident to $v$ in clockwise order on the plane. By our assumption, $f_4$ or $f_5$ is a $5^+$-face.
Assume to the contrary that $v$ is incident to two $4$-faces, i.e., two of $f_3,f_4,f_5$ are $4$-face. We then have $d_2(v)\leq 15$, and deduce that $f_3$ is a $4$-face.
When we set $G'=G-v+\{v_1v_5,v_2v_5,v_4v_5\}$, $G'$ is proper with respect to $G$. Similarly as above, we get a contradiction. Hence, $v$ is incident to two $5^+$-faces.

(c).  Let $n_3(v)=1$ and $n_4(v)=2$. By Lemma \ref{lem:faces-in-3-face}, the $3$-neighbour of $v$ is either $v_4$ or $v_5$. We may suppose that $v_5$ is a $3$-vertex. Assume to the contrary that $v$ is incident to at most two $5^+$-faces. This implies that $v$ is incident to a $4$-face. Since at most one of $v_1,v_3$ is a $4$-vertex by Remark \ref{rem:specil vertices}(b), we may assume that $v_3,v_4$ are $4$-vertices due to $n_4(v)= 2$ and $n_3(v)=1$.  In such a case, we have $d_2(v)\leq 15$ and so we set $G'=G-v+\{v_3v_4,v_4v_5,v_5v_1\}$.  Clearly,  $G'$ is proper with respect to $G$.
Similarly as above, we get a contradiction.

(d). Let $n_3(v)=2$. So, $v_4$ and $v_5$ are $3$-vertices by Lemma \ref{lem:faces-in-3-face}.  Suppose that $v$ is incident to a $4$-face.  In such a case, we have $d_2(v)\leq 15$ and so we set $G'=G-v+\{v_3v_4,v_4v_5,v_5v_1\}$.  Clearly,  $G'$ is proper with respect to $G$.
Similarly as above, we get a contradiction. 
\end{proof}

In the rest of the paper, we will apply discharging to show that $G$ does not exist. We assign to each vertex $v$ a charge $\mu(v)=d(v)-4$ and to each face $f$ a charge $\mu(f)=\ell(f)-4$. By Euler's formula, we have
$$\sum_{v\in V}\left(d(v)-4\right)+\sum_{f\in F}(\ell(f)-4)=-8$$

We next present some rules and redistribute accordingly. Once the discharging finishes, we check the final charge $\mu^*(v)$ and $\mu^*(f)$. If $\mu^*(v)\geq 0$ and $\mu^*(f)\geq 0$, we get a contradiction that no such a counterexample can exist.


\subsection{Discharging Rules} \label{sub:} ~\medskip

We apply the following discharging rules.

\begin{itemize}
\item[\textbf{R1:}] Every $3$-face  receives $\frac{1}{3}$  from each of its incident vertices.
\item[\textbf{R2:}] Every $3$-vertex receives $\frac{1}{9}$  from each of its  $5$-neighbours.
\item[\textbf{R3:}] Every $3$-vertex receives $\frac{1}{3}$  from each of its  incident $5^+$-faces.
\item[\textbf{R4:}] Every $4$-vertex receives $\frac{1}{5}$  from each of its  incident $5^+$-faces.
\item[\textbf{R5:}] Every $5$-vertex receives $\frac{1}{5}$  from each of its  incident $5^+$-face $f$ if $f$ does not contain any $3$-neighbour of $v$.
\item[\textbf{R6:}] Every $5$-vertex receives $\frac{1}{9}$  from each of its  incident $5^+$-face $f$ if $f$ contains a $3$-neighbour of $v$.

\item[\textbf{R7:}] Let $v$ be a $5$-vertex with $m_3(v)\leq 3$, and let $u$ be a $4$-neighbour of $v$.
\begin{itemize}
\item[(a)] If $uv$ belongs to only one $5^+$-face, then $v$ gives $\frac{1}{15}$ to $u$. 
\item[(b)] If $uv$ belongs to two $5^+$-faces, then $v$ gives $\frac{2}{15}$ to $u$. 
\end{itemize}

\item[\textbf{R8:}] Let $v$ be a strong vertex with $3$-faces $f_1=v_1vv_2$, $f_2=v_2vv_3$, and $f_3=v_4vv_5$. Suppose that  $v_2$ is a bad or a semi-bad vertex. 
\begin{itemize}

\item[(a)] If exactly one of $v_1v_2,v_2v_3$ is contained in two $3$-faces, then $v$ gives $\frac{2}{15}$ to $v_2$. 
\item[(b)] If both $v_1v_2$ and $v_2v_3$ are contained in two $3$-faces, then $v$ gives $\frac{1}{5}$ to $v_2$. 

\end{itemize}




\item[\textbf{R9:}] Let $v$ be a good vertex with $3$-faces $f_1=v_1vv_2$, $f_2=v_2vv_3$, and $f_3=v_3vv_4$.
 If  $v_2$ (or $v_3$) is a  semi-bad vertex, then it receives $\frac{1}{5}$ from $v$.

\item[\textbf{R10:}] Every support vertex $v$ gives $\frac{1}{3}$ to each of its bad and semi-bad neighbour $u$ if $vu$ is contained in two $3$-faces.



\end{itemize}

\vspace*{1em}

\noindent
\textbf{Checking} $\mu^*(v), \mu^*(f)\geq 0$, for $v\in V(G), f\in F(G)$\medskip

We initially show that $\mu^*(f)\geq 0$ for each $f\in F(G)$. Given a face $f\in F(G)$, 
if $f$ is a $3$-face, then it receives $\frac{1}{3}$ from each of incident vertex by R1. 
If $f$ is a $4$-face, then $\mu(f)=\mu^*(f)= 0$.
Let $f$ be a $5$-face. Notice that $f$ is not incident to more than  two   $3$-vertices by Corollary \ref{cor:vertices-on-5-face}(a). If $f$ is incident to two $3$-vertices, which  are clearly non-adjacent by Lemma \ref{lem:3-vertex-no-3-4-neighbour}, then all the other vertices incident to $f$ are $5$-vertices by Corollary \ref{cor:vertices-on-5-face}(b), and so each of them receives  $\frac{1}{9}$ from $f$ by R6. Also, $f$ sends $\frac{1}{3}$ to both $3$-vertices incident to $f$ by R3. Thus, $\mu^*(f)= 0$. If $f$ is incident to exactly one $3$-vertex, then
$f$ is incident to at most two $4$-vertices by Corollary \ref{cor:vertices-on-5-face}(c). Thus two $5$-vertices incident to $f$ receive  $\frac{1}{9}$ from $f$ by R6, and the other $4^+$-vertices receive at most $\frac{1}{5}$ from $f$ by R4-R5. Thus $\mu^*(f)= 0$ after $f$ sends $\frac{1}{3}$ to its incident $3$-vertex by R3. Finally, if $f$ is not incident to any $3$-vertex, then  $\mu^*(f)\geq  0$ after $f$ sends at most $\frac{1}{5}$ to its  each incident vertex by R4-R5.

If $f$ is a $6^+$-face, then $\mu(f)=\ell(f)-4 \geq 2$, similarly as above, we have  $\mu^*(f)\geq 0$ by Corollary \ref{cor:number-of-3-vert}.
Consequently, we have $\mu^*(f)\geq 0$ for each $f\in F(G)$.  \medskip

We now pick a vertex $v\in V(G)$ with $d(v)=k$. By Lemma \ref{lem:deg-geq-3}, we have $k\geq 3$. \medskip

\textbf{(1).} Let $k=3$. By Lemma \ref{lem:faces-in-3-face}, $v$ is not incident to any $3$-face. Also,  all neighbours of $v$ are $5$-vertices  by Lemma \ref{lem:3-vertex-no-3-4-neighbour}. So, $v$ receives $\frac{1}{9}$ from each neighbour by R2. On the other hand, $v$ is incident to at least two $5^+$-faces by Lemma \ref{lem:faces-in-3-face}.  It follows from applying R3 that $v$ receives $\frac{1}{3}$  from  each of its incident $5^+$-faces, and so  $\mu^*(v)\geq  d(v)-4+ 3\times \frac{1}{9}+2\times \frac{1}{3}=0$. \medskip

\textbf{(2).} Let $k=4$. Denote by $v_1,v_2,v_3, v_4$ the neighbours of $v$. Recall that all $v_i$'s are $4^+$-vertices by Lemma \ref{lem:3-vertex-no-3-4-neighbour}. Obviously $\mu^*(v) \geq 0$ when $m_3(v)=0$ since $v$ does not have to give a charge to any vertex. We may therefore assume that  $m_3(v)\geq 1$. In addition, we have $m_3(v)\leq 2$ by Lemma \ref{lem:4-vertex-has-no-two-3-face}.

First suppose that $m_3(v)=1$. Let $f=v_1vv_2$ be the $3$-face incident to $v$. Observe that if $v$ is incident to at least two $5^+$-faces, then $v$ receives $\frac{1}{5}$ from each of its incident $5^+$-face by R4, and so $\mu^*(v)\geq 0$ after $v$ sends $\frac{1}{3}$ to $f$ by R1. Thus we may further assume that $v$ is incident to at most one $5^+$-face.
If $f$ is a $(4,4,4^+)$-face, then, by Lemma \ref{lem:4-vertex-bad-3}, $v$ is incident to two $5^+$-faces, it is a contradiction with our assumption. 
Suppose now that $f$ is a $(4,5,5)$-face.  Notice that both $v_3$ and $v_4$ are $5$-vertices by Proposition \ref{prop:4-vertex-4-5-5-no-4}.  Also, by Lemma \ref{lem:4-vertex-semi-bad-3}, $v$ is incident to exactly one $5^+$-face, say $f'$. 
This implies that $m_4(v)=2$. 
Suppose first that $f$ and $f'$ are adjacent, then $v_1v_2$ is not contained in two $3$-faces by Proposition \ref{prop:4-vertex-no-common-m1}. Thus, for every $i\in [4]$, we conclude that $m_3(v_i)\leq 3$. 
Since $f$ and $f'$ are adjacent, we may assume, without loss of generality, that $f'$ contains the path $v_2vv_3$.  So, $v$ receives at least $\frac{1}{15}$ from each $v_2$ and $v_3$ by R7(a), and $\frac{1}{5}$ from $f'$ by R4.  Thus, $v$ receives totally at least $\frac{2}{15}+\frac{1}{5}$ from its $5$-neighbours, and its incident $5^+$-face. Consequently  $\mu^*(v)\geq 0$ after $v$ sends $\frac{1}{3}$ to $f$ by R1. 
Suppose next that $f$ and $f'$ are non-adjacent. It turns out that $f'$ is a $5^+$-face containing the path $v_3vv_4$. Observe that both $v_3$ and $v_4$ are incident to at least two $4^+$-faces, so  we have $m_3(v_3)\leq 3$ and $m_3(v_4)\leq 3$. It then follows that $v$ receives at least $\frac{1}{15}$ from each $v_3$ and $v_4$ by R7(a), and $\frac{1}{5}$ from $f'$ by R4.  Thus, $v$ receives totally at least $\frac{1}{3}$ from its $5$-neighbours, and its incident $5^+$-face. Consequently  $\mu^*(v)\geq 0$ after $v$ sends $\frac{1}{3}$ to $f$ by R1. 

We now suppose that  $m_3(v)=2$. Let $f_1,f_2$ be $3$-faces incident to $v$. By Lemma \ref{lem:4-vertex-has-no-two-3-face}, all neighbours of $v$ are $5$-vertices, and $v$ is incident to two $5^+$-faces, say $f_3,f_4$. Moreover, no edge in  $G[N(v)]$ is contained in two $3$-faces by Proposition \ref{prop:4-vertex-no-common}. Thus, for every $i\in [4]$, we have $m_3(v_i)\leq 3$. By applying R4, $v$ receives $\frac{1}{5}$ from each of its incident $5^+$-face. If $f_1$ and $f_2$ are non-adjacent, then $v$ receives $\frac{1}{15}$ from its each neighbour by R7(a).  If $f_1$ and $f_2$ are adjacent, then there exists $u\in N(v)$ such that $uv$ belongs to two $5^+$-faces. So, $v$   receives $\frac{2}{15}$ from $u$ by R7(b), and $\frac{1}{15}$ from its every other neighbours belonging to $f_3$ or $f_4$ by R7(a). Thus, $v$ receives totally at least $2\times\frac{1}{5}+\frac{2}{15}+2\times\frac{1}{15}$ from its $5$-neighbours, and its incident $5$-faces. Consequently  $\mu^*(v)\geq 0$ after $v$ sends $\frac{1}{3}$ to each $f_1$ and $f_2$ by R1.  \medskip

\textbf{(3).} Let $k=5$. Denote by $v_1,\ldots, v_5$ the neighbours of $v$ in clockwise order, and let $f_1,f_2,\ldots, f_5$ be faces incident to $v$. By Lemma \ref{lem:faces-on-5-vertex}, $n_3(v)\leq 4$ and $m_3(v)\leq 4$. 
We first note that if $v$ is incident to at most one $3$-face, then  $\mu^*(v)\geq 0$ after $v$  sends $\frac{1}{9}$ to each of its $3$-neighbours by R2; $\frac{1}{3}$ to its incident $3$-face by R1 (if exists); at most $\frac{2}{15}$ to each of its $4$-neighbours by R7. We may therefore assume that $m_3(v)\geq 2$. 
Observe that if $v$ is adjacent to at least three $3$-vertices, then at most one of $f_i$'s can be $3$-face, a contradiction with the assumption. Thus we also assume that $n_3(v)\leq 2$. \medskip


\textbf{(3.1).} Let $n_3(v)=2$. Then at most two of $f_i$'s can be $3$-face by Lemma \ref{lem:faces-in-3-face}.  Indeed, by the assumption of $m_3(v)\geq 2$, we say that exactly two of $f_i$'s are $3$-faces. Let $f_1,f_2$ be those $3$-faces. Clearly, $f_1$ and $f_2$ are adjacent, say $f_1=v_1vv_2$, $f_2=v_2vv_3$. Also, $v_4$ and $v_5$ are $3$-vertices.
If $v$ is a support vertex, then $v_2$ is a bad or a semi-bad vertex. In such a case,  
 $v$ is incident to three $5^+$-faces by Lemma \ref{lem:support-properties}(d), and at  most one of $v_1, v_3$ is a $4$-vertex by Proposition \ref{prop:  v has two 4 neighbour}.  By R6, $v$ receives $\frac{1}{9}$ from each of its incident $5^+$-face. Thus $\mu^*(v)\geq 0$ after $v$ sends $\frac{1}{3}$ to each of its incident $3$-faces by R1, at most $\frac{1}{3}$ to $v_2$ by R10, $\frac{1}{9}$ to each of its $3$-neighbours by R2 and $\frac{1}{15}$ to its $4$-neighbour by R7(a) (if exists).
  
Let us now assume that $v$ is not a support vertex.  Since at most one of $v_i$'s is a $4$-vertex by Proposition \ref{prop:  v has two 4 neighbour}, we have  $\mu^*(v)\geq 0$ after $v$  sends  $\frac{1}{3}$ to each of its incident $3$-faces by R1, $\frac{1}{9}$ to each of its $3$-neighbours by R2, and $\frac{1}{15}$ to its $4$-neighbour by R7(a) (if exists). \medskip




\textbf{(3.2).} Let $n_3(v)=1$. Clearly, we have $2\leq m_3(v)\leq 3$ by Lemma \ref{lem:faces-in-3-face}. Denote by $v_5$ the $3$-neighbour of $v$.  \medskip

\textbf{(3.2.1).} Let $ m_3(v)=2$. By Lemma \ref{lem:5-vertex-three-4-neigh}(a), $v$ has at most three $4$-neighbours, i.e., $n_4(v)\leq 3$. 
Notice that $v$ is incident to  a $5^+$-face when $v$ has a $4$-neighbour by Lemma \ref{lem:5-vertex-three-4-neigh}(b). In such a case, $v$ receives at least $\frac{1}{9}$ from its incident $5^+$-face by R5-R6.

Let us first assume that $f_1$ and $f_2$ are non-adjacent, say $f_1=v_1vv_2$ and $f_2=v_3vv_4$. Then, we note that no $vv_i$ is contained in two $5^+$-face for $i \in [5]$, since $d(v_5)=3$. This means that the rule R7(b) cannot be applied for $v$. Thus,  $\mu^*(v) \geq 0$ after $v$ sends $\frac{1}{3}$ to each of its incident $3$-faces by R1, $\frac{1}{9}$ to its $3$-neighbour by R2, and  $\frac{1}{15}$ to each of its $4$-neighbours by R7(a).

Assume further that $f_1$ and $f_2$ are adjacent, say $f_1=v_1vv_2,$ and $f_2=v_2vv_3$.  Denote by $f_1,f_2,\ldots,f_5$ the faces incident to $v$ in clockwise order on the plane.  Notice that if there exists a $4$-vertex $v_i\in N(v)$ such that $vv_i$ is contained in two $5^+$-faces, then $v$ receives totally at least $\frac{1}{9}+\frac{1}{5}$ from its incident $5^+$-faces by R5-R6, since $v_5$ is a $3$-vertex. In fact, only $vv_4$ among all $vv_i$'s can belong to two $5^+$-faces. So, $v$ sends totally at most  $2\times \frac{1}{15}+\frac{2}{15}$ to its all $4$-neighbours by R7 because of $d(v_5)=3$ and $n_4(v)\leq 3$. Notice that $v$ may be a support vertex or not, so we will consider these two cases separately as follows.

Suppose that $v$ is not a support vertex. So $v_2$ does not receive any charge from $v$, even if it is $4$-vertex, according to R7. If $v$ has no $4$-neighbour, then  $\mu^*(v) \geq 0$ after $v$ sends $\frac{1}{3}$ to each of its incident $3$-faces by R1, $\frac{1}{9}$ to its $3$-neighbour by R2. If $v$ has a $4$-neighbour, then  $v$ receives at least $\frac{1}{9}$ from its incident $5^+$-face by R5-R6, which exists by Lemma \ref{lem:5-vertex-three-4-neigh}(b), and so $\mu^*(v) \geq 0$ after $v$ sends $\frac{1}{3}$ to each of its incident $3$-faces by R1, $\frac{1}{9}$ to its $3$-neighbour by R2, $\frac{1}{15}$ to each $v_1,v_3$ by R7(a) when they are $4$-vertices, at most  $\frac{2}{15}$ to $v_4$ by R7(a)-(b) when it is a $4$-vertex.

Suppose now that $v$ is a support vertex. Then $v_2$ is a bad or a semi-bad vertex, and so $v_2$ has two common neighbours with at least one of $v_1,v_3$. By Remark \ref{rem:specil vertices}(a)-(b), $v_2$ has at most one $4$-neighbour. This infer that $v$ has at most two $4$-neighbours. 
In particular, $v$ is incident to a $5^+$-face by Lemma \ref{lem:support-properties}(a). 
We distinguish the rest of the proof according to the number of $4$-neighbours of $v$.

Let $n_4(v)=0$. Then $v$ receives at least $\frac{1}{9}$ from its incident $5^+$-face by R5-R6, and so we have $\mu^*(v)\geq 0$  after $v$ sends $\frac{1}{3}$ to each of its incident $3$-faces by R1, $\frac{1}{3}$ to $v_2$ by R10, $\frac{1}{9}$ to its $3$-neighbour by R2.

Let $n_4(v)=1$.  Recall that $v$ is incident to at least one $5^+$-face. 
First, we suppose that $f_4,f_5$ are $4$-faces, which implies that  $f_3$ is a $5^+$-face. In this case, $v$ receives $\frac{1}{5}$ from $f_3$ by R5.  Thus we have  $\mu^*(v)\geq 0$  after $v$ sends $\frac{1}{3}$ to each of its incident $3$-faces by R1, $\frac{1}{3}$ to $v_2$ by R10, $\frac{1}{9}$ to its $3$-neighbour by R2, and $\frac{1}{15}$ to its $4$-neighbour by R7. We further suppose that  $f_4$ or $f_5$ is a $5^+$-face. It then follows from Lemma \ref{lem:support-properties}(b) that $v$ is incident to two $5^+$-faces. 
If $f_3$ is a $5^+$-face, then $v$ receives $\frac{1}{5}$ from $f_3$ by R5 and  $\frac{1}{9}$ from its other incident $5^+$-face by R6.  Thus we have  $\mu^*(v)\geq 0$  after $v$ sends $\frac{1}{3}$ to each of its incident $3$-faces by R1, $\frac{1}{3}$ to $v_2$ by R10, $\frac{1}{9}$ to its $3$-neighbour by R2, at most $\frac{2}{15}$ to its $4$-neighbour by R7. 
If $f_3$ is not a $5^+$-face, then $v$ receives at least $\frac{1}{9}$ from each of its incident $5^+$-face by R5-R6.  Thus we have  $\mu^*(v)\geq 0$  after $v$ sends $\frac{1}{3}$ to each of its incident $3$-faces by R1, $\frac{1}{3}$ to $v_2$ by R10, $\frac{1}{9}$ to its $3$-neighbour by R2, $\frac{1}{15}$ to its $4$-neighbour by R7(a).

Let $n_4(v)=2$. Recall that $v_2$ has at most one $4$-neighbour. So we may assume that $v_3,v_4$ (or $v_1,v_4$) are $4$-neighbours of $v$. It follows from Lemma \ref{lem:support-properties}(c) that  $v$ is incident to three $5^+$-faces. In this case, by R5-R6, $v$ receives at least $\frac{1}{9}$ from  incident $5^+$-face. Therefore we have  $\mu^*(v)\geq 0$  after $v$ sends $\frac{1}{3}$ to each of its incident $3$-faces by R1, $\frac{1}{3}$ to $v_2$ by R10, $\frac{1}{9}$ to its $3$-neighbour by R2, $\frac{1}{15}$ to $v_3$ by R7(a), and  $\frac{2}{15}$ to $v_4$ by R7(b).\medskip

\textbf{(3.2.2).} Let $ m_3(v)=3$. Denote by $f_1,f_2,\ldots,f_5$ the faces incident to $v$ in clockwise order on the plane.  By Lemma \ref{lem:faces-in-3-face} together with the fact $d(v_5)=3$, we say that the $3$-faces incident to $v$ are forming $f_1=v_1vv_2$, $f_2=v_2vv_3$, and $f_3=v_3vv_4$. Since $d(v_5)=3$, $v$ cannot be a good vertex by Lemma \ref{lem:good-properties}(a). Moreover, by Lemma \ref{lem:5-vertex-three-4-neigh}(c), $v$ has at most one $4$-neighbour, and at least one of $f_4, f_5$ is a $5^+$-face. Suppose first that $v$ has no $4$-neighbour.  By R6, $v$ receives $\frac{1}{9}$ from each of its incident $5^+$-face. Thus,  $\mu^*(v)\geq 0$ after $v$ sends $\frac{1}{3}$ to each of its incident $3$-faces by R1 and $\frac{1}{9}$ to its $3$-neighbour by R2.   
Suppose now that  $v$ has exactly one $4$-neighbour. In this case, $v$ is incident to two $5^+$-faces by Lemma \ref{lem:5-vertex-three-4-neigh}(d), so $f_4, f_5$ are $5^+$-faces. By R6, $v$ receives $\frac{1}{9}$ from each of its incident $5^+$-faces. Thus,  $\mu^*(v)\geq 0$ after $v$ sends $\frac{1}{3}$ to each of its incident $3$-faces by R1, $\frac{1}{9}$ to its $3$-neighbour by R2, and $\frac{1}{15}$ to its $4$-neighbour by R7(a). \medskip

\textbf{(3.3).} Let $n_3(v)=0$. Namely, all neighbours of $v$ are $4^+$-vertices. By Lemma \ref{lem:faces-on-5-vertex}, we have $m_3(v)\leq 4$. If $m_3(v)\leq 1$, then $\mu^*(v)\geq 0$ after $v$ sends $\frac{1}{3}$ to each of its incident $3$-faces by R1 and at most $\frac{2}{15}$ to each of its $4$-neighbours by R7. So we may assume that $2\leq m_3(v)\leq 4$. \medskip

\textbf{(3.3.1).} Let  $m_3(v)= 2$. Denote by $f_1,f_2$ the $3$-faces incident to $v$. Notice that $v$ is incident to three $5^+$-faces when all neighbours of  $v$ are $4$-vertices by Lemma \ref{lem:5-vertex-n3-0}(a). 
First of all, by R5, $v$ receives $\frac{1}{5}$ from  each of its incident $5^+$-faces. If $f_1$ and $f_2$ are non-adjacent, then $\mu^*(v)\geq 0$ after $v$ sends $\frac{1}{3}$ to each of its incident $3$-faces by R1, $\frac{1}{15}$ to each of its $4$-neighbours incident to $f_1$ or $f_2$ by R7(a), $\frac{2}{15}$ to its $4$-neighbour incident to neither $f_1$ nor $f_2$ by R7(b).  
We further assume that $f_1$ and $f_2$ are adjacent, say $f_1=v_1vv_2$ and $f_2=v_2vv_3$. 
If $v$ is not a support vertex, then $\mu^*(v)\geq 0$ after $v$ sends $\frac{1}{3}$ to each of its incident $3$-faces by R1, $\frac{1}{15}$ to each of its $4$-neighbours incident to $f_1$ or $f_2$ by R7(a), $\frac{2}{15}$ to each of its $4$-neighbours incident to neither $f_1$ nor $f_2$ by R7(b).  
We now assume that $v$ is a support vertex. Then $v_2$ is a bad or a semi-bad vertex, which implies that $v_2$ has at most one $4$-neighbour by Remark \ref{rem:specil vertices}. We then deduce that $v$ has at most three $4$-neighbours. Notice that the rule R7 cannot be applied for $v$ when it is not incident to any $5^+$-face. In fact, if $v$ is not incident to any $5^+$-face, then    $\mu^*(v)\geq 0$ after $v$ sends $\frac{1}{3}$ to each of its incident $3$-faces by R1, $\frac{1}{3}$ to $v_2$ by R10.
Therefore we suppose that $v$ is incident to a $5^+$-face.  By R5, $v$ receives $\frac{1}{5}$ from  each of its incident $5^+$-face. If $v$ is incident to only one $5^+$-face, then $\mu^*(v)\geq 0$ after $v$ sends $\frac{1}{3}$ to each of its incident $3$-faces by R1, $\frac{1}{3}$ to $v_2$ by R10, and $\frac{1}{15}$ to each of its $4$-neighbours by R7(a). If $v$ is incident to two $5^+$-faces, then $v$ receives totally at least $2\times \frac{1}{5}$ from its incident $5$-faces, and so $\mu^*(v)\geq 0$ after $v$ sends $\frac{1}{3}$ to each of its incident $3$-faces by R1, $\frac{1}{3}$ to $v_2$ by R10, and at most $\frac{2}{15}$ to each of its $4$-neighbours by R7. \medskip

\textbf{(3.3.2).} Let $m_3(v)=3$. Denote by $f_1,f_2,f_3$ the $3$-faces incident to $v$.
Notice that $v$ can have at most three $4$-neighbours by Lemma \ref{lem:5-vertex-n3-0}(b). 

Suppose first that $v$ is a strong vertex. Then we may assume that $f_1=v_1vv_2$, $f_2=v_2vv_3$, and $f_3=v_4vv_5$.  Recall that $v$ is incident to a $5^+$-face, and $v_2$ is a bad or semi-bad vertex. By R5, the vertex $v$ receives $\frac{1}{5}$ from each of its incident $5^+$-face.
If $v$ has two $4$-neighbours, then $v$ would have two $5^+$-faces by Lemma \ref{lem:strong-properties}(a). 
In this case, $v$ receives totally $2\times \frac{1}{5}$ from its incident $5^+$-faces. Thus, $\mu^*(v)\geq 0$ after $v$ sends $\frac{1}{3}$ to each of its incident $3$-faces by R1, at most $\frac{1}{5}$ to $v_2$ by R8, and $\frac{1}{15}$ to each of its $4$-neighbours by R7(a) (if exists). 
Assume further that $v$ has at most one $4$-neighbour. 
If $v_2$ has two common neighbours with only one of $v_1,v_3$, then $\mu^*(v)\geq 0$ after $v$ sends $\frac{1}{3}$ to each of its incident $3$-faces by R1, $\frac{2}{15}$ to $v_2$ by R8(a), and $\frac{1}{15}$ to its $4$-neighbour by R7(a) (if exists). 
If $v_2$ has two common neighbours with each of $v_1,v_3$, then either $v$ has no $4$-neighbour or $v$ is incident to  two $5^+$-face by Lemma \ref{lem:strong-properties}(b). For each case, we have  $\mu^*(v)\geq 0$ after $v$ sends $\frac{1}{3}$ to each of its incident $3$-faces by R1, $\frac{1}{5}$ to $v_2$ by R8(b), and $\frac{1}{15}$ to each of its $4$-neighbours by R7(a) (if exists).  


Next we suppose that $v$ is a good vertex. Then we have $f_1=v_1vv_2$, $f_2=v_2vv_3$, and $f_3=v_3vv_4$ such that $v_2$ is a semi-bad vertex, and each of $v_1v_2,v_2v_3$ is contained in two $3$-faces. Also, by Lemma \ref{lem:good-properties}(b)-(c), $v$ has at most one $4$-neighbour, and $v$ is incident to a $5^+$-face. 
If $v$ has a $4$-neighbour, then $f_4$ and	$f_5$ are $5^+$-faces by Lemma \ref{lem:good-properties}(d). It follows from Proposition \ref{prop:good-no-two-semi-bad} that only one of $v_2,v_3$ can be semi-bad vertex. By R5, the vertex $v$ receives $\frac{1}{5}$ from each $f_4,f_5$.  Thus $\mu^*(v)\geq 0$ after $v$ sends $\frac{1}{3}$ to its each incident $3$-face by R1, $\frac{1}{5}$ to $v_2$ by R9, at most $\frac{2}{15}$ to its $4$-neighbour by R7. 
Suppose next that $v$ has no $4$-neighbour. If $v_3$ is not a semi-bad vertex, 
then  $v$ receives $\frac{1}{5}$ from its incident $5^+$-face by R5, and sends   $\frac{1}{5}$ to $v_2$ by R9. Consequently,  $\mu^*(v)\geq 0$ after $v$ sends $\frac{1}{3}$ to its each incident $3$-face by R1.
If $v_3$ is a  semi-bad vertex, 
then  $v$ is incident to two $5^+$-faces by Lemma \ref{lem:good-properties}(e).  It follows from applying R5 that $v$ receives $\frac{1}{5}$ from each of its incident $5^+$-face, and sends   $\frac{1}{5}$ to each $v_2,v_3$ by R9. Consequently,  $\mu^*(v)\geq 0$ after $v$ sends $\frac{1}{3}$ to its each incident $3$-face by R1. \medskip

Finally, suppose that $v$ is neither a strong nor a good vertex.  Notice that the rule R7 cannot be applied for $v$ when it is not incident to any $5^+$-face. So, if none of $f_4,f_5$ is a $5^+$-face, then $\mu^*(v)\geq 0$ after $v$ sends $\frac{1}{3}$ to each of its incident $3$-faces by R1.
If only one of $f_4,f_5$ is a $5^+$-face, then  $v$ receives $\frac{1}{5}$ from its incident $5^+$-face by R5, and sends $\frac{1}{15}$ to each of its $4$-neighbours  by R7(a) (if exists). If both $f_4$ and $f_5$ are $5^+$-faces, then  $v$ receives $\frac{1}{5}$ from each of its incident $5$-faces by R5, and sends at most $\frac{2}{15}$ to each of its $4$-neighbours by R7 (if exists). Consequently,  $\mu^*(v)\geq 0$ after $v$ sends $\frac{1}{3}$ to each of its incident $3$-faces by R1. \medskip

\textbf{(3.3.3).} Let $m_3(v)=4$. Let $f_1,f_2,f_3,f_4$ be $3$-faces such that $f_i=v_ivv_{i+1}$ for $i\in [4]$. 
We first observe that if $v$ is not incident to any $5^+$-face, then $v$ cannot have any  $4$-neighbour by Lemma \ref{lem:5-vertex-n3-0-m3-4}(b). Moreover, $v$ cannot have more than one $4$-neighbour by Lemma \ref{lem:5-vertex-n3-0-m3-4}(a). The vertex $v$ is a bad or a semi-bad vertex, we distinguish two cases accordingly.\medskip

\textbf{(3.3.3.1).} Let $v$ be a bad vertex, i.e., $f_5$ is a $4$-face. By Remark \ref{rem:specil vertices}(a), all neighbours of $v$ are  $5$-vertices, also there is no edge $v_iv_{i+1}$ for $i\in [4]$ contained in two $3$-faces.
If one of $v_2,v_3,v_4$ is incident to only two $3$-faces, then it must be a support vertex, and so it gives $\frac{1}{3}$ to $v$ by R10. It follows that  $\mu^*(v)\geq 0$ after $v$ sends $\frac{1}{3}$ to each of its incident $3$-faces by R1. Therefore, we may assume that each of $v_2,v_3,v_4$ is incident to exactly three $3$-faces, since no edge $v_iv_{i+1}$ for $i\in [4]$ is contained in two $3$-faces. By Lemma \ref{lem:deger}(b), $v_3$ is a strong vertex. This particularly implies that $v_3$ is incident to a $5^+$-face $f_j$. 
We assume, without loss of generality, that  $v_2v_3$ is an edge  of $f_j$. We then conclude that both $v_2$ and $v_3$ are strong vertices. It follows that $v$ receives  $\frac{1}{5}$ from $v_3$ by R8(b), and $\frac{2}{15}$ from $v_2$ by R8(a).  Consequently,  $\mu^*(v)\geq 0$ after $v$ sends $\frac{1}{3}$ to each of its incident $3$-faces by R1. \medskip

\textbf{(3.3.3.2).}  Let $v$ be a semi-bad vertex, i.e., $f_5$ is a $5^+$-face. 
By R5, $v$ receives $\frac{1}{5}$ from $f_5$. 
In particular, $v_2,v_3,v_4$ are $5$-vertices by Lemma \ref{lem:deger}(a).

Suppose first that there is no edge $v_iv_{i+1}$ for $i\in [4]$ contained in two $3$-faces.  If one of $v_2,v_3,v_4$ is incident to only two $3$-faces, then it must be support vertex, and so it gives $\frac{1}{3}$ to $v$ by R10. Thus we have  $\mu^*(v)\geq 0$ after $v$ sends $\frac{1}{3}$ to each of its incident $3$-faces by R1. We therefore assume that each of $v_2,v_3,v_4$ is incident to exactly three $3$-faces, since no edge $v_iv_{i+1}$ for $i\in [4]$ is contained in two $3$-faces.
By Lemma \ref{lem:deger}(c), $v_3$ is a strong or a good vertex. However, the latter is not possible since no edge $v_iv_{i+1}$ for $i\in [4]$ is contained in two $3$-faces. So $v_3$ is a strong vertex, and it gives  $\frac{1}{5}$ to $v$ by R8(b). Recall that $v$ has already received $\frac{1}{5}$ from $f_5$. Consequently,  $\mu^*(v)\geq 0$ after $v$ sends $\frac{1}{3}$ to each of its incident $3$-faces by R1. 


We now suppose that there is an edge $v_iv_{i+1}$ for $i\in [4]$ contained in two $3$-faces. The uniqueness of $v_iv_{i+1}$ follows from Lemma \ref{lem:5-vertex-n3-0-m3-4}(d). 
First consider the case that $v_1v_2$ is contained in two $3$-faces. If  $v_3$ is incident to only two $3$-faces, then it must be support vertex by Lemma \ref{lem:deger}(c), and so it gives $\frac{1}{3}$ to $v$ by R10. Thus we have  $\mu^*(v)\geq 0$ after $v$ sends $\frac{1}{3}$ to each of its incident $3$-faces by R1. So we may assume that  $v_3$ is incident to exactly three $3$-faces by Lemma \ref{lem:5-vertex-n3-0-m3-4}(d). 
It follows from Lemma \ref{lem:deger}(c) that $v_3$ is a strong or a good vertex. The latter is not possible since $v_1v_2$ is contained in two $3$-faces, and only one of $v_iv_{i+1}$'s is contained in two $3$-faces Lemma \ref{lem:5-vertex-n3-0-m3-4}(d). Thus, $v_3$ is a strong vertex, and $v$ receives  $\frac{1}{5}$ from $v_3$ by R8. Consequently,  $\mu^*(v)\geq 0$ after $v$ sends $\frac{1}{3}$ to each of its incident $3$-faces by R1.
Let us now assume that $v_2v_3$ is contained in two $3$-faces. In this case, $v_3$ is a good vertex Lemma \ref{lem:deger}(c). It follows from applying R9 that $v$ receives $\frac{1}{5}$ from $v_3$. Consequently,  $\mu^*(v)\geq 0$ after $v$ sends $\frac{1}{3}$ to its each incident $3$-face by R1. \medskip

\section*{Acknowledgement}

The author extends thanks to Kengo Aoki, who pointed out some inconsistencies in an earlier version of this paper.






\end{document}